\numberwithin{equation}{section}
\newtheorem{thm}{Theorem}[section]
\newtheorem{thmint}{Theorem}
\newtheorem{lemma}[thm]{Lemma}
\newtheorem{prop}[thm]{Proposition}
\newtheorem{corol}[thm]{Corollary}
\newtheorem{remark}[thm]{Remark}
\newtheorem*{remark*}{Remark}
\newtheorem{dfn}[thm]{Definition}
\newtheorem{conjintro}[thmint]{Conjecture}
\newtheorem{propintro}[thmint]{Proposition}
\def\Z{\mathbb Z}
\def\Q{\mathbb Q}
\def\N{\mathbb N}
\def\e{\epsilon}
\def\d{\delta}
\DeclareMathSymbol{\minus} {\mathord}{operators}{"2D}
\DeclareMathOperator\M{M} 
\DeclareMathOperator\Mod{mod} 
\DeclareMathOperator\GL{GL}
  \DeclareMathOperator\dist{dist}
\DeclareMathOperator\Di{D}
\DeclareMathOperator\Or{O}
\newcommand{\rmnum}[1]{\romannumeral #1}
\newcommand{\Rmnum}[1]{\expandafter\@slowromancap\romannumeral #1@}
\begin{document}

\title[Hyperbolic hypergeometric monodromy groups]{Hyperbolic monodromy groups for the hypergeometric equation and Cartan involutions} 
\dedicatory{To Nicholas Katz with admiration}
\author[Fuchs]{Elena Fuchs}
\author[Meiri]{Chen Meiri}
\author[Sarnak]{Peter Sarnak}
\begin{abstract}
We give a criterion which ensures that a group
generated by Cartan involutions in the automorph group
of a rational quadratic form of signature $(n-1,1)$ is ``thin'',
namely it is of infinite index in the latter. It is based on
a graph defined on the integral Cartan root vectors,
as well as Vinberg's theory of hyperbolic reflection groups.
The criterion is shown to be robust for showing that many hyperbolic
hypergeometric groups for $_nF_{n-1}$ are thin.
\end{abstract}
\maketitle

\section{Introduction}

Let $\alpha,\beta\in \mathbb Q^n$ and consider the $_nF_{n-1}$ hypergeometric differential equation
\begin{equation}\label{deq}
Du=0
\end{equation}
where
\begin{equation}\label{dop}
D=(\theta+\beta_1-1)(\theta+\beta_2-1)\cdots(\theta+\beta_n-1)-z(\theta+\alpha_1)\cdots(\theta+\alpha_n)
\end{equation}
and $\theta=z\frac{d}{dz}$.

\noindent Assuming, as we do, that $0\leq\alpha_j<1$, $0\leq \beta_j<1$, and the $\alpha$'s and $\beta$'s are distinct, the $n$-functions
\begin{equation}\label{hypfunc}
z^{1-\beta_i} {_nF_{n-1}}(1+\alpha_1-\beta_i,\dots, 1+\alpha_n-\beta_i, 1+\beta_1-\beta_i,{\stackrel{\vee}{\dots}}, 1+\beta_n-\beta_i\, | \,z)
\end{equation}
where $\vee$ denotes omit $1+\beta_i-\beta_i$, are linearly independent solutions to (\ref{deq}).  Here $_nF_{n-1}$ is the hypergeometric function
$$_nF_{n-1}(\zeta_1,\dots,\zeta_n; \eta_1,\dots,\eta_{n-1}\, |\, z)=\sum_{k=0}^\infty \frac{(\zeta_1)_k\cdots(\zeta_n)_kz^k}{(\eta_1)_k\cdots(\eta_{n-1})_k k!}$$
and $(\eta)_k=\eta(\eta+1)\cdots(\eta+k-1)$.

Equation (\ref{deq}) is regular away from $\{0,1,\infty\}$ and its monodromy group $H(\alpha,\beta)$\footnote[1]{We assume throughout that $H(\alpha,\beta)$ is primitive -- see Section~\ref{beuksum}.} is generated by the local monodromies $A,B,C$ ($C=A^{-1}B$) gotten by analytic continuation of a basis of solutions along loops about $0,\infty$, and $1$ respectively, see \cite{bh} for a detailed description.  The local monodromies of equations that come from geometry are quasi-unipotent which is one reason for our restricting $\alpha$ and $\beta$ to be rational.  We restrict further to such $H(\alpha,\beta)$'s which after a suitable conjugation are contained in $\textrm{GL}_n(\mathbb Z)$.  According to \cite{bh}, this happens if the characteristic polynomials of $A$ and $B$, whose roots are $e^{2\pi i\alpha_j}$ and $e^{2\pi i\beta_k}$ respectively, are products of cyclotomic polynomials.  In particular for each $n\geq 2$ there are only finitely many such choices for the pair $\alpha,\beta$ in $\mathbb Q^n$.  \cite{bh} also determine the Zariski closure $G=G(\alpha,\beta)$ of $H(\alpha,\beta)$ explicitly in terms of $\alpha, \beta$.  Furthermore the integrality conditions that we are imposing imply that $H(\alpha,\beta)$ is self dual so that $G(\alpha,\beta)$ is either finite, $\textrm{Sp}(n)$ ($n$ even) or $\textrm{O}(n)$.  The signature of the quadratic form in the orthogonal case is determined by the relative locations of the roots $\alpha,\beta$ (see Section~\ref{beuksum}).

Our interest is whether $H(\alpha,\beta)$ is of finite or infinite index in $G(\mathbb Z)=G(\alpha,\beta)[\mathbb Z]$.  In the first case we say that $H(\alpha,\beta)$ is \emph{arithmetic} and in the second case that it is \emph{thin}.  This distinction is important in various associated number theoretic problems (see \cite{sa1}) and this paper is concerned with understanding which case happens and which is typical.  In a given example, if $H(\alpha,\beta)$ is arithmetic one can usually verify that it is so by producing generators of a finite index subgroup of $G(\mathbb Z)$, on the other hand if $H(\alpha,\beta)$ is thin then there is no general procedure to show that it is so.  Our main result is a robust certificate for showing that certain $H(\alpha,\beta)$'s are thin.

Until recently, other than the cases where $H(\alpha,\beta)$ (or equivalently $G(\alpha,\beta)$) is finite, there were few cases for which $H(\alpha,\beta)$ itself was known.  For $n=2$ it is well known that all the $H(\alpha,\beta)$'s are arithmetic and we show that the same is true for $n=3$.  For $n=4$ Brav and Thomas \cite{BT} showed very recently that the Dwork family \cite{Dw} $\alpha=(0,0,0,0), \beta=(\frac{1}{5},\frac{2}{5},\frac{3}{5},\frac{4}{5})$ as well as six other hypergeometrics with $G=\textrm{Sp}(4)$ which correspond to families of Calabi-Yau three-folds, are thin.  In fact they show that the generators $A$ and $C$ of the above $H(\alpha,\beta)$'s play generalized ping-pong on certain subsets of $\mathbb P^3$, from which they deduce that $H(\alpha,\beta)$ is a free product and hence by standard cohomological arguments that $H(\alpha,\beta)$ is thin.  On the other hand, Venkataramana shows in \cite{ve2} that for $n$ even and 
$$\alpha=\left(\frac{1}{2}+\frac{1}{n+1},\cdots,\frac{1}{2}+\frac{n}{n+1}\right), \quad \beta=\left(0,\frac{1}{2}+\frac{1}{n},\cdots,\frac{1}{2}+\frac{n-1}{n}\right),$$
$H(\alpha,\beta)$ is arithmetic (in $\textrm{Sp}(n,\mathbb Z)$).  In particular, there are infinitely many arithmetic $H(\alpha,\beta)$'s.  In \cite{SV} many more examples with $G=\textrm{Sp}(n)$ and for which $H(\alpha,\beta)$ is arithmetic are given. Another example for which $H$ can be shown to be thin is $\alpha=(0,0,0,\frac{1}{2})$, $\beta=(\frac{1}{4},\frac{1}{4},\frac{3}{4},\frac{3}{4})$, see \cite{fmsri}.  In this case $G(\mathbb R)$ is orthogonal and has signature $(2,2)$ and $G(\mathbb Z)$ splits as a product of $\textrm{SL}_2$'s.

All of our results are concerned with the case that $G(\alpha,\beta)$ is orthogonal and is of signature $(n-1,1)$ over $\mathbb R$.  We call these hyperbolic hypergeometric monodromy groups.  There is a unique (up to a scalar multiple) integral quadratic form $f$ for which $G(\mathbb Z)=\textrm{O}_f(\mathbb Z)$, or what is the same thing an integral quadratic lattice $L$ with $\textrm{O}(L):=G(\mathbb Z)$.  In Section~\ref{quadform} we determine a commensurable quadratic sublattice explicitly which facilitates many further calculations.  In this hyperbolic setting $G(\mathbb R)=\textrm{O}_f(\mathbb R)$ acts naturally as isometries of hyperbolic $n-1$-space $\mathbb H^{n-1}$ and we will use this geometry as a critical ingredient to provide a certificate for $H(\alpha,\beta)$ being thin.  Our first result is the determination of the $\alpha,\beta$'s for which $G(\alpha,\beta)$ is hyperbolic, see Theorem~\ref{theo}.  Firstly, these only occur if $n$ is odd and for $n>9$ they are completely described by seven infinite parametric families.  For $3\leq n\leq 9$ there are sporadic examples which are listed in Tables~2 and 3 of Section~\ref{numeric}.  Our determination of the seven families is based on a reduction to \cite{bh}'s list of families of $G(\alpha,\beta)$'s which are finite (i.e. those $G(\alpha,\beta)$'s for which $G=\textrm{O}(n)$ and have signature ($n,0$)).

For $n=3$, if $H(\alpha,\beta)$ is not finite then it is hyperbolic and as we noted all $6$ of these hyperbolic groups are arithmetic.  This is verified separately for each case, there being no difficulty in deciding whether a finitely generated subgroup of $\textrm{SL}_2(\mathbb R)$ is thin or not (the latter is a double cover of $\textrm{SO}(2,1))$, see the Appendix\footnote[2]{For $n\geq 5$ there is no general algorithm known to decide this.}.  For $n\geq 5$ the hyperbolic monodromies behave differently.  Our certificate of thinness applies in these cases and it is quite robust as exemplified by 

\begin{thmint}\label{theorem1}
The two families of hyperbolic monodromies $H(\alpha,\beta)$ with $n\geq 5$ and odd
\begin{itemize}
\item[(i)] $\alpha=\left(0,\frac{1}{n+1},\frac{2}{n+1},\cdots,\frac{n-1}{2(n+1)},\frac{n+3}{2(n+1)},\cdots,\frac{n}{n+1}\right), \beta=\left(\frac{1}{2},\frac{1}{n},\frac{2}{n},\cdots,\frac{n-1}{n}\right)$
\item[(ii)] $\alpha=\left(\frac{1}{2},\frac{1}{2n-2},\frac{3}{2n-2},\cdots,\frac{2n-3}{2n-2}\right), \beta=\left(0,0,0,\frac{1}{n-2},\frac{2}{n-2},\cdots,\frac{n-3}{n-2}\right)$
\end{itemize}
are thin.
\end{thmint}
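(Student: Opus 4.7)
The plan is to apply the paper's main thinness criterion (Theorem~\ref{sieve theorem}) to each of the two families in turn.

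\emph{Reduction to a reflection subgroup.} Since $G(\alpha,\beta)$ is orthogonal of signature $(n-1,1)$ and the monodromy $C = A^{-1}B$ at $z=1$ is a complex reflection, the Beukers--Heckman analysis in \cite{bh} identifies $C$ with an integral Cartan involution $r_v$ on the quadratic lattice $L$ determined in Section~\ref{quadform}. Since $A \in \GL_n(\Z)$ is semisimple with eigenvalues of finite order, $A$ itself has finite order $m$. The conjugates $C_k := A^k C A^{-k}$ are again Cartan involutions, and the normal subgroup $W \trianglelefteq H(\alpha,\beta)$ that they generate satisfies $[H(\alpha,\beta) : W] \le m$. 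Proving Theorem~\ref{theorem1} therefore reduces to showing that $W$ has infinite index in $\Or_f(\Z)$.

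\emph{Computing the Cartan data.} Using the model of the integral lattice $L$ from Section~\ref{quadform}, I would compute, for each of the families (i) and (ii), the finite collection of integral Cartan root vectors $\{v_k\}$ together with their pairwise inner products. These data are explicit functions of $n$: the eigenvalues of $A$ are the prescribed roots of unity, and the action of $A$ on $L$ is dictated by the cyclotomic pattern of $\alpha$ and $\beta$.

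\emph{Applying the graph criterion.} The sieve theorem translates ``$W$ has infinite index in $\Or_f(\Z)$'' into a combinatorial condition on the inner-product graph $\Gamma$ whose vertices are the $v_k$. By Vinberg's algorithm for hyperbolic Coxeter polyhedra, this condition is equivalent to showing that the mirrors $\{v_k^\perp\}$ cannot form the walls of a finite-volume acute-angled polyhedron in $\mathbb{H}^{n-1}$; equivalently, one exhibits a $W$-invariant region in $\mathbb{H}^{n-1}$ of infinite volume.

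\emph{The main obstacle} is a uniform verification across the two infinite families parameterised by odd $n \ge 5$. I would look for a fixed small subconfiguration of Cartan roots, present in both families independently of $n$, whose mere existence already defeats Vinberg's finite-covolume condition: for instance, three roots generating a rank-$3$ sublattice on which the induced reflections form a non-discrete triangle group, or a pair of roots whose orthogonal span contains an isotropic lattice vector producing a cusp of infinite hyperbolic width. The robustness of the criterion, emphasised in the abstract, makes it plausible that such a configuration can be isolated once and for all; the remaining effort is then a cyclotomic-arithmetic computation in the eigenbases of $A$, carried out uniformly in $n$.
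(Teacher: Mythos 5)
Your opening reduction is essentially the paper's: one passes to the normal subgroup generated by the conjugates of the Cartan involution $-C$ under powers of the finite-order generator (the paper uses $B^kCB^{-k}$; your $A^kCA^{-k}$ also works here since the $\alpha_j$ are distinct), and it suffices to show this subgroup has infinite index in $\textrm{O}_f(\mathbb Z)$. The gap is in everything after that. The root vectors $v_k$ of these involutions satisfy $(v_k,v_k)=-2$, so each $r_{v_k}$ induces on $\mathbb H^{n-1}$ an inversion in a \emph{point}, not a reflection in a hyperplane: $v_k^\perp$ does not meet $\mathbb H^{n-1}$, there are no ``mirrors'' or ``walls,'' and Vinberg's finite-covolume criterion for reflection polyhedra simply does not apply (the paper says explicitly that for groups generated by Cartan involutions there is no such canonical polyhedral domain). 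Your proposed obstructions also fail on their own terms: a ``non-discrete triangle group'' cannot occur inside the discrete group $\textrm{O}_f(\mathbb Z)$, and even for genuine reflections, showing that the \emph{generating} mirrors do not bound a finite-volume polyhedron would not show the group has infinite index.

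The actual certificate is algebraic, not geometric, and none of its ingredients appear in your sketch. The key identity (Lemma~\ref{basic}) is that if $(u,u)=(w,w)=-2$ and $(u,w)=-3$ then $u-w$ and $u-2w$ are $+2$-roots and $r_ur_w=r_{u-w}r_{u-2w}\in R_2(L)$, the Vinberg--Nikulin reflective subgroup. Hence if all the vectors $B^kv$ lie in one connected component of the ``minimal distance graph'' (edges where $(u,w)=-3$, or $-4$ in the odd case), the image of $H_r$ in $\textrm{O}(L)/R_2(L)$ has order at most $2$ (Proposition~\ref{lemma min dist graph}, Lemma~\ref{lemma v-Bv}). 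For the two families in Theorem~\ref{theorem1} this connectivity is immediate from the explicit Gram matrix: $(v,Bv)=-3$ (Corollary~\ref{corol form 1} for family (i); the analogous computation via Proposition~\ref{prop form}(4) for family (ii)). Thinness then requires the external input that $|\textrm{O}(L)/R_2(L)|=\infty$, which the paper gets from Vinberg for large $n$ and from Nikulin's classification (checked against the invariant factors of $L$) for the remaining small odd $n$. Without the factorization identity, the passage to the quotient by $R_2(L)$, and the Vinberg--Nikulin input, the proposal does not constitute a proof.
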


In particular infinitely many of the $H(\alpha,\beta)$'s are thin and as far as we know these give the first examples in the general monodromy group setting of thin monodromy groups for which $G$ is high dimensional and simple.

\begin{remark*}
The normalized $_nF_{n-1}$'s corresponding to (i) and (ii) above (see \cite{vill} for the normalization) are
$$\sum_{k=0}^\infty \frac{(2k!)^2(nk)!}{((n+1)k)!(k!)^3}z^k \quad\mbox{and}\quad \sum_{k=0}^\infty\frac{((2n-2)k)!(2k)!}{(k!)^3((n-1)k)!((n-2)k)!}z^k$$
respectively.  The second has integral coefficients while the first does not, hence this arithmetic feature of $F$ is not reflected in the arithmeticity of the corresponding $H$ (See the end of Section~\ref{familyclassification} for more about the integrality of the coefficients).

\end{remark*}

Our certificate for thinness applies to a number of the families and many of the sporadic examples.  The full lists that we can handle thus far are recorded in Theorems~\ref{thinfamilythm} and \ref{secondthin} of Section~\ref{mindistgraph} as well as Table~\ref{table3} of Section~\ref{numeric}.  There remain some families for which the method does not apply (at least not directly).  In any case we are led to

\begin{conjintro}\label{allthinconj}
All but finitely many hyperbolic hypergeometric $H(\alpha,\beta)$'s are thin.\footnote[3]{It is quite possible that these $H(\alpha,\beta)$'s are all thin for $n\geq 5$.}
\end{conjintro}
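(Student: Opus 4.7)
The plan is to reduce the conjecture to finitely many families via the classification in Theorem~\ref{theo} and then to certify thinness uniformly within each family using a (possibly strengthened) version of the Cartan root graph criterion together with Vinberg's theory. Since the conjecture allows finitely many exceptions, the sporadic hyperbolic examples for $3\le n\le 9$ (Tables~2 and 3) can be set aside at the outset; what must be handled are all but finitely many members of each of the seven infinite parametric families appearing for $n>9$.

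The first step is to realize each family concretely. Using the explicit commensurable quadratic lattice $L$ from Section~\ref{quadform}, I would write $H(\alpha,\beta)\subset \Or_f(\Z)$ for an integral form $f$ of signature $(n-1,1)$, and identify the Cartan involutions associated to the local monodromies $A$, $B$, and $C=A^{-1}B$. In the hyperbolic setting these involutions are reflections in $\Or_f(\R)\curvearrowright \mathbb{H}^{n-1}$, so $H(\alpha,\beta)$ is contained in a reflection subgroup $\Ref(L)\subset \Or_f(\Z)$ generated by a collection of integral root vectors. The second step is to build the graph on these Cartan root vectors as in Section~\ref{mindistgraph} and apply the criterion of Theorems~\ref{thinfamilythm} and \ref{secondthin}, which, combined with Vinberg's algorithm for producing a fundamental polytope of $\Ref(L)$, certifies that $\Ref(L)$ (hence $H(\alpha,\beta)$) is of infinite index in $\Or_f(\Z)$ whenever the polytope has too many walls for the lattice to be reflective. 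For families where the criterion already applies this is immediate; the remaining families require enlarging the root set, for instance by adjoining further integral roots in a common orthogonal complement, or by passing to an appropriate finite-index supergroup commensurable with $H(\alpha,\beta)$, in order to drive Vinberg's process past its termination bound.

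The third step addresses the families for which no such enlargement makes the graph criterion succeed. Here the plan is to combine two auxiliary tools. First, a hyperbolic ping-pong argument in $\mathbb{H}^{n-1}$, in the spirit of Brav--Thomas but adapted to Cartan reflections, where one exhibits disjoint attracting/repelling half-space configurations for $A$ and $C$, producing a free product decomposition of $H(\alpha,\beta)$ that, by standard cohomological dimension arguments, forbids finite index in $\Or_f(\Z)$ once $n\ge 5$. Second, growth/volume estimates for the orbit of a fixed point under $H(\alpha,\beta)$ compared with the volume growth of $\Or_f(\Z)\backslash\mathbb{H}^{n-1}$; a co-infinite volume orbit certifies thinness geometrically. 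Either tool, applied parametrically along a family, should cover all but finitely many members.

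The main obstacle will be the families for which the Cartan graph fails the criterion and for which $\Ref(L)$ is in fact cocompact or cofinite in $\Or_f(\Z)$, so that no reflection enlargement can rescue the argument. In these cases $H(\alpha,\beta)$ must be certified thin by properly non-reflective means, and the ping-pong step is the least routine: one must locate, within a uniform family, half-space data that depend polynomially on the family parameter, and verify the disjointness inequalities uniformly. Establishing such uniform ping-pong — or, failing that, a uniform growth-rate gap — is where the real work lies, and is the step I expect to resist a purely combinatorial treatment.
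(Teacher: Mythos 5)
The statement you are trying to prove is labelled a \emph{Conjecture} in the paper, and the paper does not prove it: the authors explicitly note that ``there remain some families for which the method does not apply,'' and they only observe that the conjecture \emph{would} follow if a (currently unproven) strengthening of Vinberg--Nikulin held, namely that the full Weyl group $W(f)$ generated by \emph{all} reflections in $\mathrm{O}(L)$ — those inducing hyperbolic reflections \emph{and} those inducing Cartan involutions — has infinite index in $\mathrm{O}(L)$ for all but finitely many forms. Your proposal is therefore not being measured against an existing proof; it is a research program, and as written it contains genuine gaps at exactly the points you flag as ``where the real work lies.'' Concretely: (i) your framing that $H(\alpha,\beta)$ ``is contained in a reflection subgroup $\mathrm{R}(L)$'' misstates the mechanism of the paper's certificate. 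The involution $C$ induces a \emph{Cartan involution} (inversion in a point of $\mathbb{H}^{n-1}$), not a hyperbolic reflection, and the paper emphasizes that groups generated by Cartan involutions admit no canonical polyhedral fundamental domain. The actual certificate is that if the Cartan roots generating $H_r$ lie in one component of the minimal distance graph $X_f$, then the \emph{image} of $H_r$ in $\mathrm{O}(L)/R_k(L)$ ($k=2$ or $4$) is finite, and one then invokes Nikulin/Vinberg to see that this quotient is infinite. Vinberg's algorithm does not ``certify that $R(L)$ is of infinite index whenever the polytope has too many walls''; the infinite-index statements come from Vinberg's $n\geq 30$ theorem and Nikulin's classification, not from running the algorithm on $H$.

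(ii) The families on which the criterion fails (e.g.\ $\mathcal N_4(j,k,n)$ and the $\mathcal M_3$ cases, where the relevant components of $X_f$ appear to be singletons, and where $[H:H_r]=\infty$) are precisely those for which no enlargement of the root set within the graph framework is known to help, and your fallback tools are not established: a ping-pong/free-product decomposition for groups generated by Cartan involutions in $\mathrm{O}(n-1,1)$ with $n\geq 5$ has not been exhibited, and the growth-count comparison of Section~\ref{numeric1} is, as the paper itself stresses, only a one-sided numerical heuristic (it gives a lower bound for $N_H(T)$ and cannot certify thinness). Since neither the uniform ping-pong inequalities nor a rigorous growth-gap argument is supplied, the proposal does not close the conjecture; it reproduces the paper's partial results (Theorems~\ref{thinfamilythm} and \ref{secondthin}) and leaves open exactly the cases the paper leaves open.
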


\vspace{0.3in}

We turn to a brief description of our methods.  In general if there is an epimorphism $\psi:G(\mathbb Z)\rightarrow K$ with $\psi(H(\alpha,\beta))$ of infinite index in $K$, then clearly $H(\alpha,\beta)$ must be of infinite index in $G(\mathbb Z)$.\footnote[4]{That useful such $\psi$'s exist in this hyperbolic setting is demonstrated below.  If $G(\alpha,\beta)$ is of real rank two or bigger, there are no useful $\psi$'s by Margulis's normal subgroup theorem \cite{mar}.}  In principle, $H^1(G(\mathbb Z))$ could be infinite which would imply that $H(\alpha,\beta)$ is thin since the latter is essentially generated by involutions, however we know very little about these cohomology groups for the full $\textrm{O}_f(\mathbb Z)$.  Instead we use variations of the quotient $\psi:\textrm{O}_f(\mathbb Z)\rightarrow \textrm{O}_f(\mathbb Z)/R_f=:K_f$ where $R_f$ is the Vinberg reflective subgroup of $\textrm{O}_f(\mathbb Z)$.  Namely, $R_f$ is the group generated by the elements of $\textrm{O}_f(\mathbb Z)$ which induce reflections of hyperbolic space $\mathbb H^{n-1}$ (that is the induced action on one of the two sheeted hyperboloids in $n$-space, see Section~\ref{cartaninvol}).  Vinberg \cite{vin} and Nikulin \cite{nik} have shown that except for rare cases $K$ is infinite, see Section~\ref{cartaninvol} for a review of their results which we use.  To apply this we need to understand the image of $H(\alpha,\beta)$ in $K$.  The key observation is that the element $C$ in $H(\alpha,\beta)$, which is a linear reflection of $n$-space, induces a Cartan involution of $\mathbb H^{n-1}$, that is it is an isometry of $\mathbb H^{n-1}$ which is an inversion in a point $p\in \mathbb H^{n-1}$.  The reflection subgroup $H_r(\alpha,\beta)$ of $H(\alpha,\beta)$ is the group generated by the Cartan involutions $B^kCB^{-k}$, $k\in\mathbb Z$ and $H/H_r$ is cyclic.  Thus essentially up to commensurability the question of whether $H(\alpha,\beta)$ is thin is a special case of deciding whether a subgroup $\Delta$ of $\textrm{O}_f(\mathbb Z)$ generated by Cartan involutions is thin or not.  We approach this by examining the image of such a $\Delta$ in $K_f$ (when the latter is infinite).  At this point the study is about $\textrm{O}_f(\mathbb Z)$, $K_f$, and a general such $\Delta$ (and $n$ needn't be odd).

We define a graph $X_f$, the ``distance graph,'' associated with root vectors of $f$ which is central to the analysis.  We  assume that $f$ is even (that is $f(x)\in 2\mathbb Z$ for $x\in L$) and for $k=2$ or $-2$ let $V_k(L)=\{v\in L\, |\, f(v)=k\}$ be the corresponding root vectors (in the case that $f(x,v)\in 2\mathbb Z$ for all $x,v\in \mathbb L$ which comes up in some cases we also allow $k$ to be $\pm 4$).  The root vectors define linear reflections lying in $\textrm{O}_f(\mathbb Z)$ given by $r_v:x\mapsto x-\frac{2f(x,v)}{f(v)}v$.  By our choice of the signature of $f$, for $v\in V_{-2}(L)$ the map $r_v$ induces a Cartan involution on $\mathbb H^{n-1}$ while for $v\in V_{2}(L)$ it induces a hyperbolic reflection on $\mathbb H^{n-1}$.  Assume that the Cartan involutions generating $\Delta$ come from root vectors in $V_{-2}(L)$.  $X_f$ has for its vertices the set $V_{-2}(L)$ and we join $v$ to $w$ if $f(v,w)=-3$, this corresponds to $v$ and $w$ having the smallest distance allowed by discreteness, as points in $\mathbb H^{n-1}$.  The graph $X_f$ is a disjoint union of its connected components $\Sigma_\alpha$ and these satisfy (see Section~\ref{graphprop} for a detailed statement)

\begin{propintro}
\begin{itemize}
\item[(i)] The components $\Sigma_{\alpha}$ consist of finitely many isomorphism types.
\item[[(ii)] Each type is either a singleton or an infinite homogeneous graph corresponding to a transitive isometric action of a Coxeter group with finite stabilizer.
\item[(iii)] If $\Sigma_{\alpha}$ is a connected component of $X_f$, then the group generated by the corresponding Cartan roots, $R_{-2}(\Sigma_{\alpha})=\langle r_v\; |\; v\in \Sigma_{\alpha}\rangle$ is up to index $2$ contained in the reflection group $R_2(L)=\{r_v\; |\; v\in V_2(L)\}$.
\end{itemize}
\end{propintro}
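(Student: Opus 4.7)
The linchpin is an algebraic identity relating edges of $X_f$ to $(+2)$-roots of $L$. If $v,w\in V_{-2}(L)$ satisfy $f(v,w)=-3$, set $u:=v-w$. Then $f(u)=f(v)-2f(v,w)+f(w)=-2+6-2=2$, so $u\in V_2(L)$; and $f(w,u)=f(w,v)-f(w,w)=-3+2=-1$, so $r_u(w)=w-f(w,u)\,u=w+u=v$. Consequently
$$r_u\,r_w\,r_u = r_{r_u(w)} = r_v,$$
expressing every Cartan involution at an endpoint of an edge as the conjugate (by an element of $R_2(L)$) of the Cartan involution at the other endpoint. This identity drives the proof.

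\emph{Proof of (iii).} Fix a basepoint $w_0\in\Sigma_\alpha$. For any $v\in\Sigma_\alpha$, iterating the identity along a path $w_0=v_0,v_1,\dots,v_k=v$ with edge vectors $u_i:=v_i-v_{i-1}\in V_2(L)$ yields $r_v=g_v\,r_{w_0}\,g_v^{-1}$, where $g_v:=r_{u_k}\cdots r_{u_1}\in R_2(L)$. Let $\chi\colon R_{-2}(\Sigma_\alpha)\to\{\pm1\}$ be the determinant character, so $\chi(r_v)=-1$ for every generator. Any element of $\ker\chi$ is a product of an even number of $r_v$'s, which by inserting $r_{w_0}^2=1$ can be rewritten as a product of terms $r_v r_{w_0}=g_v r_{w_0} g_v^{-1} r_{w_0}=g_v\cdot(r_{w_0} g_v^{-1} r_{w_0})$. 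Since conjugation by any isometry sends $V_2(L)$ to itself, $r_{w_0}$ normalizes $R_2(L)$, so $r_{w_0} g_v^{-1} r_{w_0}\in R_2(L)$ and hence $r_v r_{w_0}\in R_2(L)$. Therefore $\ker\chi\subseteq R_2(L)$, an index-at-most-$2$ subgroup of $R_{-2}(\Sigma_\alpha)$, proving (iii).

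\emph{Proof of (i) and (ii).} The group $O_f(\mathbb Z)$ acts on $X_f$ by graph automorphisms (it preserves $V_{-2}(L)$ and the pairing), and classical finiteness for representations by integral quadratic forms gives finitely many orbits on $V_{-2}(L)$. Two components meeting the same orbit are isomorphic via the transporting isometry, proving (i). For (ii), set $R:=\langle r_{v'-w'}\mid v'\sim w'\text{ in }\Sigma_\alpha\rangle\subseteq R_2(L)$. Each generator swaps its edge's endpoints (same computation as above), so walking along paths shows that $R$ acts transitively on $\Sigma_\alpha$. As a subgroup of the discrete group $O_f(\mathbb Z)$ generated by reflections, $R$ is a Coxeter group by Vinberg's theory, and the stabilizer of any vertex is finite, being a discrete subgroup of the compact stabilizer of the corresponding point in $\mathbb{H}^{n-1}$.

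The main obstacle is the remaining claim that a non-singleton component is infinite. Given an edge $v\sim w$, set $w':=3v-w$; one checks $f(w')=9f(v)-6f(v,w)+f(w)=-2$ and $f(v,w')=3f(v)-f(v,w)=-3$, so $w'$ is another vertex adjacent to $v$, distinct from $w$. With $u:=v-w$ and $u':=v-w'=w-2v$, both in $V_2(L)$, the computation $f(u,u')=-3$ gives $|f(u,u')|/\sqrt{f(u)f(u')}=3/2>1$: the reflecting hyperplanes of $r_u$ and $r_{u'}$ are ultraparallel in $\mathbb{H}^{n-1}$, so their product is a hyperbolic translation of infinite order. This forces $R$ to be infinite, whence $\Sigma_\alpha=R\cdot v$ is an infinite homogeneous graph. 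This final step is the only place where the signature $(n-1,1)$ hypothesis is used in an essential way; all other parts of the proof are formal consequences of the edge-gives-$(+2)$-root identity and the fact that the determinant furnishes a sign character on $R_{-2}(\Sigma_\alpha)$.
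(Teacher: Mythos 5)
Your proof is correct, and its skeleton is the same as the paper's: the diophantine identity converting an edge $(v,w)=-3$ of $X_f$ into $2$-roots (your $u=v-w$, the paper's Lemma~\ref{basic}), the finiteness of $O(L)$-orbits on $V_{-2}(L)$ for part (i), and a transitive action on each component by a reflection group generated by the edge vectors (the paper's Proposition~\ref{prop fund dom of ref gp}). Two points of genuine divergence are worth recording. First, for (iii) the paper telescopes $r_{w_1}r_{w_m}=\prod(r_{w_i}r_{w_{i+1}})$ and uses the explicit factorization $r_ur_w=r_{u-w}r_{u-2w}$ to land each factor in $R_2(L)$; you instead use the conjugation form $r_v=r_{v-w}\,r_w\,r_{v-w}$ together with the determinant character and the normality of $R_2(L)$ in $O(L)$. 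The two are equivalent (your identity plus $r_wr_{v-w}r_w=r_{v-2w}$ recovers the paper's factorization), but your version isolates why the index-$2$ statement is forced, namely that the obstruction is exactly the sign character. Second, and more substantively, you supply an explicit proof that a non-singleton component is infinite: from one edge $v\sim w$ you manufacture a second neighbor $w'=3v-w$ and check that the two edge roots $u,u'$ satisfy $|(u,u')|/\sqrt{(u,u)(u',u')}=3/2>1$, so $r_ur_{u'}$ is a hyperbolic translation (trace $7$ on the span) of infinite order; combined with finiteness of the vertex stabilizer this gives infiniteness of the orbit. The paper asserts this dichotomy in the introduction but does not spell out this step in Section~\ref{graphprop}, so your argument is a welcome completion; it is also the only place the hyperbolic signature is used, as you note.
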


With this the certificate for showing that $\Delta$ is thin is clear: according to Vinberg and Nikulin, $O(L)/R_2(L)$ is infinite except in rare cases (and Nikulin has a classification of thee), hence if (iii) is satisfied for the generators of $\Delta$ then $\Delta$ must be thin.  The calculations connected with the minimal distance graph can be carried out effectively in general and even explicitly for some of our families.  The process in the general case invokes an algorithm for computing the fundamental polyhedral cell for a discrete group of motions of hyperbolic space which is generated by a finite number of reflections.  Such an algorithm is provided in Section~\ref{thinguys}\footnote[5]{It differs from Vinberg's algorithm \cite{vin}, which assumes that one has an apriori list of all the hyperbolic reflections in the group.}.

The above leads to the cases discussed in Sections~\ref{mindistgraph} and \ref{numeric} for which $H(\alpha,\beta)$ is shown to be thin.  To end we note that it is possible that a much stronger version of Vinberg's theorem in the following form is valid: For all but finituely many rational quadratic forms $f$ (at least if $n$ is large enough) the full ``Weyl subgroup'' $W(f)$ generated by \emph{all} reflections in $\textrm{O}(L)$ (that is, both those inducing hyperbolic reflections and Cartan involutions on $\mathbb H^{n-1}$) is infinite index in $\textrm{O}_f(L)$ (see Nikulin \cite{nik2}).  If this is true then Conjecture~\ref{allthinconj} would follow easily from our discussion.  In Section~\ref{cartaninvol} we give an example in dimension $4$ with $R_2(L)$ being thin and $R_{-2}(L)$ arithmetic, so there is no general commensurability between these groups.  Finally, we note that when our analysis of the thinness of $H$ succeeds, it comes with a description of $H$ as a subgroup (up to commensurability) of a geometrically finite subgroup of $R_2(L)$ and this opens the door to determine the group structure of $H$ itself.  We leave this for the future.

\section{Hyperbolic hypergeometric monodromy groups: preliminaries}
\subsection{Setup of the problem}\label{beuksum}

We begin by reviewing \cite{bh} which forms the basis of our analysis.  The setup and notation is as in the Introduction.  The starting point for studying $H(\alpha,\beta)$ is the following theorem of Levelt.
\begin{thm}[\cite{levelt}]\label{levthm} For $1\leq i\leq n$ let $\alpha_i$ and $\beta_i$ be as above.  Define the complex numbers $A_1,\dots,A_n, B_1,\dots, B_n$ to be the coefficients of the polynomials
\begin{equation*}
P(z):=\prod_{j=1}^n (z-e^{2\pi i\alpha_j})=z^n+A_1z^{n-1}+\cdots+ A_n \quad {\mbox and } \quad Q(z):=\prod_{j=1}^n (z-e^{2\pi i\beta_j})=z^n+B_1z^{n-1}+\cdots+ B_n.
\end{equation*}
Then $H(\alpha,\beta)$ is the group generated by
\begin{equation*}\label{agens}\small{
A=\left(
\begin{array}{lllll}
0&0&\cdots&0&\minus A_n\\
1&0&\cdots&0&\minus A_{n-1}\\
0&1&\cdots&0&\vdots\\
0&0&\ddots&0&\minus A_2\\
0&0&\cdots&1&\minus A_1\\
\end{array}
\right),\quad
B=\left(
\begin{array}{lllll}
0&0&\cdots&0&\minus B_n\\
1&0&\cdots&0&\minus B_{n-1}\\
0&1&\cdots&0&\vdots\\
0&0&\ddots&0&\minus B_2\\
0&0&\cdots&1&\minus B_1\\
\end{array}
\right).}
\end{equation*}
\end{thm}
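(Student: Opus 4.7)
The plan is to combine an explicit computation of the local monodromies of the hypergeometric equation at the three singular points with a rigidity theorem saying that this local data determines the global representation up to simultaneous conjugation.

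First, I would pin down the local monodromies. The indicial equation of $D$ at $z=0$ has exponents $1-\beta_j$, so the local monodromy around $0$ has eigenvalues $e^{-2\pi i\beta_j}$; an analogous computation at $\infty$ (via $z\mapsto 1/z$) yields eigenvalues $e^{2\pi i\alpha_j}$. At $z=1$ a direct examination of the leading symbol of $D$ shows that $n-1$ of the local exponents are integers and only one is non-integer, so the local monodromy $C$ around $1$ is a pseudo-reflection: its fixed subspace is a hyperplane and $C-I$ has rank exactly one. After using the relation $ABC=I$ on a small counterclockwise loop and absorbing inverses into a choice of basis, these three facts become the assertion that $A$ and $B$ have characteristic polynomials $P$ and $Q$ respectively and that $A^{-1}B$ is a pseudo-reflection.

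Next, I would verify that the matrices displayed in the theorem realize precisely these local data. By construction $A$ and $B$ are companion matrices for $P$ and $Q$, so they have the required characteristic polynomials. Moreover $A$ and $B$ differ only in their last columns, so $B-A$ has rank at most one, and therefore $A^{-1}B = I + A^{-1}(B-A)$ is automatically a pseudo-reflection.

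The final step, which is the main obstacle, is the rigidity statement: any pair of invertible $n\times n$ matrices $(A',B')$ satisfying these three conditions is unique up to simultaneous $\mathrm{GL}_n(\mathbb{C})$ conjugation, provided the primitivity hypothesis that $P$ and $Q$ share no common root holds. I would approach this by a dimension count on the variety of such pairs, matching it with the dimension of a single $\mathrm{GL}_n$-conjugation orbit; the relevant computation says that the conjugacy classes of $A$, $B$, and the pseudo-reflection $A^{-1}B$ have dimensions summing to exactly $2\dim\mathrm{GL}_n - 2$, which is the numerical signature of a rigid triple. Primitivity enters precisely to preclude proper $\langle A', B'\rangle$-invariant subspaces, which would otherwise shrink the stabilizer and allow non-conjugate pairs with identical local data; alternatively one can induct on $n$, peeling off the unique line on which $C$ acts nontrivially. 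Once this rigidity is in hand, the hypergeometric local monodromies and the explicit companion pair generate simultaneously conjugate groups, proving that $H(\alpha,\beta)$ coincides with $\langle A,B\rangle$ up to this conjugation.
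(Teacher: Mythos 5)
The paper does not prove this statement; it is quoted from Levelt's thesis (via Beukers--Heckman), so there is no internal proof to compare against. Levelt's actual argument is elementary linear algebra rather than rigidity: since $A^{-1}B$ is a pseudo-reflection, $A-B$ has rank one, so $\ker(A-B)=\ker\varphi$ for a functional $\varphi$; one picks $e_1\ne 0$ with $\varphi(A^je_1)=0$ for $0\le j\le n-2$ (that is $n-1$ linear conditions in an $n$-dimensional space), uses irreducibility to show $e_1,Ae_1,\dots,A^{n-1}e_1$ is a basis, observes that $A$ and $B$ agree on all but the last basis vector, and reads off the companion form. Your route through Katz-style rigidity is a genuinely different approach and can in principle be made to work, but as written it has real gaps.

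First, the count $\sum_i\dim\mathcal C_i=2n^2-2$ is only the numerical criterion for rigidity; by itself it shows the moduli space of triples has expected dimension zero, not that it is a single $\mathrm{GL}_n$-orbit. Promoting this to uniqueness is precisely the content of Katz's rigidity theorem (or of a direct argument like Levelt's), and a dimension match cannot substitute for it. Second, rigidity requires fixing the conjugacy classes of all three local monodromies, but the eigenvalues at $0$ and $\infty$ do not determine those classes when exponents repeat --- and repeated exponents occur throughout this paper (e.g.\ $\beta=(0,0,0,\dots)$ in the families $\mathcal M_2$, $\mathcal M_3$). One must show the local monodromies are regular (a single Jordan block per eigenvalue); this can be extracted from irreducibility together with the pseudo-reflection condition via the Euler-characteristic inequality $\sum_i\dim Z(g_i)\le n^2+2$, but it is a step you have skipped. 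Third, having $n-1$ integer exponents at $z=1$ does not by itself make $C$ a pseudo-reflection, since integer exponents can still force logarithmic solutions; you need the separate fact that the equation admits $n-1$ independent holomorphic solutions at $z=1$. Finally, the hypothesis you invoke --- that $P$ and $Q$ share no common root, i.e.\ $\alpha_i\ne\beta_j$ --- is the criterion for irreducibility, not primitivity (the paper reserves ``primitive'' for the absence of a system of imprimitivity), and you still owe an argument that the explicit companion pair itself generates an irreducible group, since rigidity only identifies irreducible realizations of the prescribed class data with one another.
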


Note that $H(\alpha,\beta)$ can be conjugated into $\textrm{GL}_n(\mathbb Z)$ if and only if the polynomials $P(z)$ and $Q(z)$ above factor as cyclotomic polynomials, and so the roots of $P(z)$ and $Q(z)$ are reciprocal, meaning they are left invariant under the map $z\rightarrow z^{-1}$.  Since we are interested in integral monodromy groups, we will assume this throughout the article. Also, given a group $H(\alpha, \beta)$ as above, one obtains another hypergeometric group $H(\alpha',\beta')$ by taking $0\leq\alpha',\beta'<1$ to be $\alpha+d$ and $\beta+d$ modulo $1$, respectively.  The group $H(\alpha',\beta')$ is called a \emph{scalar shift} of $H(\alpha,\beta)$, and as pointed out in Remark~5.6 in \cite{bh}, we have that $H(\alpha,\beta)\cong H(\alpha',\beta')$, up to a finite center.  Thus when classifying all possible pairs $(\alpha,\beta)$ such that $H(\alpha,\beta)$ is integral and fixes a quadratic form of signature $(n-1,1)$ in Section~\ref{familyclassification} we will do so up to scalar shift.  Note that for any given $n$ there are only finitely many such groups $H(\alpha,\beta)$.

We also assume that $H$ is \emph{irreducible}, meaning that it fixes no proper subspace of $\mathbb C^n$, and \emph{primitive}, meaning that there is no direct sum decomposition $\mathbb C^n = V_1\oplus V_2\oplus\cdots\oplus V_k$ with $k>1$ and $\dim(V_i)\geq 1$ for all $1\leq i\leq k$ such that $H$ simply permutes the spaces $V_i$.  Finally, we define the reflection subgroup $H_r$ of $H$ to be the group generated by the reflections $\{A^kBA^{-k}\; |k\in \mathbb Z\}$.  By Theorem~5.3 of \cite{bh} we have that the primitivity of $H$ implies the irreducibility of $H_r$.
 With the notation above, Beukers-Heckman show in \cite{bh} that $H=H(\alpha,\beta)$ falls into one of three categories.  Given a hypergeometric monodromy group $H(\alpha,\beta)$, let
\begin{equation}\label{cratio}
c_{\alpha,\beta}:= A_n/B_n
\end{equation}
 where $A_n$ and $B_n$ are as in Theorem~\ref{levthm}.  Note that in the cases we consider $c_{\alpha,\beta}=\pm 1$.  Then $H(\alpha,\beta)$ belongs to one of the following categories.
\begin{itemize}
\item[(0)] A finite group (Beukers-Heckman list such cases completely in \cite{bh}, and we summarize these cases in Theorem~\ref{finiteH} below)
\item[(1)] If $n$ is even, $H$ is infinite, and $c_{\alpha,\beta}=1$ then $H\subset \textrm{Sp}_{n}(\mathbb Z)$ and $\textrm{Zcl}(H)=\textrm{Sp}_n(\mathbb C)$
\item[(2)] If $n$ is odd and $H$ is infinite; or if $n$ is even, $H$ is infinite, and $c_{\alpha,\beta}=-1$, then $H\subset \textrm{O}_{f_{\alpha,\beta}}(\mathbb Z)$ for some rational, unique up to scalar multiple quadratic form $f=f_{\alpha,\beta}$ in $n$ variables and $\textrm{Zcl}(H)=\textrm{O}_f(\mathbb C)$
\end{itemize}
As noted in the introduction, in this article we study hypergeometric groups $H$ which fall into category (2) and such that $H$ fixes a quadratic form of signature $(n-1,1)$.  We should mention that Beukers-Heckman show that the signature of $f_{\alpha,\beta}$ in category (2) is given by $(p,q)$ where $p+q=n$ and 
\begin{equation}\label{sig}
|p-q|=\left|\sum_{j=1}^n (-1)^{j+m_j}\right|
\end{equation}
where $m_j=|\{k\; |\; \beta_k<\alpha_j\}|$ and the $\alpha_i$ and $\beta_i$ are ordered as described in the introduction: $0\leq\alpha_j<1$, $0\leq \beta_j<1$. 

Although we concern ourselves with the case where $H$ is infinite, irreducible, and primitive, we state below Beukers-Heckman's classification of \emph{all} possible finite groups $H(\alpha,\beta)$.  In Section~\ref{familyclassification} we will need this classification to classify all of the primitive, irreducible hypergeometric groups $H(\alpha,\beta)$ which we consider -- i.e. integral orthogonal of signature $(n-1,1)$.  The precise statement of the theorem is taken from \cite{ms}.

\begin{thm}[\cite{bh},\cite{ms}]\label{finiteH} Let $\alpha=(\alpha_1,\dots,\alpha_n)\in \mathbb Q^n$ and $\beta=(\beta_1,\dots, \beta_n)\in\mathbb Q^n$ and let $H(\alpha,\beta)$ be as before.  Define $a_i:=e^{2\pi i \alpha_i}$ and $b_i=e^{2\pi i \beta_i}$ and let $\mathbf a:=\{a_1,\dots,a_n\}$ and $\mathbf b:=\{b_1,\dots, b_n\}$.  Let $P(z),Q(z)$ corresponding to $H(\alpha,\beta)$ be as in Theorem~\ref{levthm}.  If $H(\alpha,\beta)$ is finite then the corresponding polynomials $P(z),Q(z)$ are as in one of the cases below.

\noindent (1) The case where $H_r$ is primitive: in this case the corresponding polynomials $P(z)$ and $Q(z)$ belong to one of two infinite families or to one of $26$ sporadic examples.  One such infinite family corresponds to
$$P(z)=\frac{z^{n+1}-1}{z-1},\quad Q(z)=\frac{(z^j-1)(z^{n+1-j}-1)}{z-1}$$
where $n\geq 1$, $1\leq j\leq (n+1)/2$, $\textrm{gcd}(j,n+1)=1$.  The other infinite family is obtained by replacing $z$ with $-z$ above. 

\noindent (2) The case where $H_r$ acts reducibly on $\mathbb C^n$: in this case Theorem~5.3 of \cite{bh} implies that $H_r$ is primitive, and we have that there is some primitive $\ell$th root of unity $\zeta$ with $\ell>1$ such that $\zeta\mathbf a=\mathbf b$.   Then $(\alpha,\beta)$ gives the pair $P(z^{\ell}), Q(z^\ell)$ where $P$ and $Q$ are as in case (1) or correspond to one of the sporadic examples mentioned there.

\noindent (3) The case where $H$ is imprimitive and $H_r$ is irreducible: in this case the corresponding polynomials $P(z)$ and $Q(z)$ belong to one of two infinite families.  One such family is
$$P(z)=z^n+1,\quad Q(z)=(z^j-1)(z^{n-j}+1)$$
where $n\geq 3$, $1\leq j\leq n$, and $\textrm{gcd}(j,2n)=1$.  The other such family is obtained by replacing $z$ with $-z$ above.
\end{thm}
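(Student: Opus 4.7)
The plan is to reduce the classification to the Shephard--Todd classification of finite complex reflection groups and then carry out a case analysis dictated by the primitivity and reducibility of $H_r$. First I would observe that the generator $C=A^{-1}B$ is a complex pseudo-reflection: writing $A$ and $B$ as the companion matrices in Theorem~\ref{levthm}, the difference $A-B$ has only its last column nonzero, so $I - C = A^{-1}(A-B)$ has rank one. Consequently, $H_r=\langle A^k C A^{-k} : k\in\mathbb{Z}\rangle$ is generated by pseudo-reflections, and if $H$ (hence $H_r$) is finite it is a finite complex reflection group. The whole group $H=\langle A,H_r\rangle$ then sits inside the normalizer of $H_r$ and contains the cyclic extension by the companion matrix $A$.

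Next I would split into the three cases appearing in the statement. In case (1), when $H_r$ is both irreducible and primitive, Shephard--Todd's list shows that in dimension $n$ the only possibilities are the primitive exceptional groups (which give the $26$ sporadic examples) together with the symmetric group $S_{n+1}$ acting via its standard representation on $\mathbb{C}^n$. For the latter, the eigenvalues of the Coxeter element give precisely $P(z)=(z^{n+1}-1)/(z-1)$, and the eigenvalues of $B=AC$ are forced by Levelt's rigidity to be the roots of $(z^j-1)(z^{n+1-j}-1)/(z-1)$ for some $j$ coprime to $n+1$; the alternative family is obtained by the scalar shift $z\mapsto -z$. In case (3), where $H$ is imprimitive with $H_r$ irreducible, the Shephard--Todd classification of imprimitive groups forces $H$ to lie in the wreath-product series $G(m,p,n)$. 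Matching the characteristic polynomials of $A$ and $B$ against the eigenvalue distribution in $G(m,p,n)$ singles out $P(z)=z^n+1$ together with $Q(z)=(z^j-1)(z^{n-j}+1)$, again with the $\pm z$ sister family.

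Case (2) requires a separate argument: here I would use Clifford theory to say that, since $H_r$ is normal in $H$ (as it is generated by the $A$-conjugates of $C$), any irreducible constituent of $H_r$ on $\mathbb{C}^n$ is permuted by $A$. If $H_r$ acts reducibly on $\mathbb{C}^n$ while the ambient $H$ is irreducible, then $A$ cyclically permutes $\ell>1$ isotypic summands of the same dimension, which by Theorem~5.3 of \cite{bh} forces primitivity of $H_r$ on each summand. Comparing the eigenvalues of $A$ on the induced representation with those on a single summand then yields the relation $\zeta\mathbf{a}=\mathbf{b}$ for a primitive $\ell$th root of unity $\zeta$, and the factorizations $P(z^\ell),Q(z^\ell)$ fall out of the restricted representation being itself of hypergeometric type as in (1).

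The main obstacle I expect is not the infinite-family part but the bookkeeping for the sporadic list in case (1): one must traverse the exceptional primitive Shephard--Todd groups dimension by dimension, identify the pairs $(A,C)$ inside each such group where $A$ has cyclic action with roots-of-unity eigenvalues and $C$ is a reflection, and verify in each instance that Levelt rigidity produces admissible $P(z),Q(z)$. Getting exactly $26$ examples rather than more or fewer requires checking that scalar shifts, Galois conjugation on the eigenvalues, and the symmetry $(\alpha,\beta)\leftrightarrow(\beta,\alpha)$ do not produce duplicates, and that each exceptional group genuinely admits a compatible companion element $A$. The remaining steps, by contrast, reduce to direct verifications with the companion matrices.
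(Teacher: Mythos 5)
The paper itself offers no proof of this theorem: it is imported wholesale from Beukers--Heckman, with the precise formulation taken from McKee--Smyth, so there is no in-paper argument to compare against. Your strategy --- observe that $C=A^{-1}B$ is a pseudo-reflection, hence $H_r$ is a finite complex reflection group, and then run the Shephard--Todd classification through the trichotomy ($H_r$ primitive / $H_r$ reducible / $H$ imprimitive with $H_r$ irreducible) --- is exactly the strategy of the original Beukers--Heckman proof, so the skeleton is right.

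As a proof, however, the sketch has concrete gaps. In case (1) you need the normalizer computation $N_{\mathrm{GL}_n(\mathbb C)}(S_{n+1})=S_{n+1}\times\mathbb C^\times$ to write $A=c\sigma$ with $c$ a root of unity, and then it is \emph{irreducibility} of the pair (the condition $\alpha_i\neq\beta_j$, equivalently that $P$ and $Q$ share no root), not ``Levelt rigidity,'' that forces $\sigma$ to be an $(n+1)$-cycle and $\sigma\tau$ a product of two cycles of coprime lengths $j$ and $n+1-j$; Levelt's theorem takes the eigenvalues as input and outputs uniqueness of the pair, so it cannot constrain $Q$. Moreover, without the paper's standing hypothesis that $P$ and $Q$ are products of cyclotomic polynomials (self-duality), every root-of-unity scalar shift $c$ yields a finite group, so you would get infinitely many infinite families rather than the two listed; your argument nowhere uses this hypothesis to pin $c$ down to $\pm 1$. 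In case (3) the step ``matching eigenvalue distributions in $G(m,p,n)$'' hides the essential point, namely that imprimitivity together with irreducibility of $H_r$ forces the blocks to be one-dimensional and cyclically permuted by $A$, so that $A^n$ is scalar and $P(z)=z^n\pm 1$. Finally, the count of $26$ sporadic examples is asserted rather than derived; you correctly identify this as the heavy bookkeeping, but for $n=2$ the primitive groups $G_4$--$G_{22}$ also enter, which your ``only $S_{n+1}$ plus exceptionals'' dichotomy elides.
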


\subsection{Almost interlacing cyclotomic sequences}\label{interlace} In this section we classify all almost interlacing cyclotomic sequences (see Definition~\ref{almostinterlacing}).  This classification will be used in the next section to classify all hyperbolic hypergeometric monodromies in dimension $n>9$.
\begin{dfn} Two sequences $0 \le \alpha_1<\ldots < \alpha_n <1$ and $0 \le \beta_1<\ldots < \beta_n <1$ are called interlacing cyclotomic sequences if the following three conditions hold:
\begin{itemize}
\item[1.] $\alpha_i \ne \beta_j$ for every $1 \le i,j \le n$.
\item[2.] $\prod_{1 \le j \le n}(t-e^{2\pi i \alpha_j})$ and $\prod_{1 \le j \le n}(t-e^{2\pi  i \beta_j})$ are products of cyclotomic polynomials.
\item[3.] $\alpha_1<\beta_1<\alpha_2<\cdots<\alpha_n<\beta_n$ or $\beta_1<\alpha_1<\beta_2\cdots<\beta_n<\alpha_n$.
\end{itemize}
\end{dfn}
Define $r_i:=|\{j \mid \beta_j <\alpha_i\}|$ and $s_i:=|\{j \mid \alpha_j <\beta_i\}|$. 
Condition 3 in the above definition is equivalent to the following condition:
\begin{itemize}
\item[$3^*$.] $|\sum_{i=1}^n (-1)^{i+r_i}|=|\sum_{i=1}^n (-1)^{i+s_i}|=n$.
\end{itemize}

\begin{dfn}\label{almostinterlacing} Two sequences $0\le \alpha_1<\ldots < \alpha_n <1$ and $0\le \beta_1<\ldots < \beta_n <1$ are called almost interlacing cyclotomic sequences if the following three conditions hold:
\begin{itemize}
\item[1.] $\alpha_i \ne \beta_j$ for every $1 \le i,j \le n$.
\item[2.] $\prod_{1 \le j \le n}(t-e^{2\pi i \alpha_j})$ and $\prod_{1 \le j \le n}(t-e^{2\pi i \beta_j})$ are products of cyclotomic polynomials.
\item[3.] $|\sum_{i=1}^n (-1)^{i+r_i}|=|\sum_{i=1}^n (-1)^{i+s_i}|=n-2$.
\end{itemize}
\end{dfn}
Note that, by the expression for the signature in (\ref{sig}), it is precisely when $\alpha$ and $\beta$ are almost interlacing that $H(\alpha,\beta)$ is hyperbolic (signature $(n-1,1)$).  Beukers and Hekman classify all interlacing cyclotomic sequences in \cite{bh}.  We use their classification in order to classify all almost interlacing cyclotomic sequences. We start with the following lemma:
\begin{lemma}\label{interlacingtypes} Let $(\alpha_i)_{1 \le i \le n}$ and $(\beta_i)_{1 \le i \le n}$ be almost interlacing cyclotomic sequences with $n \ge 7$. Then $n$ is odd and $\alpha_{\frac{n+1}{2}}=\frac{1}{2}$ and $\beta_1=0$ or $\beta_{\frac{n+1}{2}}=\frac{1}{2}$ and $\alpha_0=0$. Moreover, if the first possibility happens then
one of the following 4 options holds:
\begin{itemize}
\item[(1)] The sequences $(c_i)_{1 \le i \le n}$ and $(d_i)_{1 \le i \le n}$ 
are interlacing cyclotomic sequences where:
$$\left\{\begin{array}{cc}
c_1 = \beta_1& \\
c_i = \alpha_{i-1}& 1 < i \le m \\
c_i = \alpha_i & m < i \le n \\
d_i = \beta_{i+1} & 1 \le i < m \\
d_m = \alpha_m & \\
d_i = \beta_i & m < i \le n
\end{array}\right.$$
\item[(2)] The sequences $(c_i)_{1 \le i \le n-1}$ and $(d_i)_{1 \le i \le n-1}$ 
are interlacing cyclotomic sequences where:
$$\left\{\begin{array}{cc}
c_1 = \beta_1 & \\
c_i = \alpha_{i-1}& 1 < i < m \\
c_i = \alpha_{i+1} & m \le i \le n-1 \\
d_i = \beta_{i+1} & 1 \le i \le n-1
\end{array}\right.$$
\item[(3)] The sequences $(c_i)_{1 \le i \le n-1}$ and $(d_i)_{1 \le i \le n-1}$ 
are interlacing cyclotomic sequences where:
$$\left\{\begin{array}{cc}
c_i = \alpha_i & 1 \le i < m\\
c_i = \alpha_{i+1}& m \le i \le n-1 \\
d_i = \beta_{i+2} & 1 \le i < m \\
d_m=\alpha_m& \\
d_i = \beta_{i+1} & m < i \le n-1
\end{array}\right.$$
\item[(4)] The sequences $(c_i)_{1 \le i \le n-2}$ and $(d_i)_{1 \le i \le n-2}$ 
are interlacing cyclotomic sequences where:
$$\left\{\begin{array}{cc}
c_i = \alpha_i & 1 \le i < m \\
c_i = \alpha_{i+2}& m \le i \le n-2 \\
d_i = \beta_{i+2} & 1 \le i \le n-2
\end{array}\right.$$
\end{itemize}
\end{lemma}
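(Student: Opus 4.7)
The plan is to reduce the almost-interlacing condition to a combinatorial classification by exploiting the reciprocal symmetry forced by the cyclotomic condition, and then to peel off the two ``defects'' in the interlacing pattern to land on a genuine interlacing pair of shorter length.

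\textbf{Combinatorial interpretation of the sums.} First I would interpret $\sum_i (-1)^{i+r_i}$ in terms of the merged sequence of $\alpha$'s and $\beta$'s on $[0,1)$. Writing a word in the alphabet $\{A,B\}$ indicating whether each successive entry is an $\alpha$ or a $\beta$, perfect interlacing corresponds to the alternating word (and gives $|\sum|=n$), while each place where two consecutive letters agree (a block $AA$ or $BB$) perturbs the sum. The condition $|\sum_i(-1)^{i+r_i}|=|\sum_i(-1)^{i+s_i}|=n-2$ then forces the word to contain exactly one $AA$-block and one $BB$-block, with no further irregularities; the matching of both sums, and not just one, is what rules out other defect counts such as two $AA$-blocks paired with two $BB$-blocks placed farther apart.

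\textbf{Cyclotomic symmetry.} Because $\prod_j(t-e^{2\pi i\alpha_j})$ is a product of cyclotomic polynomials, its root set is closed under $z\mapsto z^{-1}$, equivalently the multiset $\{\alpha_j\}$ is invariant under $\alpha\mapsto 1-\alpha \pmod 1$, with fixed points only $0$ and $\tfrac12$. Thus reading the merged $A/B$-word ``backwards about $\tfrac12$'' reproduces it, modulo careful handling of the two possible fixed points. The same holds for $\beta$. I would then use this symmetry to force the unique $AA$-block and unique $BB$-block to sit in positions that are mirror images about $\tfrac12$, and hence to sit adjacent to a central fixed point.

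\textbf{Parity and location of $0$ and $\tfrac12$.} Combining the symmetry with the fact that the two sequences have equal length and alternate in the interlacing pattern, I would argue that $n$ must be odd (the only way the symmetric word with two defects can exist is with a central symbol fixed by the involution), and that the center must be occupied by $\tfrac12$ in one sequence while $0$ must appear in the other. After relabeling, this yields $\alpha_{(n+1)/2}=\tfrac12$ and $\beta_1=0$, or the other way around.

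\textbf{Reduction to interlacing sequences.} Once the defect pattern is located symmetrically around $\alpha_{(n+1)/2}=\tfrac12$, the four enumerated cases (1)--(4) correspond to the four possible local shapes at the central defects: whether the pair of anomalies consists of an $\alpha\alpha$-block flanking the central $\tfrac12$, a $\beta\beta$-block, or a mixed configuration, and how these interact with the existing entry $\alpha_m=\tfrac12$. In each case I would define $(c_i),(d_i)$ by removing one or two entries and, in options (1) and (3), substituting $\alpha_m$ in place of a $\beta$ to restore alternation. Then I would verify:
\begin{itemize}
\item Condition 1 (disjointness) is inherited.
\item Condition 2 (cyclotomic factorization of the product) is preserved because removed pairs are themselves reciprocal-closed, and $\tfrac12,0$ are roots of cyclotomics.
\item Condition $3^*$ holds by direct count: we have removed exactly the two defect contributions, so $|\sum(-1)^{i+r_i}|$ jumps from $n-2$ to the new length.
\end{itemize}

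\textbf{Main obstacle.} The principal difficulty will be the bookkeeping for the symmetric defect placement and the enumeration of cases: rigorously showing that no other arrangement of the $AA$- and $BB$-blocks is compatible with both cyclotomic symmetries simultaneously, and that each compatible arrangement falls into exactly one of the four options (1)--(4). The hypothesis $n\ge 7$ is what ensures that the defect blocks have enough room to sit symmetrically about the center $\tfrac12$ without collapsing onto the boundary $0$ or colliding with each other in degenerate ways that would fall outside the four templates.
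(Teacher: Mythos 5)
Your overall architecture coincides with the paper's: use the reciprocal symmetry $\alpha\mapsto 1-\alpha$ of cyclotomic root multisets to force the interlacing defects either to the boundary (producing a $0$) or to a centre fixed by the involution (producing $n$ odd and a central $\tfrac12$), then enumerate the resulting orderings and strip the defects off to obtain a shorter interlacing pair. The paper implements the first step through the signs $\epsilon_i=(-1)^{|\{j\,:\,\beta_j\in(\alpha_i,\alpha_{i+1})\}|}$, showing that $\{i:\epsilon_i=1\}$ must be $\{1\}$, $\{1,2\}$, $\{n-1\}$, $\{n-2,n-1\}$ or an adjacent interior pair $\{k-1,k\}$, which is exactly your ``defect'' analysis in different clothing.

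There is, however, a genuine flaw in your combinatorial step that would corrupt the case enumeration. First, the two conditions $|\sum_i(-1)^{i+r_i}|=n-2$ and $|\sum_i(-1)^{i+s_i}|=n-2$ are automatically equivalent: $i+r_i$ is the rank of $\alpha_i$ in the merged sequence and $i+s_i$ the rank of $\beta_i$, so the two sums total $\sum_{p=1}^{2n}(-1)^p=0$ and are negatives of one another. Hence ``the matching of both sums'' rules out nothing; what actually eliminates the stray defect placements is the reflection symmetry, not the second sum. Second, the condition does \emph{not} force ``exactly one $AA$-block and one $BB$-block'': the correct invariant is that all but one of the $\alpha$'s occupy merged positions of the same parity, and the configurations this permits include triple runs. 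Indeed, in the first of the four orderings the merged sequence reads $0=\beta_1<\beta_2<\alpha_1<\cdots<\beta_m<\alpha_{m-1}<\alpha_m=\tfrac12<\alpha_{m+1}<\beta_{m+1}<\cdots$, i.e.\ a run of three consecutive $\alpha$'s about $\tfrac12$, while the third and fourth orderings begin with $\beta_1=\beta_2=\beta_3=0$, a triple $\beta$-run (so your word model must also tolerate repeated entries and handle ties with multiplicity). Executed literally, your ``one $AA$, one $BB$'' template would miss several of the four options of the lemma; you need to redo the defect classification in terms of position parities (or the $\epsilon_i$'s) before the symmetry and enumeration steps go through.
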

\begin{proof} The proof is divided into easy steps:
\begin{itemize}
\item[(a)] The sequences $(\alpha_i)_{1 \le i \le n}$ and $(\beta_i)_{1 \le i \le n}$ are cyclotomic so for every $t\in(0,1)$,
$$|\{j \mid \alpha_j=t\}|=|\{j \mid \alpha_j=1-t\}|$$ and $$|\{j \mid \beta_j=t\}|=|\{j \mid \beta_j=1-t\}|.$$
\item[(b)] For every $1 \le i \le n-1$ denote $\e_i:=(-1)^{|\{j \mid \beta_j\in (\alpha_i,\alpha_{i+1})\}|}$ and $\d_i:=(-1)^{|\{j \mid \alpha_j\in (\beta_i,\beta_{i+1})\}|}$. The equality 
$|\sum_{i=1}^n (-1)^{i+r_i}|=n-2$ implies that exactly
one of the following possibilities holds:
\begin{itemize}
\item[(\rmnum{1})] $\{i \mid \e_i=1\}=\{1\}$
\item[(\rmnum{2})] $\{i \mid \e_i=1\}=\{1,2\}$
\item[(\rmnum{3})] $\{i \mid \e_i=1\}=\{n-1\}$
\item[(\rmnum{4})] $\{i \mid \e_i=1\}=\{n-2,n-1\}$
\item[(\rmnum{5})] $\{i \mid \e_i=1\}=\{k-1,k\}$ for some $3 \le k \le n-2$.
\end{itemize} 
\item[(c)] The symmetry of step $(a)$ shows that options 
(\rmnum{3}) and (\rmnum{4}) are not possible and that:
\begin{itemize}
\item[$\circ$] If option 
(\rmnum{1}) happens then $\alpha_1=0$.
\item[$\circ$] If option 
(\rmnum{2}) happens then $\alpha_1=\alpha_2=0$. 
\item[$\circ$] If option (\rmnum{3})
happens then $n$ is odd, $k=\frac{n+1}{2}$ and $\alpha_k=\frac{1}{2}$. 
\end{itemize}
\item[(d)] Denote $m:=\frac{n+1}{2}$. Changing the roles of the $\alpha_i$'s and the $\beta_j$'s we 
see that $n$ is odd and that either $\alpha_m=\frac{1}{2}$ and $\beta_1=0$ or $\alpha_m=0$ and $\beta_1=\frac{1}{2}$.
\item[(e)]From now on we will assume that $\alpha_m=\frac{1}{2}$ so $\{i \mid \e_i=1\}=\{\frac{n-1}{2},\frac{n+1}{2}\}$ and
$\beta_1=0$.
\item[(f)] Since there are exactly $n$ $\beta_j$'s the assumptions of step (e) imply:
$$|\{j \mid \beta_j\in (\alpha_i,\alpha_{i+1})\}|=\left\{\begin{array}{cc}
0 & i= m-1 \text{ or } i=m\\
1 & \text{otherwise} 
\end{array}\right..$$ Furthermore, 
the symmetry of step (a) implies that if $\beta_2=0$ then also $\beta_3=0$ while if $\beta_2 \ne 0$ then $\beta_n=1-\beta_2>\alpha_n$.

\item[(g)] The symmetry of step (a) implies that either $a_{m-1}<a_m<a_{m+1}$ or $a_{m-1}=a_m=a_{m+1}$. Thus, there are only 4 options:
\begin{itemize}
\item[(1)] $$0 = \beta_1 < \beta_2 < \alpha_1 < \beta_3 < \alpha_2 < \cdots < \beta_m < \alpha_{m-1}<$$
$$  \alpha_m =\frac{1}{2}<
\alpha_{m+1} < \beta_{m+1} < \alpha_{m+2} < \cdots < \beta_{n-1} < \alpha_n < \beta_n$$
\item[(2)] $$0 = \beta_1 < \beta_2 < \alpha_1 < \beta_3 < \alpha_2 < \cdots < \beta_{m} < \alpha_{m-1}=$$
$$  \alpha_m =\frac{1}{2}=
\alpha_{m+1} < \beta_{m+1} < \alpha_{m+2} < \cdots < \beta_{n-1} < \alpha_n < \beta_n$$
\item[(3)] $$0 = \beta_1 = \beta_2=\beta_3 < \alpha_1 < \beta_4 < \alpha_2 < \cdots < \beta_{m+1} < \alpha_{m-1}<$$
$$  \alpha_m =\frac{1}{2}<
\alpha_{m+1} < \beta_{m+2} < \alpha_{m+2} < \cdots < \beta_{n} < \alpha_n$$
\item[(4)] $$0 = \beta_1 = \beta_2=\beta_3 < \alpha_1 < \beta_4 < \alpha_2 < \cdots < \beta_{m+1} < \alpha_{m-1}=$$
$$  \alpha_m =\frac{1}{2}=
\alpha_{m+1} < \beta_{m+2} < \alpha_{m+2} < \cdots < \beta_{n} < \alpha_n$$
\end{itemize}
\end{itemize}
\end{proof}

\subsection{Infinite families of hyperbolic monodromy groups}\label{familyclassification}

In this section we combine Lemma~\ref{interlacingtypes} with the classification of finite hypergeometric monodromy groups in Theorem~\ref{finiteH} to produce seven infinite families of primitive integer hypergeometric groups which fix a quadratic form of signature $(n-1,1)$.  Each family is either a two or three parameter family.  As will be clear in the proof of the following theorem, the $n$-dimensional two parameter families listed below are derived from the roots of 
\begin{equation}\label{polystate0}
P_m(z)= z^m+1 \quad \mbox{ and } \quad Q_m(z)= (z^j-1)(z^{m-j}+1),
\end{equation}
where $m$ is taken to be $n-2,n-1,$ or $n$, while the $n$-dimensional three parameter families are derived from the roots of
\begin{equation}\label{polystate}
P_{m,k}(z)= \frac{z^{\ell(k+1)}-1}{z^{\ell}-1} \quad \mbox{ and } \quad Q_{m,k}(z)= \frac{(z^{\ell j}-1)(z^{\ell(k+1-j)}-1)}{z^{\ell}-1}
\end{equation}
where $\ell k=m$ and $m$ is taken to be $n-2,n-1,$ or $n$.  These families describe all primitive hypergeometric hyperbolic monodromy groups in dimension $n>9$ up to scalar shift.  In addition to groups in these families, there are several sporadic groups in dimensions $n\leq 9$ which are listed in Table~\ref{table3} of Section~\ref{sporadic} along with all primitive hyperbolic $H(\alpha,\beta)$ in dimension $n\leq 9$.

\begin{thm}\label{theo}
Let $n\geq 1$ be odd and let $P_m,Q_m,P_{m,k},Q_{m,k}$ be as defined in (\ref{polystate0}) and (\ref{polystate}).  Let $\alpha=\{\alpha_1,\dots,\alpha_n\}$ and $\beta=\{\beta_1,\dots,\beta_n\}$ where $\alpha_i, \beta_i\in \mathbb Q$ for $1\leq i\leq n$ and let $H(\alpha,\beta)$ be as in Theorem~\ref{levthm}.  If $(\alpha,\beta)$ belongs to one of the following families or is a scalar shift of a pair in one of these families, then $H(\alpha,\beta)\subset\textrm{O}(n-1,1)$.  If $n>9$, this list of families completely describes (up to scalar shift) the groups $H(\alpha,\beta)\subset\textrm{O}_f(\mathbb Z)$ where $f$ is a quadratic form in $n$ variables of signature $(n-1,1)$.

\begin{itemize}
\item[1)] $\mathcal M_1(j,n)$:
\begin{align*}
\hspace{-1.1in} {\scriptstyle \alpha}&= {\scriptstyle \left(0, \frac{1}{2n}, \frac{3}{2n}, \dots, \frac{n-1}{2n}, \frac{n+1}{2n},\dots, \frac{2n-3}{2n}, \frac{2n-1}{2n}\right),}\\
\hspace{-1.1in} {\scriptstyle \beta}&= {\scriptstyle \left(\frac{1}{j},\dots, \frac{j-1}{j},\frac{1}{2}, \frac{1}{2n-2j}, \frac{3}{2n-2j}, \dots, \frac{2n-2j-3}{2n-2j}, \frac{2n-2j-1}{2n-2j}\right)}\nonumber
\end{align*}
where $0< j< n$ is an odd integer.
\item[2)] $\mathcal M_2(j,n)$:
\begin{align*}
{\scriptstyle \alpha}&={\scriptstyle \left(\frac{1}{2n-2}, \frac{3}{2n-2}, \dots, \frac{1}{2},\dots, \frac{2n-5}{2n-2}, \frac{2n-3}{2n-2}\right)},\\
{\scriptstyle \beta}&= {\scriptstyle \left(0,0,0, \frac{1}{j},\dots, \frac{j-1}{j}, \frac{1}{2n-2j-2}, \frac{3}{2n-2j-2}, \dots, \frac{n-j-3}{2n-2j-2}, \frac{n-j+1}{2n-2j-2}, \dots, \frac{2n-2j-5}{2n-2j-2}, \frac{2n-2j-3}{2n-2j-2}\right)}\nonumber
\end{align*}
where $0<j<n$ is an integer and $j/(n,j)$ is odd.
\item[3)] $\mathcal M_3(j,n)$:
\begin{align*}
\hspace{-.8in} \alpha&={\scriptstyle \left(\frac{1}{2n-4}, \frac{3}{2n-4}, \dots, \frac{1}{2},\frac{1}{2},\frac{1}{2},\dots, \frac{2n-7}{2n-4}, \frac{2n-5}{2n-4}\right)},\\
\hspace{-.8in} \beta&= {\scriptstyle \left(0,0,0, \frac{1}{j},\dots, \frac{j-1}{j}, \frac{1}{2n-2j-4}, \frac{3}{2n-2j-4}, \dots, \frac{2n-2j-7}{2n-2j-4}, \frac{2n-2j-5}{2n-2j-4}\right)}\nonumber
\end{align*}
where $0<j\leq n-2$ is an odd integer.
\item[4)] $\mathcal N_1(j,k,n)$:
\begin{align*}
\hspace{-2.3in} {\scriptstyle \alpha}&= (b_0,a_1,\dots,a_{n-1})\\
\hspace{-2.3in} {\scriptstyle \beta}&= (a_0,b_1,\dots, b_{n-1})\nonumber
\end{align*}
where $(j,k+1)=1$, $a_0=1/2$, $b_0=0$, $e^{2\pi i a_0 },\dots, e^{2\pi i a_{n-1}}$ are the roots of $P_{n,k}(z)$, and $e^{2\pi i b_0 },\dots, e^{2\pi i b_{n-1}}$ are the roots of $Q_{n,k}(z)$.

\item[5)] $\mathcal N_2(j,k,n)$:
\begin{align*}
\hspace{-2.1in} \alpha&=(0,a_0,\dots,a_{n-2}),\\
\hspace{-2.1in} \beta&= ({\scriptstyle \frac{1}{2}}, {\scriptstyle \frac{1}{2}}, {\scriptstyle \frac{1}{2}},b_2,\dots, b_{n-2}).\nonumber
\end{align*}
where $(j,k+1)=1$, $b_0=0$, $b_1=1/2$, $e^{2\pi i a_0 },\dots, e^{2\pi i a_{n-2}}$ are the roots of $P_{n-1,k}(z)$, and $e^{2\pi i b_0 },\dots, e^{2\pi i b_{n-2}}$ are the roots of $Q_{n-1,k}(z)$.
\item[6)] $\mathcal N_3(j,k,n)$:
\begin{align*}
\hspace{-2.1in}\alpha&=({\scriptstyle \frac{1}{2}},a_0,\dots,a_{n-2}),\\
\hspace{-2.1in}\beta&= (0,0,0, b_2,\dots, b_{n-2}).\nonumber
\end{align*}
where $(j,k+1)=1$, $b_0=0$, $b_1=1/2$, $e^{2\pi i a_0 },\dots, e^{2\pi i a_{n-2}}$ are the roots of $P_{n-1,k}(z)$, and $e^{2\pi i b_0 },\dots, e^{2\pi i b_{n-2}}$ are the roots of $Q_{n-1,k}(z)$.
\item[7)] $\mathcal N_4(j,k,n)$:
\begin{align*}
\hspace{-2.1in} \alpha&=({\scriptstyle \frac{1}{2},\frac{1}{2}}, a_0,\dots,a_{n-3}),\\
\hspace{-2.1in} \beta&= (0,0,b_0,b_1,\dots, b_{n-3}).\nonumber
\end{align*}
where $(j,k+1)=1$, $0<j<\frac{n-3}{2}$, $e^{2\pi i a_0 },\dots, e^{2\pi i a_{n-3}}$ are the roots of $P_{n-2,k}(z)$ and $e^{2\pi i b_0 },\dots, e^{2\pi i b_{n-3}}$ are the roots of $Q_{n-2,k}(z)$.
\end{itemize}
\end{thm}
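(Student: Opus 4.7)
The plan is to prove the two directions separately. For the forward direction, for each of the seven families $\mathcal{M}_1, \mathcal{M}_2, \mathcal{M}_3, \mathcal{N}_1, \mathcal{N}_2, \mathcal{N}_3, \mathcal{N}_4$ one checks directly that $(\alpha,\beta)$ is a pair of almost interlacing cyclotomic sequences as in Definition~\ref{almostinterlacing}. Cyclotomicity of $\prod(t-e^{2\pi i\alpha_j})$ and $\prod(t-e^{2\pi i\beta_j})$ is immediate from the explicit formulas: in $\mathcal{M}_i(j,n)$ these products are built out of pieces of the form $(z^{2m}-1)/(z^m-1)$ and $z^j-1$, while in $\mathcal{N}_i(j,k,n)$ they are $P_{m,k}$ and $Q_{m,k}$ (or $P_m,Q_m$) themselves together with the prescribed factor $z\pm 1$. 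Disjointness of the $\alpha_i$'s and $\beta_j$'s is an immediate inspection of denominators and the coprimality constraint $(j,k+1)=1$. The equality $|\sum_i(-1)^{i+r_i}|=n-2$ is a short combinatorial count on the listed orderings. Once these three conditions hold, the Beukers--Heckman signature formula (\ref{sig}) immediately gives signature $(n-1,1)$, so $H(\alpha,\beta)\subset \mathrm{O}(n-1,1)$.

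For the converse in dimension $n>9$, the strategy is to combine Lemma~\ref{interlacingtypes} with the classification of finite hypergeometric groups in Theorem~\ref{finiteH}. Given an arbitrary almost interlacing cyclotomic pair $(\alpha_i),(\beta_i)$ of length $n>9$, Lemma~\ref{interlacingtypes} (after possibly swapping the roles of $\alpha$ and $\beta$, which corresponds to inversion of the monodromy) expresses the data in terms of an interlacing cyclotomic pair $(c_i),(d_i)$ of length $n$, $n-1$, $n-1$, or $n-2$ according to the four options listed there. Interlacing cyclotomic pairs are exactly those for which $H(c,d)$ is finite and whose reflection subgroup acts primitively and irreducibly, so they are classified by Theorem~\ref{finiteH}. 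The bound $n>9$ is precisely what rules out the sporadic finite cases and the exceptional low-dimensional members of the other cases, leaving only the two infinite families appearing in the primitive case of Theorem~\ref{finiteH}(1) and the two infinite families in the imprimitive case Theorem~\ref{finiteH}(3).

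One then reconstructs $(\alpha,\beta)$ by inverting the four substitutions of Lemma~\ref{interlacingtypes} for each infinite Beukers--Heckman family. The primitive $(P,Q)=\bigl((z^{N+1}-1)/(z-1),\,(z^j-1)(z^{N+1-j}-1)/(z-1)\bigr)$ family, combined with Lemma~\ref{interlacingtypes} options (1), (2), (3), (4), yields (respectively) the families $\mathcal{N}_1, \mathcal{N}_2, \mathcal{N}_3, \mathcal{N}_4$ after absorbing the $z\mapsto -z$ twin by a scalar shift. The imprimitive $(P,Q)=\bigl(z^N+1,\,(z^j-1)(z^{N-j}+1)\bigr)$ family, combined with the same four options, collapses (after scalar shift and the parity/coprimality restrictions forced by the inverse substitution) onto the three families $\mathcal{M}_1,\mathcal{M}_2,\mathcal{M}_3$, with $N=n$, $n-1$, $n-2$ according to the option. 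The coprimality and parity constraints in the statement of the theorem (e.g.\ $j$ odd in $\mathcal{M}_1,\mathcal{M}_3$, the condition $j/(n,j)$ odd in $\mathcal{M}_2$, $(j,k+1)=1$ in $\mathcal{N}_i$) are exactly what is needed to ensure that the reverse substitution produces a genuine cyclotomic pair and that each $H(\alpha,\beta)$ is primitive.

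The main obstacle is a clean bookkeeping of this case analysis: several pairs (reduction option, Beukers--Heckman family) yield sequences that are scalar shifts of one another, and these redundancies must be identified so that the resulting list contains exactly seven families rather than eight. One must also verify carefully that no combination of a reduction from Lemma~\ref{interlacingtypes} with a \emph{sporadic} entry of Theorem~\ref{finiteH}(1) produces a hyperbolic pair of dimension $n>9$, which is where the bound $n>9$ enters the argument and below which the sporadic table in Section~\ref{numeric} takes over. The arguments themselves are essentially combinatorial substitutions, but tracking the shift-equivalences and parity conditions across all four options of Lemma~\ref{interlacingtypes} is the part that requires the most care.
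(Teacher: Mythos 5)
Your proposal follows essentially the same route as the paper: reduce an almost interlacing pair to an interlacing one via Lemma~\ref{interlacingtypes}, invoke the Beukers--Heckman classification (Theorem~\ref{finiteH}) of the resulting finite monodromies in lengths $n$, $n-1$, $n-2$, and invert the four substitutions, with the same collapse of eight (option, family) combinations onto seven families via scalar shift. The only slip is your parenthetical claim that interlacing cyclotomic pairs are exactly the finite cases whose reflection subgroup acts primitively and irreducibly --- they are exactly the finite cases, and the imprimitive and reducible entries of Theorem~\ref{finiteH} are precisely what produce the $\mathcal M_i$ and several of the $\mathcal N_i$ --- but since your subsequent case analysis uses the full classification, the argument is unaffected.
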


\begin{proof}
  From the previous section, we have that every family of pairs $(\alpha,\beta)$ for which $H(\alpha,\beta)\subset \textrm{O}(n-1,1)$ can be derived from a family of pairs $(\alpha',\beta')$ for which $H(\alpha',\beta')$ is finite, but not necessarily primitive or irreducible.  There are several cases to consider.

\noindent{\bf Case 1:} $|\alpha'|=|\beta'|=n$.

Without loss of generality, assume $\alpha'_k=1/2$ for some $k$ and $\beta'_1=0$. By Lemma~\ref{interlacingtypes}, letting 
$$\alpha_j=\alpha'_j \mbox{ for $j\not=k$},$$ $$\alpha_k=0,$$  $$\beta_j=\beta'_j \mbox{ for $j\not=1$},$$ $$\beta_1=1/2$$  we have that $H(\alpha,\beta)\subset \textrm{O}(n-1,1)$.  

\vspace{.1in}

Suppose $H(\alpha',\beta')$ is finite, imprimitive, and irreducible.  From Theorem~\ref{finiteH} there are two infinite families of $(\alpha',\beta')$, the second of which gives pairs that are scalar shifts by $1/2$ of the pairs coming from the first family.  Since we are only interested in determining pairs up to scalar shift, we consider just one of these families, which has corresponding cyclotomic polynomials $P_n(z)$ and $Q_n(z)$ from (\ref{polystate0}) where $j$ is odd.  Up to scalar shift, this yields the family $\mathcal M_1$:
\begin{eqnarray*}\label{family2}
\alpha&=&{\scriptstyle \left(0, \frac{1}{2n}, \frac{3}{2n}, \dots, \frac{n-1}{2n}, \frac{n+1}{2n},\dots, \frac{2n-3}{2n}, \frac{2n-1}{2n}\right),}\\
\beta&=& {\scriptstyle \left(\frac{1}{j},\dots, \frac{j-1}{j},\frac{1}{2}, \frac{1}{2n-2j}, \frac{3}{2n-2j}, \dots, \frac{2n-2j-3}{2n-2j}, \frac{2n-2j-1}{2n-2j}\right)}\nonumber
\end{eqnarray*}
where $j$ is odd. Note that, unlike $(\alpha',\beta')$, the pair $(\alpha,\beta)$ is no longer imprimitive, and still irreducible.

\vspace{.1in}

Suppose $H(\alpha',\beta')$ is finite and reducible or primitive and irreducible.  Then for every $\ell|n$, Theorem~\ref{finiteH} gives two infinite families of $(\alpha',\beta')$, of which we consider just the first, as the other can be obtained via scalar shift of the first. The corresponding cyclotomic polynomials in this case are $P_{n,k}(z)$ and $Q_{n,k}(z)$ from (\ref{polystate}), where $j,k,\ell\in\mathbb N$, with $(j,k+1)=1$ and $\ell k=n$.   Let $a_0=1/2$ and denote the roots of $P_{n,k}(z)$ by $e^{2\pi i a_0 },\dots, e^{2\pi i a_{n-1}}$.  Let $b_0=0$ and denote the roots of $Q_{n,k}(z)$ by $e^{2\pi i b_0 },\dots, e^{2\pi i b_{n-1}}$.  Up to scalar shift, this yields the family $\mathcal N_1$:
\begin{eqnarray}\label{family3}
\alpha&=&(b_0, a_1,\dots,a_{n-1}),\\
\beta&=& (a_0,b_1,\dots, b_{n-1})\nonumber
\end{eqnarray}
Note that the pair $(\alpha,\beta)$ is primitive and irreducible.

\vspace{.1in}

\noindent{\bf Case 2:} $|\alpha'|=|\beta'|=n-1$.

As before, we consider the imprimitive irreducible, and the primitive irreducible or reducible cases.

\vspace{.1in}

Suppose $H(\alpha',\beta')$ is finite, imprimitive, and irreducible.  From Theorem~\ref{finiteH},  we have that up to scalar shift the only family of $(\alpha',\beta')$ in this case corresponds to the roots of $P_{n-1}(z)$ and $Q_{n-1}(z)$ in (\ref{polystate0}):
\begin{eqnarray*}
\alpha'&=&{\scriptstyle \left(\frac{1}{2n-2}, \frac{3}{2n-2}, \dots, \frac{2n-5}{2n-2}, \frac{2n-3}{2n-2}\right),}\\
\beta'&=& {\scriptstyle \left(0, \frac{1}{j},\dots, \frac{j-1}{j}, \frac{1}{2n-2j-2}, \frac{3}{2n-2j-2}, \dots, \frac{2n-2j-5}{2n-2j-2}, \frac{2n-2j-3}{2n-2j-2}\right)}\nonumber
\end{eqnarray*}
where $j/(n,j)$ is odd.  According to Lemma~\ref{interlacingtypes}, there are two ways to obtain from $(\alpha',\beta')$ a pair $(\alpha,\beta)$ for which $H(\alpha,\beta)$ is signature $(n-1,1)$ is obtained as above, giving the two families
\begin{eqnarray*}\label{family5}
\alpha&=&{\scriptstyle \left(\frac{1}{2n-2}, \frac{3}{2n-2}, \dots, \frac{1}{2},\dots, \frac{2n-5}{2n-2}, \frac{2n-3}{2n-2}\right),}\\
\beta&=& {\scriptstyle \left(0,0,0, \frac{1}{j},\dots, \frac{j-1}{j}, \frac{1}{2n-2j-2}, \frac{3}{2n-2j-2}, \dots, \frac{n-j-3}{2n-2j-2}, \frac{n-j+1}{2n-2j-2}, \dots, \frac{2n-2j-5}{2n-2j-2}, \frac{2n-2j-3}{2n-2j-2}\right)}\nonumber
\end{eqnarray*}
where $(j,2n)=1$ and
\begin{eqnarray*}\label{family5}
\alpha&=&{\scriptstyle \left(0, \frac{1}{2n-2}, \frac{3}{2n-2}, \dots, \frac{2n-5}{2n-2}, \frac{2n-3}{2n-2}\right),}\\
\beta&=& {\scriptstyle \left(\frac{1}{2},\frac{1}{2},\frac{1}{2}, \frac{1}{j},\dots, \frac{j-1}{j}, \frac{1}{2n-2j-2}, \frac{3}{2n-2j-2}, \dots, \frac{n-j-3}{2n-2j-2}, \frac{n-j+1}{2n-2j-2}, \dots, \frac{2n-2j-5}{2n-2j-2}, \frac{2n-2j-3}{2n-2j-2}\right)}\nonumber
\end{eqnarray*}
where $j/(n,j)$ is odd.  However, since these two families are shifts of each other by $1/2$, we record them under one family $\mathcal M_2$.

\vspace{.1in}

Suppose $H(\alpha',\beta')$ is finite and reducible or primitive irreducible.  From Theorem~\ref{finiteH}, we have that for every $\ell|n-1$ that there is only one family up to scalar shift of $(\alpha',\beta')$ for which $H(\alpha',\beta')$ is reducible.  Namely, let $j,k,\ell\in\mathbb N$, with $(j,k+1)=1$ and $\ell k=n-1$.  Denote the roots of $P_{n-1,k}(z)$ in (\ref{polystate}) by $e^{2\pi i a_0 },\dots, e^{2\pi i a_{n-2}}$.  Let $b_0=0$, $b_1=1/2$, and denote the roots of $Q_{n-1,k}(z)$ in (\ref{polystate}) by $e^{2\pi i b_0 },\dots, e^{2\pi i b_{n-2}}$. Then the family
\begin{eqnarray*}
\alpha'&=&(a_0,\dots,a_{n-2}),\\
\beta'&=& (b_0,\dots, b_{n-2})\nonumber
\end{eqnarray*}
is the only family up to scalar shift in this case such that $H(\alpha',\beta')$ is reducible.  According to Lemma~\ref{interlacingtypes}, there are two ways to obtain a pair $(\alpha,\beta)$ for which $H(\alpha,\beta)$ is signature $(n-1,1)$.  One way is to remove the term $b_0=0$ and add two $1/2$'s to $\beta'$, and add the term $0$ to $\alpha'$, yielding the family $\mathcal N_2$:

\begin{eqnarray}\label{family6}
\alpha&=&(0,a_0,\dots,a_{n-2}),\\
\beta&=& ({\scriptstyle \frac{1}{2},\frac{1}{2}}, b_1,\dots, b_{n-2}).\nonumber
\end{eqnarray}
\vspace{.1in}

Another way is to add the term $1/2$ to $\alpha'$, take away the term $b_1=1/2$ from $\beta'$, and insert two $0's$ into $\beta'$, yielding the family $\mathcal N_3$:
\begin{eqnarray}\label{family4}
\alpha&=&({\scriptstyle \frac{1}{2}},a_0,\dots,a_{n-2}),\\
\beta&=& (0,0,0, b_2,\dots, b_{n-2}).\nonumber
\end{eqnarray}

\noindent{\bf Case 3:} $|\alpha'|=|\beta'|=n-2$.

\vspace{0.1in}

Suppose $H(\alpha',\beta')$ is finite, imprimitive, and irreducible.  From Theorem~\ref{finiteH},  we have that up to scalar shift the only family of $(\alpha',\beta')$ in this case is obtained from the roots of the cyclotomic polynomials $P_{n-2}(z)$ and $Q_{n-2}(z)$ in (\ref{polystate0}):
\begin{eqnarray*}
\alpha'&=&{\scriptstyle \left(\frac{1}{2n-4}, \frac{3}{2n-4}, \dots, \frac{2n-7}{2n-4}, \frac{2n-5}{2n-4}\right),}\\
\beta'&=& {\scriptstyle \left(0, \frac{1}{j},\dots, \frac{j-1}{j}, \frac{1}{2n-2j-4}, \frac{3}{2n-2j-4}, \dots, \frac{2n-2j-7}{2n-2j-4}, \frac{2n-2j-5}{2n-2j-4}\right)}\nonumber
\end{eqnarray*}
where $j$ is odd.  The corresponding $(\alpha,\beta)$ for which $H(\alpha,\beta)$ is signature $(n-1,1)$ is obtained by adding two $1/2$'s to $\alpha'$, and adding two $0$'s to $\beta'$, yielding the family $\mathcal M_3$:
\begin{eqnarray*}\label{family8}
\alpha&=&{\scriptstyle \left(\frac{1}{2n-4}, \frac{3}{2n-4}, \dots, \frac{1}{2},\frac{1}{2},\frac{1}{2},\dots, \frac{2n-5}{2n-2}, \frac{2n-3}{2n-2}\right)},\\
\beta&=& {\scriptstyle \left(0,0,0, \frac{1}{j},\dots, \frac{j-1}{j}, \frac{1}{2n-2j-4}, \frac{3}{2n-2j-4}, \dots, \frac{2n-2j-7}{2n-2j-4}, \frac{2n-2j-5}{2n-2j-4}\right)}\nonumber
\end{eqnarray*}
where $j$ is odd.

\vspace{.1in}

Suppose $H(\alpha',\beta')$ is finite and reducible or primitive irreducible.  From Theorem~\ref{finiteH}, we have that for every $\ell|n-2$ that there is only one family up to scalar shift of $(\alpha',\beta')$ for which $H(\alpha',\beta')$ is reducible.  Namely, let $j,k,\ell\in\mathbb N$, with $(j,k+1)=1$ and $\ell k=n-2$.  Denote the roots of $P_{n-2,k}(z)$ in (\ref{polystate}) by $e^{2\pi i a_0 },\dots, e^{2\pi i a_{n-3}}$.  Denote the roots of $Q_{n-2,k}(z)$  in (\ref{polystate}) by $e^{2\pi i b_0 },\dots, e^{2\pi i b_{n-3}}$. Then the family
\begin{eqnarray*}
\alpha'&=&(a_0,\dots,a_{n-3}),\\
\beta'&=& (b_0,\dots, b_{n-3})\nonumber
\end{eqnarray*}
is the only family up to scalar shift in this case such that $H(\alpha',\beta')$ is reducible. 

The corresponding $(\alpha,\beta)$ for which $H(\alpha,\beta)$ is signature $(n-1,1)$ is obtained by adding two $1/2$'s to $\alpha'$, and adding two $0$'s to $\beta'$, yielding the family $\mathcal N_4$:

\begin{eqnarray}\label{family9}
\alpha&=&({\scriptstyle \frac{1}{2},\frac{1}{2}}, a_0,\dots,a_{n-3}),\\
\beta&=& (0,0,b_0,b_1,\dots, b_{n-3}).\nonumber
\end{eqnarray}

\vspace{0.1in}

This exhausts all of the possibilities and thus we have the statement in the theorem as desired.  Since from the previous section we have that any primitive hypergeometric hyperbolic monodromy group is obtained by permuting the coordinates of some interlacing pair $(\alpha',\beta')$, and since there are no sporadic such interlacing pairs in dimensions $n>7$ , we have that the families $\mathcal M_1,\mathcal M_2,\mathcal M_3, \mathcal N_1,\mathcal N_2,\mathcal N_3,$ and $\mathcal N_4$ completely describe the primitive hyperbolic hypergeometric $H(\alpha,\beta)$'s in dimension $n>9$.
\end{proof}

\begin{remark}
We end this subsection with a remark about the integrality of the coefficients of $_nF_{n-1}$ for hyperbolic hypergeometrics.  Writing $R(t)=P(t)/Q(t)$, where $P$ and $Q$ are as in Theorem~\ref{levthm}, as
$$\frac{(t^{a_1}-1)(t^{a_2}-1)\cdots(t^{a_K}-1)}{(t^{b_1}-1)(t^{b_2}-1)\cdots(t^{b_L}-1)}$$
with $a_j,b_k$ positive integers and $a_j\not=b_k$.  The coefficients of the corresponding hypergeometric function are

\begin{equation}\label{star}
   u_m=\frac{(ma_1)!(ma_2)!\cdots(ma_K)!}{(mb_1)!\cdots(mb_L)!} \tag{$\ast$}
  \end{equation}
  If these are to be integers then $d:=L-K$ must be at least $1$. Note that $d$ is the multiplicity of the $k$'s for which $\beta_k=0$.  Then
  $$\sum_{m=0}^\infty u_mz^m=_nF_{n-1}(\alpha_1,\dots,\alpha_n; \beta_1,\dots,\beta_{n-1}|Cz)$$
where $C=\frac{a_1^{a_1}a_2^{a_2}\cdots a_K^{a_K}}{b_1^{b_1}b_2^{b_2}\cdots b_L^{b_L}}$ (see \cite{vill}, \cite{bob}).

It is easy to check using Landau's criterion \cite{land} and our characterization of hyperbolic hypergeometric monodromies, that for the latter $u_m\in \mathbb Z$ iff $d=3$.  From the description of the families of hyperbolic hypergeometric monodromies in Theorem~\ref{theo} we see that $M_2(j,n)$, $M_3(j,n)$, $N_3(j,k,n)$, and $N_4(j,k,n)$ are the infinite families with integral coefficients while the sporadic ones can be read off from Tables 2 and 3, namely the entries with $\beta_1=\beta_2=\beta_3=0$.  This can be used to give the list of integral factorials (\ref{star}) which correspond to hyperbolic hypergeometric monodromies.  This is the analogue of \cite{vill} and \cite{bob} who classify the integral $u_m$'s in (\ref{star}) which correspond to finite monodromy groups (which in turn correspond to $d=1$).
\end{remark}

\subsection{The quadratic form}\label{quadform}
In this section we calculate the quadratic forms preserved by the primitive hyperbolic hypergeometric monodromy groups described in the previous section.  This will be a necessary ingredient in our minimal distance graph method described in Section~\ref{mindistgraph}.  Recall that the monodromy group $H=H(\alpha,\beta)$ is generated by two matrices
$$
A:=\left(\begin{array}{ccccc}
0 & 0 & \cdots & 0 &  -1\\
1 & 0 & \cdots & 0 & -a_{n-1} \\
0 & 1 & \cdots & 0 & -a_{n-2} \\
\vdots & \vdots & \ddots &\vdots &\vdots \\
0 & 0 & \cdots & 1 & -a_{1}
\end{array}\right)
\text{ and }
B:=\left(\begin{array}{ccccc}
0 & 0 & \cdots & 0 &  1\\
1 & 0 & \cdots & 0 & -b_{n-1} \\
0 & 1 & \cdots & 0 & -b_{n-2} \\
\vdots& \vdots& \ddots &\vdots & \vdots\\
0 & 0 & \cdots & 1 & -b_{1}
\end{array}\right)
$$
where the characteristic polynomials of $A$ and $B$,
$P(x):=x^n+a_1x^{n-1}+\cdots +a_{n-1}x+1$
and
$Q(x):=x^n+b_1x^{n-1}+\cdots +b_{n-1}x-1$ respectively, 
are products of cyclotomic polynomials.  Denote
\begin{equation}\label{Candv}
C:=A^{-1}B=\left(\begin{array}{ccccc}
1     & 0 &   \cdots & 0 & -(a_{n-1}+b_{n-1}) \\
0     & 1 &  \cdots  & 0 & -(a_{n-2}+b_{n-2}) \\
  \vdots       &  \vdots         &\ddots    &   \vdots      &  \vdots            \\
0     & 0 &     \cdots      & 1 &  -(a_1+b_1)        \\
0     & 0 &   \cdots  & 0 & -1
\end{array}\right) \text{  and  } v:=\left(\begin{array}{c}
a_{n-1}+b_{n-1} \\
a_{n-2}+b_{n-2} \\
\vdots\\
a_1+b_1        \\
2
\end{array}\right).
\end{equation}
The eigenvalues of $C$ are $1$ and $-1$ and their 
geometric multiplicities are $n-1$ and $1$ respectively. 
The first $n-1$ elements $e_1,\ldots,e_{n-1}$ of the standard basis
are eigenvectors for the eigenvalue $1$ while $v$ is an eigenvector
for the eigenvalue $-1$.

\begin{lemma}\label{lemtec1} Let $f \in \M_{n \times n}(\Z)$. Then $A^tfA=f$ if and only if $\tilde{A}^{i-1}fe_1=fe_i$ for every $2 \le i \le n$ where
$\tilde{A}=(A^t)^{-1}$.
\end{lemma}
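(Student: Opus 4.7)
The plan is to rewrite the condition $A^{t} f A = f$ as the matrix identity $fA = \tilde A f$ (multiply on the left by $\tilde A = (A^{t})^{-1}$) and compare the two sides column by column. Since $A$ is the companion matrix of $P$, we have $A e_i = e_{i+1}$ for $1 \le i \le n-1$, so the first $n-1$ columns of $fA = \tilde A f$ read $f e_{i+1} = \tilde A (f e_i)$ for $1 \le i \le n-1$. Iterating this recurrence starting from $i=1$ gives the stated condition $f e_i = \tilde A^{i-1} f e_1$ for $2 \le i \le n$, which handles the ``only if'' direction.

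For the converse, the conditions $f e_i = \tilde A^{i-1} f e_1$ immediately force the first $n-1$ columns of the matrix identity $fA = \tilde A f$ to agree, so it only remains to verify the $n$-th column, i.e.\ that $fAe_n = \tilde A\, fe_n$. Writing $e_n = A^{n-1} e_1$ and using Cayley--Hamilton, $P(A) = 0$, one expands
\[
Ae_n = A^{n} e_1 = -e_1 - a_{n-1} e_2 - \cdots - a_1 e_n.
\]
Applying $f$ to both sides, substituting the hypothesis $fe_i = \tilde A^{i-1} f e_1$, and comparing with $\tilde A f e_n = \tilde A^{n} f e_1$ reduces the remaining column-$n$ equality to the single identity
\[
\bigl(\tilde A^{n} + a_1 \tilde A^{n-1} + \cdots + a_{n-1}\tilde A + I\bigr)\, f e_1 = 0,
\]
that is, $P(\tilde A)\, f e_1 = 0$.

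The crux of the argument is therefore to show $P(\tilde A) = 0$, and this is the one place where the standing hypothesis that $P$ is a product of cyclotomic polynomials is essential. Since the roots of $P$ come in reciprocal pairs, $P$ is palindromic: $x^{n} P(1/x) = P(x)$. A short determinant calculation (using $\det A = (-1)^{n}$) then shows that the characteristic polynomial of $A^{-1}$ equals $P$, and since $\tilde A = (A^{t})^{-1}$ is conjugate to $A^{-1}$, the same holds for $\tilde A$. Cayley--Hamilton applied to $\tilde A$ therefore yields $P(\tilde A) = 0$, completing the $n$-th column check and hence the ``if'' direction. I expect this reciprocity step to be the only subtle point of the proof: without the self-reciprocal property of $P$ the column-$n$ condition would not be forced by the other $n-1$ relations, and the equivalence would fail.
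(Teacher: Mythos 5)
Your proof is correct and follows essentially the same route as the paper's: compare the first $n-1$ columns of $fA=\tilde A f$ for both directions, then settle the last column by combining Cayley--Hamilton for $A$ with the fact that $P$ is also the characteristic polynomial of $\tilde A$ (the paper asserts this directly from $P$ being a product of cyclotomic polynomials; you supply the same fact via the self-reciprocity of the roots). The only difference is that you spell out this reciprocity step in more detail than the paper does.
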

\begin{proof} The only if part follows from comparing 
the first $n-1$ columns of both sides of 
$\tilde{A}f=fA$. For the if part it is suffices to check that also the last columns of $\tilde{A}f$ and $fA$ are equal. The equalities of the first $n-1$ columns imply 
that $\tilde{A}^{i}fe_1=fA^ie_1$ for every $1 \le i \le n-1$. The characteristic polynomial $P(x)$ of $A$ is also the characteristic polynomial of $\tilde{A}$ since it is a a product of cyclotomic polynomials. Thus,
$$ fAe_n=fA^{n}e_1=-f(a_1A^{n-1}+\cdots +a_{n-1}A+A)e_1=
$$ 
$$-(a_1\tilde{A}^{n-1}+\cdots +a_{n-1}\tilde{A}+\tilde{A})fe_1=
\tilde{A}^nfe_1=\tilde{A}fA^{n-1}e_1=\tilde{A}fe_n
$$
\end{proof}
\begin{lemma}\label{lemtec2} Assume that $f \in \M_{n \times n}(\Z)$ satisfies $A^tfA=f$. Then $B^tfB=f$ if and only if $fe_1$ is Euclidean-orthogonal to ${B}^{2-i}v$ for every $2 \le i \le n$. In particular, if $A^tfA=f$ and $B^tfB=f$ then $fe_i$ is Euclidean-orthogonal to $v$ for every $1 \le i \le n-1$. 
\end{lemma}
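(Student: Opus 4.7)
The plan is to combine Lemma~\ref{lemtec1} (applied with $B$ in place of $A$) with a rank-one decomposition of $C = A^{-1}B$. The key observation is that the explicit formula for $C$ in \eqref{Candv} can be rewritten as
\[
C \;=\; I - v\,e_n^t;
\]
indeed $Ce_i=e_i$ for $i<n$ and $Ce_n = e_n-v$. Since $e_n^tv=2$, this yields $Cv=-v$ and $C^2=I$, so $(C^t)^{-1} = C^t = I - e_nv^t$. Because $B=AC$, taking inverse-transposes gives $\tilde{B} = \tilde{A}(C^t)^{-1} = \tilde{A} - \tilde{A}e_nv^t$, hence the rank-one identity
\[
\tilde{A} - \tilde{B} \;=\; \tilde{A}\,e_n\,v^t,
\]
which is what drives the rest of the argument.

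Next I would apply Lemma~\ref{lemtec1} with $B$ replacing $A$: the condition $B^tfB=f$ is equivalent to $\tilde{B}^{i-1}fe_1 = fe_i$ for $2\le i\le n$. Since the hypothesis $A^tfA=f$ already gives $fe_i = \tilde{A}^{i-1}fe_1$, the equivalence reduces to
\[
(\tilde{A}^{i-1} - \tilde{B}^{i-1})\,fe_1 \;=\; 0 \qquad (2\le i\le n).
\]
The standard telescoping, combined with the rank-one identity, gives
\[
\tilde{A}^{i-1} - \tilde{B}^{i-1} \;=\; \sum_{k=0}^{i-2}\tilde{A}^{i-1-k}(\tilde{A}-\tilde{B})\tilde{B}^k \;=\; \sum_{k=0}^{i-2}\tilde{A}^{i-1-k}e_n\bigl(v^t\tilde{B}^k\bigr),
\]
so the condition becomes $\sum_{k=0}^{i-2}\tilde{A}^{i-1-k}e_n\cdot\bigl(v^t\tilde{B}^kfe_1\bigr) = 0$ for $2\le i\le n$. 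Recognizing that $v^t\tilde{B}^kfe_1 = (B^{-k}v)^tfe_1$ is precisely the Euclidean inner product of $fe_1$ with $B^{-k}v = B^{2-(k+2)}v$, the scalar coefficients appearing here are exactly the claimed orthogonality conditions $fe_1\perp B^{2-i}v$ for $2\le i\le n$.

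One implication is now immediate: if $fe_1\perp B^{2-i}v$ for all $2\le i\le n$ then every scalar coefficient vanishes and the sum equals zero. For the converse I would induct on $i$. The base case $i=2$ reads $\tilde{A}e_n\cdot(v^tfe_1)=0$, and since $\tilde{A}$ is invertible (so $\tilde{A}e_n\neq 0$), we conclude $v^tfe_1 = 0$. For larger $i$, the inductive hypothesis kills every term with $k<i-2$, leaving $\tilde{A}e_n\cdot(v^t\tilde{B}^{i-2}fe_1)=0$, whence $v^t\tilde{B}^{i-2}fe_1 = 0$, i.e., $fe_1\perp B^{2-i}v$. For the ``in particular'' clause, once both $A^tfA=f$ and $B^tfB=f$ hold, Lemma~\ref{lemtec1} applied to $B$ gives $fe_i = \tilde{B}^{i-1}fe_1$, so $v^tfe_i = v^t\tilde{B}^{i-1}fe_1 = 0$ for $1\le i\le n-1$ by the orthogonality just established (since then $0\le i-1\le n-2$). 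There is no serious obstacle in this plan; the only conceptual step is spotting the rank-one decomposition $C = I - ve_n^t$, after which everything reduces to a direct calculation exploiting that $\tilde{A}e_n \neq 0$.
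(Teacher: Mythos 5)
Your proof is correct and follows essentially the same route as the paper's: both reduce via Lemma~\ref{lemtec1} (applied to $A$ and to $B$) to the condition $(\tilde{A}^{i-1}-\tilde{B}^{i-1})fe_1=0$, exploit the rank-one identity $\tilde{A}-\tilde{B}=\tilde{A}e_nv^t$ (which the paper states in the equivalent form that the first row of $\tilde{A}-\tilde{B}$ is $-v^t$ and the rest vanish, i.e.\ $\tilde{A}e_n=-e_1$), and then induct on $i$; your treatment of the ``in particular'' clause via Lemma~\ref{lemtec1} for $B$ is a valid substitute for the paper's one-liner using $C^tfC=f$ together with $Cv=-v$ and $Ce_i=e_i$. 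The only blemish is an off-by-one in your displayed telescoping identity --- the exponent should be $\tilde{A}^{i-2-k}$ rather than $\tilde{A}^{i-1-k}$ --- which is harmless here because your base case and induction step only use that $\tilde{A}^{j}e_n\neq 0$, true for every $j$ since $\tilde{A}$ is invertible.
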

\begin{proof} 
Lemma \ref{lemtec1} and its analog with respect to $B$ implies that $B^tfB=f$ if and only if  $\tilde{A}^{i-1}fe_1=\tilde{B}^{i-1}fe_1$ for every $2 \le i \le n$. As the first row of
$\tilde{A}-\tilde{B}$ is $-v^t$ while the other rows are zeros, $fe_1$ must be orthogonal to $v$ (under the usual Euclidean scalar product). For every $i \ge 3$
$$\tilde{A}^{i-1}-\tilde{B}^{i-1}=\tilde{A}\left(\tilde{A}^{i-2}
-\tilde{B}^{i-2}\right)-
\left(\tilde{B}-\tilde{A}\right)\tilde{B}^{i-2}$$
so by induction on $i$ we see that $f$ satisfies the required condition if and only if
$(\tilde{B}-\tilde{A})\tilde{B}^{i-2}f=0$ which is equivalent to $f$ being Euclidean-orthogonal to $B^{2-i}v$ for every $3
\le i \le n$. Finally, if $A^tfA=f$, $B^tfB=f$ then also 
$C^tfC=C$. Thus, for $1 \le i \le n-1$ we have $v^tfe_i=(Cv)^tfCe_i=-v^tfe_i$ so $v^tfe_i=0$.
\end{proof}

\begin{prop}\label{prop form} Let $A$, $B$, $C$, $P(X)$, $Q(X)$ and $v$ be as above. Assume that the group $H:=\langle A,B\rangle$  is primitive. Then:
\begin{itemize}
\item[(1)] The vectors $v,Bv,\ldots B^{n-1}v$ are linearly independent and $H=\langle A,B\rangle$ preserves the lattice $L$ spanned by them.
\item[(2)] There exists a unique (up to a scalar product) non-zero integral quadratic form $(\cdot,\cdot)$ such that $A$ and $B$ belong to its orthogonal group.
\item[(3)] If the quadratic form is normalized to have 
$(v,v)=-2$ then $(v,u)$ equals to minus the $n^{\text{th}}$-coordinate of $u$ for every $u \in \Z^n$.
\item[(4)] If the coefficients of $Q(x)$ satisfy  $B_i=(-1)^{i}$ then the matrix representing $f$ w.r.t $v,Bv,\ldots B^{n-1}v$ is given by 
$$f_{i,j}:=\left\{\begin{array}{ccc}
-2 &\text{if}& |i-j|=0 \\
-1-a_1 & \text{if}&|i-j|=1 \\
-a_k-a_{k-1}& \text{if}& |i-j|=k \not \in \{0,1\}
\end{array}\right.$$ 
\end{itemize}
\end{prop}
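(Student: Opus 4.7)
\emph{Part (1).} The argument hinges on the observation that $C=A^{-1}B$ acts as a ``shear'' along $v$: since $Ce_i=e_i$ for $i<n$ and $Ce_n=e_n-v$, direct expansion gives $Cx=x-x_nv$ for every $x\in\mathbb{C}^n$, where $x_n$ is the last coordinate. Consequently $W:=\mathrm{Span}_{\mathbb{C}}(B^kv:k\in\mathbb{Z})$ is $B$-invariant by construction and $C$-invariant because $v\in W$ forces $Cw=w-w_nv\in W$ for all $w\in W$. Hence $W$ is $H$-invariant, and the primitivity of $H$ (which implies irreducibility by Theorem~5.3 of \cite{bh}) forces $W=\mathbb{C}^n$, giving the linear independence. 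For the lattice claim, Cayley--Hamilton applied to $B$ (with integer coefficients $b_i$) shows $B$-invariance of $L$, while $A\cdot B^kv=BC\cdot B^kv=B^{k+1}v-(B^kv)_nBv\in L$ since $B^kv\in\mathbb{Z}^n$.

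\emph{Part (2).} Uniqueness is Schur's lemma: the radical of any non-zero $H$-invariant symmetric form is $H$-invariant, hence trivial, so the form is non-degenerate; any two such forms differ by $f^{-1}g$ which commutes with $H$ and is therefore a scalar. For existence, the reciprocity of the characteristic polynomials $P,Q$ (products of cyclotomic polynomials) together with $c_{\alpha,\beta}=-1$ implies via \cite{bh} that $H$ preserves a non-degenerate symmetric bilinear form. An alternative self-contained construction uses Lemmas~\ref{lemtec1}--\ref{lemtec2}: one chooses $fe_1$ in the one-dimensional subspace cut out by the orthogonality conditions $fe_1\perp B^{2-i}v$ for $2\le i\le n$, and then sets $fe_i=\tilde A^{i-1}fe_1$. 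Symmetry of the resulting $f$ follows because $f^T$ is also $H$-invariant, so by uniqueness $f^T=\lambda f$ with $\lambda=\pm1$, and the orthogonal (rather than symplectic) hypothesis on $H$ forces $\lambda=+1$.

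\emph{Parts (3) and (4).} For (3), a direct calculation yields $Cv=(1-v_n)v=-v$, and $C$-invariance of the form gives $(x,v)=(Cx,Cv)=-(Cx,v)$; expanding $Cx=x-x_nv$ produces $(x,v)-x_n(v,v)=-(x,v)$, whence $(x,v)=\tfrac12 x_n(v,v)=-x_n$ under the normalization $(v,v)=-2$. For (4), $B$-invariance and symmetry of the form collapse the Gram matrix entries to one-parameter data: $f_{i,j}=(B^{i-1}v,B^{j-1}v)=(v,B^{j-i}v)=-(B^{j-i}v)_n$ by (3). Under the hypothesis $b_i=(-1)^i$, one proves by induction on $k$ (using the companion-matrix structure of $B$) that $(B^kv)_n=a_{k-1}+a_k$ for $1\le k\le n-1$, with the convention $a_0:=1$, matching the stated formula. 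The main computational obstacle is this induction, for which one must simultaneously track the last two coordinates of $B^kv$ via the recursions $(B^{k+1}v)_n=(B^kv)_{n-1}+(B^kv)_n$ and $(B^{k+1}v)_{n-1}=(B^kv)_{n-2}-(B^kv)_n$ forced by $b_1=-1$ and $b_2=1$; once the pattern is identified, the cancellation of the alternating $b_i$'s into the two-term expression $a_{k-1}+a_k$ becomes routine.
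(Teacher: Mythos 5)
Your proposal is correct in substance but routes parts (1) and (2) differently from the paper, and the comparison is instructive. For (1) you exploit the identity $Cx=x-x_nv$ to show directly that $\mathrm{Span}(B^kv:k\in\Z)$ is $\langle B,C\rangle$-invariant and that $A\cdot B^kv=B^{k+1}v-(B^kv)_nBv\in L$; this is cleaner than the paper, which first constructs the invariant form, deduces $Cu=u-2\frac{(v,u)}{(v,v)}v$ from (3), and then shows that the normal subgroup $H_r$ preserves $L$ (using primitivity to get irreducibility of $H_r$). Your order of quantifiers is also tidier: the paper proves existence in (2) and then (3) before (1), whereas your (1) is independent of the form. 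For uniqueness in (2) you invoke Schur's lemma (legitimate here, since irreducibility in this paper means irreducibility over $\C$), while the paper derives uniqueness from the explicit linear conditions of Lemma~\ref{lemtec2} once (1) guarantees the vectors $B^{2-i}v$ are independent. Parts (3) and (4) coincide with the paper's computation in all essentials.

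Two soft spots. First, in your ``self-contained'' symmetry argument for (2), the step ``the orthogonal (rather than symplectic) hypothesis on $H$ forces $\lambda=+1$'' is circular as written: that $H$ is orthogonal is part of what is being established. The paper's fix is worth internalizing: if $f$ were antisymmetric then $v^tfv=0$, and Lemma~\ref{lemtec2} gives $v^tfe_i=0$ for $1\le i\le n-1$, so $v$ lies in the radical of $f$, which is a proper nonzero $H$-invariant subspace, contradicting irreducibility; hence $f+f^t\neq 0$ is the desired symmetric form. (For $n$ odd one can instead note that a non-degenerate alternating form cannot exist.) Second, in (4) the two recursions you record for $(B^{k+1}v)_n$ and $(B^{k+1}v)_{n-1}$ do not close by themselves: $(B^{k+1}v)_{n-1}$ depends on $(B^kv)_{n-2}$, and so on down the coordinates. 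The statement that actually closes the induction is the one the paper uses: for $1\le k\le n-1$ and $b_i=(-1)^i$, the bottom row of $B^k$ equals $e_{n-k}^t+e_{n-k+1}^t$ (the new last entry vanishes because $b_k+b_{k+1}=0$), whence $(B^kv)_n=v_{n-k}+v_{n-k+1}=a_{k-1}+a_k$ after the $b$'s cancel, which combined with your reduction $f_{i,j}=-(B^{|i-j|}v)_n$ gives (4).
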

\begin{proof}
We start by proving the existence part of $(2)$.
By Lemma \ref{lemtec2} there exists a non-zero $f \in \M_{n \times n}(\Z)$ such that $A^tfA=A$ and $B^tfB=f$.  Assume first that $f$ is anti-symmetric so $v^tfv=0$. Lemma \ref{lemtec2} implies that $v^tfe_i=0$ for every $1 \le i \le n-1$. 
Since $v,e_1,\ldots,e_{n-1}$ span $\Q^n$, the set
$\{w \in \Q^n \mid w^tfu=0 \text{ for all } u\in \Q^n\}$  is a non-trivial proper subspace of $\Q^n$. This subspace is preserved by $H$, a contradiction to the irreducibility of $H$. Thus, $f$ is not anti-symmetric and the quadratic form $(u_1,u_2):={u_1}^t(f+f^t)u_2$ has the desired properties. 
Lemma \ref{lemtec2} implies that $v^t(f+f^t)=\left(\begin{array}{cccc}
0 & \cdots & 0 & c
\end{array}\right)$ for some $c \ne 0$ so $(3)$ holds.

Since $H$ is primitive, every non-trivial normal subgroup of it is irreducible. Thus in order to prove $(1)$ it is enough to show that the normal subgroup $H_r:=\langle B^{-i}CB^i\mid i \in \mathbb Z\rangle$ preserves the $\mathbb Z$-lattice spanned by $v,Bv,\ldots, B^{n-1}v$.  Property $(3)$ implies that $Cu=u-2\frac{(v,u)}{(v,v)}v$ for every $u \in \Z^n$.
Since $B$ preserves the quadratic form,
$B^{-i}CB^{i}u=u-2\frac{(B^{-i}v,u)}{(B^{-i}v,B^{-i}v)}B^{-i}v$ for every $u \in \Z^n$.
Thus, $H_r$ preserves the $\Z$-lattice spanned by $v,Bv, 
\ldots, B^{n-1}v$ and the proof of $(1)$ is complete. The uniqueness part of $(2)$ now follows from $(1)$ together with Lemma \ref{lemtec2}.

For $1 \le i \le n-1$ the $(n-i)$-th and the $(n-i+1)$-th coordinates of bottom row of $B^i$ are equal to 1 and all the other coordinates of this row are zero. Thus, the last coordinate of $B^iv$ is $a_1+1$ if $i=1$ and $a_{i}+a_{i-1}$ if $2 \le i \le n-1$. From $(3)$ we get $(v,B^iv)=f_{1,i}$ for every $1 \le i \le n$. Property $(4)$ follows
from the fact the $f_{i,j}$ depends only on $|i-j|$ since $B$ preserves the quadratic form.
\end{proof}
We now record the invariant quadratic form for two cases that we consider in Section~\ref{thinguys}.
\begin{corol}\label{corol form 1} Let $(
\cdot,\cdot)$ be the normalized quadratic form preserved by $\mathcal{N}_1(1,n,n)$. If $1 \le i,j \le n$ then $$(B^iv,B^jv)=
\left\{\begin{array}{ccc}
-2 & \text{ if }& |i-j|=0\\
-3 & \text{ if }& |i-j|=1\\
-4 & \text{ if }& |i-j|\ge 2
\end{array}\right.$$
\end{corol}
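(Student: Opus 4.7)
The strategy is to apply Proposition~\ref{prop form}(4) directly, since that formula already computes the Gram matrix of $v, Bv, \ldots, B^{n-1}v$ in terms of the coefficients of the characteristic polynomial of $A$. The only real work is to identify, for the family $\mathcal{N}_1(1,n,n)$, which of the two hypergeometric generators plays the role of $A$ (whose characteristic polynomial must have constant term $+1$) and which plays the role of $B$ (constant term $-1$, and coefficients satisfying $B_i=(-1)^i$), and then to read off $a_1,\ldots,a_{n-1}$.

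For $\mathcal{N}_1(1,n,n)$ with $n$ odd we have $j=1$, $k=n$, $\ell=1$, so $P_{n,n}(z)=(z^{n+1}-1)/(z-1)$ and $Q_{n,n}(z)=z^{n}-1$. Feeding the resulting $\alpha,\beta$ from Theorem~\ref{theo} into Theorem~\ref{levthm} produces two characteristic polynomials
\[
\frac{z^{n+1}-1}{z+1}\qquad\text{and}\qquad (z+1)(z^{n-1}+z^{n-2}+\cdots+1),
\]
with constant terms $-1$ and $+1$ respectively. So one takes the second as $P(x)$ and the first as $Q(x)$. Expanding gives $P(x)=z^{n}+2z^{n-1}+\cdots+2z+1$, hence $a_i=2$ for $1\le i\le n-1$; and $Q(x)=z^{n}-z^{n-1}+\cdots-z-1$, hence $b_i=(-1)^i$. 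In particular the alternating-sign hypothesis $B_i=(-1)^i$ of Proposition~\ref{prop form}(4) holds.

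Substituting $a_i=2$ into the formula then gives $f_{i,j}=-2$ for $|i-j|=0$, $f_{i,j}=-1-a_1=-3$ for $|i-j|=1$, and $f_{i,j}=-a_k-a_{k-1}=-4$ for $|i-j|=k\ge 2$, which are precisely the three cases in the corollary. Because $B$ preserves the form, $(B^iv,B^jv)=(v,B^{j-i}v)$ depends only on $|i-j|$, so this finishes the proof. The only delicate point is the book-keeping of conventions: one must verify that $(z+1)(z^{n-1}+\cdots+1)$ (and not its companion) is the polynomial with constant term $+1$, so as to justify calling it $P(x)$ in the sense of Proposition~\ref{prop form}. Once this normalization is pinned down, the corollary is just a substitution into part~(4) of that proposition.
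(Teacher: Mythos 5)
Your proof is correct and takes the same route the paper intends: Corollary~\ref{corol form 1} is stated as an immediate consequence of Proposition~\ref{prop form}(4), and your work — identifying the two characteristic polynomials for $\mathcal{N}_1(1,n,n)$, assigning the one with constant term $+1$ the role of $P$ and checking the alternating-sign hypothesis $B_i=(-1)^i$ for the other, then substituting $a_i=2$ — is exactly the computation the paper leaves implicit.
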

\begin{corol}\label{corol form 3} Let $(
\cdot,\cdot)$ be the normalized quadratic form preserved by $\mathcal{N}_1(3,n,n)$. If $1 \le i,j \le n$ then 
$$(B^iv,B^jv)=\left\{\begin{array}{ccc}
-2 &\text{if}& |i-j|=0 \\
-4 & \text{if}&|i-j|=1 \\
-8 & \text{if}& |i-j|=2 \text { or } |i-j|=n-1\\
-11 & \text{if}& |i-j|=3 \text { or }  |i-j|=n-2\\
-12 & \text{otherwise} 
\end{array}\right.$$
\end{corol}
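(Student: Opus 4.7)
The plan is to apply Proposition~\ref{prop form}(4) to the family $\mathcal{N}_1(3,n,n)$, once the two hypergeometric generators are correctly identified with the matrices $A$ and $B$ of Section~\ref{quadform}. With $j=3$, $k=n$, $\ell=1$, the generator whose eigenvalues are $\{e^{2\pi i\alpha_\nu}\}$ has characteristic polynomial
$$\frac{(x-1)\,P_{n,n}(x)}{x+1}=\frac{x^{n+1}-1}{x+1}=x^n-x^{n-1}+x^{n-2}-\cdots+x-1,$$
whose coefficients are exactly $(-1)^i$, so this generator plays the role of the ``$B$'' of Section~\ref{quadform} and we are directly in the hypothesis of Proposition~\ref{prop form}(4). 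The other generator is then the $A$ of Section~\ref{quadform}, with characteristic polynomial
$$P(x)=\frac{(x+1)\,Q_{n,n}(x)}{x-1}=(x+1)(x^2+x+1)(x^{n-3}+x^{n-4}+\cdots+1).$$

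Next I will read off the coefficients $a_1,\ldots,a_{n-1}$ of $P(x)=x^n+a_1x^{n-1}+\cdots+a_{n-1}x+1$ by two successive convolutions. Convolving the sequence $(1,1,1)$ (from $x^2+x+1$) against the length-$(n-2)$ sequence $(1,1,\ldots,1)$ (from $x^{n-3}+\cdots+1$) produces $(1,2,3,3,\ldots,3,2,1)$ of length $n-1$; convolving once more against $(1,1)$ (from $x+1$) replaces each adjacent pair by its sum and yields the palindromic sequence
$$(1,3,5,6,6,\ldots,6,5,3,1)$$
of length $n+1$. Hence $a_1=a_{n-1}=3$, $a_2=a_{n-2}=5$, and $a_k=6$ for every $3\le k\le n-3$.

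To finish, I invoke Proposition~\ref{prop form}(4): the inner product $(B^{i-1}v,B^{j-1}v)$ depends only on $|i-j|$, equalling $-2$ at $|i-j|=0$, $-1-a_1$ at $|i-j|=1$, and $-a_k-a_{k-1}$ at $|i-j|=k\ge 2$. Plugging in the coefficients above yields $-2,-4,-8,-11$ for $|i-j|=0,1,2,3$; the constant $-12$ throughout the middle range $4\le|i-j|\le n-3$; and, by the palindromicity of the $a_k$'s, the mirrored values $-11$ at $|i-j|=n-2$ and $-8$ at $|i-j|=n-1$. After the trivial reindexing between the basis $v,Bv,\ldots,B^{n-1}v$ used in Proposition~\ref{prop form} and the vectors $B^iv$ with $1\le i\le n$ appearing in the statement, this is precisely the claimed formula. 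The only slightly delicate point of the argument is the very first one, since the $\mathcal{N}_1$ family in Section~\ref{familyclassification} attaches the label ``$A$'' to the $\alpha$-generator whereas Section~\ref{quadform} fixes ``$A$'' to be the generator whose characteristic polynomial has constant term $+1$; once this bookkeeping is straightened out the rest of the proof is immediate.
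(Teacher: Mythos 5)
Your proof is correct and is exactly the derivation the paper intends: the corollary is stated without separate proof as an immediate consequence of Proposition~\ref{prop form}(4), applied after identifying the $\alpha$-generator of $\mathcal N_1(3,n,n)$ (characteristic polynomial $(x^{n+1}-1)/(x+1)$, coefficients $(-1)^i$) with the $B$ of Section~\ref{quadform} and reading off $a_1=a_{n-1}=3$, $a_2=a_{n-2}=5$, $a_k=6$ for $3\le k\le n-3$ from $(x+1)(x^2+x+1)(x^{n-3}+\cdots+1)$. The only blemish is a harmless miscount: the intermediate convolution $(1,2,3,\ldots,3,2,1)$ has length $n$, not $n-1$ (your final length $n+1$ is consistent with this), and the conclusion is unaffected.
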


\section{Cartan involutions}\label{cartaninvol}

\subsection{Hyperbolic reflection groups}\label{cartan1}

As noted in the Introduction we make crucial use of Vinberg's \cite{vin}, \cite{vin2} as well as Nikulin's \cite{nik} results concerning the size of groups generated by hyperbolic reflections.  Before reviewing this arithmetic theory we begin with quadratic forms and hyperbolic space over the reals.  A non-degenerate real quadratic form $f(x_1,\dots,x_n)$ with corresponding symmetric bilinear form $(\;,\;)$ is determined by its signature.  Our interest is in the case that $f$ has signature $(-1,1,\dots,1)$, which after a real linear change of variable can be brought to the form
\begin{equation}\label{eq3.1}
f(x_1,\dots,x_n)=-x_1^2+x_2^2+\cdots+x_n^2.
\end{equation}
The null cone $C$ of $f$ is $\{x\;|\; (x,x)=0\}$ as depicted in Figure~\ref{conefig}.

\begin{figure}[htp]
\centering
\includegraphics[height=50mm]{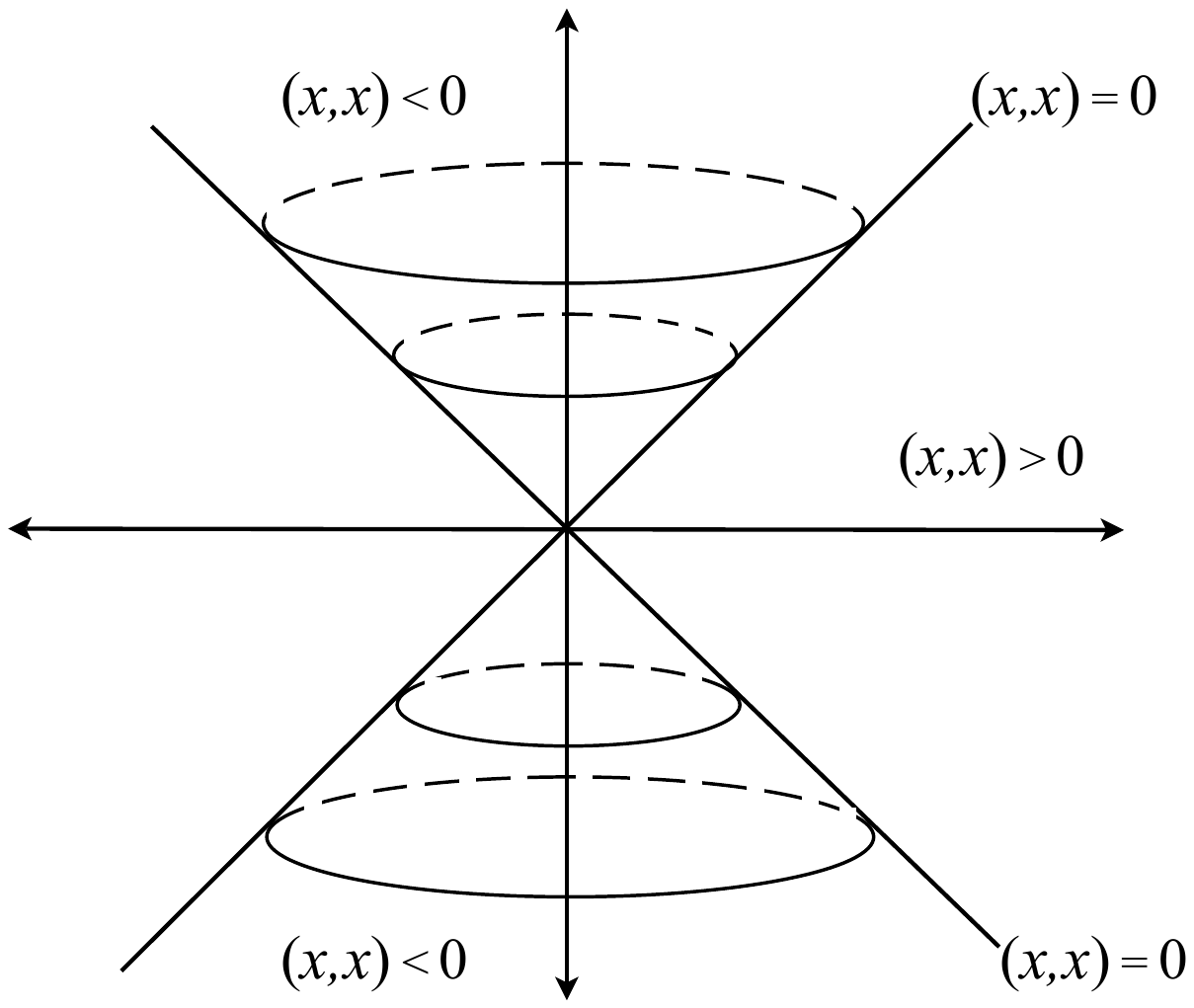}
\caption{}\label{conefig}
\end{figure}
$\mathbb R^n\backslash C$ consists of $3$ components, the outside, $(x,x)>0$, which consists of one component, and the inside, $(x,x)<0$, which consists of $2$ components: $x_1>0$ and $x_1<0$.  The quadrics $(x,x)=k$ where $k<0$ are two sheeted hyperboloids and either sheet, say the one with $x_1>0$ and $k=-2$, can be chosen as a model of hyperbolic $n-1$ dimensional space (we assume that $n\geq 3$), $\mathbb H^{n-1}$.  This is done by restricting the line element $ds^2=-dx_1^2+dx_2^2+\cdots+dx_n^2$ to $\mathbb H^{n-1}$.  The quadrics $(x,x)=k$ with $k>0$ are one sheeted hyperboloids.  If $g\in \textrm{O}_f(\mathbb R)$, the real orthogonal group of $f$, then $g$ preserves the quadrics, however it may switch the sheets of the two sheeted hyperboloid.  If it preserves each of these, then $g$ acts on $\mathbb H^{n-1}$ isometrically, while if $g$ switches the sheets then $-g$ acts isometrically on $\mathbb H^{n-1}$.  In either case we obtain an induced isometry.  Of particular interest to us are linear reflections of $\mathbb R^n$ given by $v$'s with $(v,v)\not=0$ (i.e. not on the null cone).  For such a $v$ denote by $r_v$ the linear transformation
\begin{equation}\label{eq3.2}
r_v(y)=y-\frac{2(v,y)}{(v,v)}v.
\end{equation}
It is easy to see that $r_v\in \textrm{O}_f$, that it is an involution, and that it preserves the components of the complement of the light cone if $(v,v)>0$, and switches them if $(v,v)<0$.  In the first case, the induced isometry of $\mathbb H^{n-1}$ is a hyperbolic reflection.  The fixed point set of $r_v$ in $\mathbb R^n$ is $v^\bot=\{y\; |\; (v,y)=0\}$ and this intersects $\mathbb H^{n-1}$ in an $n-2$-dimensional hyperbolic hyperplane and the induced isometry is a (hyperbolic) reflection about this hyperplane.  In the second case, $(v,v)<0$, the induced isometry $-r_v$ fixes the line $\ell_r=\{\lambda v\; |\; \lambda\in\mathbb R\}$ and this line meets $\mathbb H^{n-1}$ in a point $p_v$.  The induced isometry of $\mathbb H^{n-1}$ in this case inverts geodesics in $p_v$ and is a Cartan involution.  These two involutive isometrics of $\mathbb H^{n-1}$ are quite different.  In the case of a discrete group of motions generated by hyperbolic reflections, there is a canonical fundamental domain, namely the connected components of the complement of the hyperplanes corresponding to all the reflections.  This is the starting point to Vinberg's theory of reflective groups.  For groups generated by Cartan involutions there is no apparent geometric approach.

We turn to the arithmetic theory.  Let $L\subset \mathbb Q^n$ be a quadratic lattice, that is a rank $n$ $\mathbb Z$-module equipped with a nondegenerate integral symmetric bilinear form $(\;,\;)$, whose associated quadratic form $f(x)=(x,x)$ is rational.  The dual lattice $L^\ast$ of $L$ is given by $$L^{\ast}=\{x\in\mathbb Q^n\; |\; (x,y)\in \mathbb Z \mbox{ for all } y\in L\}.$$  It contains $L$ as a finite sublattice and the invariant factors of $L$ are defined to be the invariants of the torsion module $L^\ast/L$.  The product of these invariants is equal to $\pm d(L)$, where $d(L)$ is the discriminant of $L$ (that is $\det (F)$ where $F$ is an integral matrix realization of $f$).  We assume that $L\otimes \mathbb R$ is hyperbolic and of signature $(-1,1,\dots,1)$.  The $\mathbb Z$ analogue of the null cone is $C(L)=\{x\in L\;| \; (x,x)=0\}$ and that of the quadrics is $V_k(L):=\{x\in L\;|\; (x,x)=k\}$ for $k\in \mathbb Z$, $k\not=0$.  We assume throughout that $f$ is isotropic over $\mathbb Q$ which means that $C(L)\not=\varnothing$ (and is in fact infinite).  The group of integral automorphs of $L$ is denoted by $\textrm{O}(L)$ or $\textrm{O}_f(\mathbb Z)$.  A primitive vector $v$ in $V_k(L)$ is called a $k$-root if $\frac{2v}{k}\in L^\ast$.  In this case the linear reflection $r_v$ of $\mathbb Q^n$ given by (\ref{eq3.2}) is integral and lies in $\textrm{O}(L)$.  For $k=\pm 1$ or $\pm 2$, $\frac{2v}{k}$ is always in $L^\ast$ and we will have occasions where $k=4$ yields root vectors as well.  As discussed above, the root vectors with $k>0$ induce hyperbolic reflections on $\mathbb H^{n-1}_f$ (which we choose to be one of the components of $V_{-2}(\mathbb R)=\{x\in L\otimes\mathbb R\; |\; (x,x)=-2\}$) while for $k<0$ they induce Cartan involutions.

The Vinberg reflective group $R(L)$ is the subgroup of $\textrm{O}(L)$ generated by all hyperbolic reflections coming from root vectors with $k>0$.  It is plainly a normal subgroup of $\textrm{O}(L)$ and the main result in \cite{vin2} asserts that if $n\geq 30$ then $|\textrm{O}(L)/R(L)|=\infty$.  Vinberg's proof is based on studying the fundamental domain in $\mathbb H^{n-1}$ of $R(L)$ and relating it to the cusps of $\textrm{O}(L)\backslash \mathbb H^{n-1}$ which correspond to null vectors $w\in C(L)$.  In particular it uses the assumption that $f$ is isotropic.

Nikulin's results \cite{nik} are concerned with the case that $L$ is even, that is $(x,x)\in2\mathbb Z$ for all $x\in L$, and the subgroup $R_2(L)$ generated by all the $2$-root vectors in $L$ (note he chooses $f$ to have signature $(1,-1,-1,\dots,-1)$ so that his $-2$-root vectors are our $2$-root vectors).  $R_2(L)$ is a subgroup of $R(L)$ and it is also normal in $\textrm{O}(L)$.  For these he gives a complete classification of all $L$'s for which $\textrm{O}(L)/R_2(L)$ is finite.  There are only finitely many such and for $n\geq 5$ the list is quite short.  $L$ is called two elementary if $L^\ast/L$ is $(\mathbb Z/2\mathbb Z)^a$ for some $a$.  If $L$ is not two elementary and $n\geq 5$ and odd (the last condition on $n$ is what is of interest to us for our applications) then $|\textrm{O}(L)/R_2(L)|=\infty$ unless $L$ is isomorphic to $U\oplus K$ and $K$ is one of

$$A_3, A_1\oplus A_2, A_1\oplus A_2^2, A_1^2\oplus A_3, A_2\oplus A_3, A_1\oplus A_4,$$ 

\vspace{-0.2in}

$$A_5, D_5, A_7, A_3\oplus D_4, A_2\oplus D_5, D_7, A_1\oplus E_8, A_3\oplus E_8$$

\noindent or $L$ is isomorphic to 
$$U(4)\oplus A_1^3, \langle -2^k\rangle\oplus D_4 \mbox{ for } k=2,3,4, \mbox{ or } \langle-2,3\rangle \oplus A_2^2.$$

Here the positive definite ($2$-reflective) lattices $A_n$, $D_n$, $E_n$ are the standard ones (see \cite{cs}) while $U=\left[\begin{array}{cc}0&1\\1&0\end{array}\right]$ and $U(4)=\left[\begin{array}{cc}0&4\\4&0\end{array}\right]$.

As a consequence unless $L$ is two elementary or its invariant factors are among the list below, $|\textrm{O}(L)/R_2(L)|=\infty.$

\begin{table}[htp]
\centering
    \begin{tabular}{| l | p{6cm} |}
    \hline
    Dimension & Factors\\ \hline
    $5$ & $2\cdot 3, 4, 2\cdot 3^3, 4^2\cdot 2^3, 4\cdot 4, 4\cdot 8, 4\cdot 16$ \\ \hline
    $7$ & $2\cdot 3\cdot 3, 2\cdot 2\cdot 4, 3\cdot 4, 2\cdot 5, 3\cdot 2, 4$ \\ \hline
    $9$ & $8, 4\cdot 4, 3\cdot 4, 4, 3\cdot 3$ \\ \hline
    $11$ & none \\ \hline
    $13$ & $4$ \\ \hline
    $\geq 15$ & none \\ 
    \hline
    \end{tabular}
\caption{}
\label{table2}
\end{table}

\subsection{Pseudo-reflections and Cartan involutions}

The almost interlacing condition on the $(\alpha,\beta)$'s ensures that $H(\alpha,\beta)$ lies in an orthogonal group of a quadratic form $f$ of signature $(p,q)$, i.e. $(\stackrel{p}{1,1,\dots,1},\stackrel{q}{-1,-1,\dots,-1})$ with $|p-q|=n-2$.  The form $f$ is unique up to a scalar multiple and we normalize $f$ so that $f(v,v)=-2$ where $v$ is the vector given in (\ref{Candv}).  In order to determine what type of involution these critical pseudo-reflections $r_v$ induce on hyperbolic space we need to determine the number $p-q$, when $f$ is so normalized.  This can be done by keeping track of the signs in the calculations in Proposition~4.4 and Theorem~4.5 in \cite{bh}.  With $u=v$ in equation (4.5) in \cite{bh} (note there is a misprint there: it should be $D(x)$ rather than $D(u)$), $\eta=1$, and $c=-1$ we have 
\begin{equation}\label{eq1}
D(x)=(h_1-I)x=f(x,u)u
\end{equation}
Hence from (4.7) of \cite{bh} we have that for their orthogonal basis $u_j$ of $\mathbb R^n$, $j=1,2,\dots,n$,
\begin{equation}\label{eq2}
f(u_j,u_j)=i\exp\left[2\pi i\left(\frac{\beta_1+\beta_2+\beta_n-\alpha_1-\alpha_2-\alpha_n}{2}\right)\right]\cdot 2\sin\pi(\beta_j-\alpha_j)\prod_{k\not=j}\frac{\sin\pi(\beta_k-\alpha_j)}{\sin\pi(\alpha_k-\alpha_j)}
\end{equation}
where we have assumed for simplicity that the $\alpha_k$'s and $\beta_k$'s are distinct.

To determine the signs there are two cases:

\begin{itemize}
\item[(1)] $\alpha_1=0$:

\noindent We write $(\alpha_1,\alpha_2,\dots,\alpha_n)$ as $(0,t_1,\dots,t_m,t_{-m},\dots,t_{-1})$ with $2m+1=n$.

\begin{center}
\includegraphics[width=80mm]{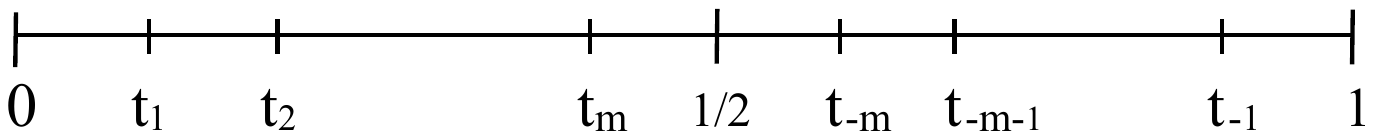}
\end{center}

\noindent Here $t_j=1-t_{-j}$ for $j=1,\dots,m$ (which corresponds to self duality of $\alpha,\beta$).

Writing $\beta$'s similarly as $(\beta_1,\dots,\beta_n)=(s_1,s_2,\dots,s_m,\frac{1}{2},s_{-m},\dots,s_{-1})$ with $s_j=1-s_{-j}$ for $j=1,\dots,m$, $2m+1=n$, we find that for the $\alpha$'s and $\beta$'s to almost interlace the configuration must be

\begin{center}
\includegraphics[width=80mm]{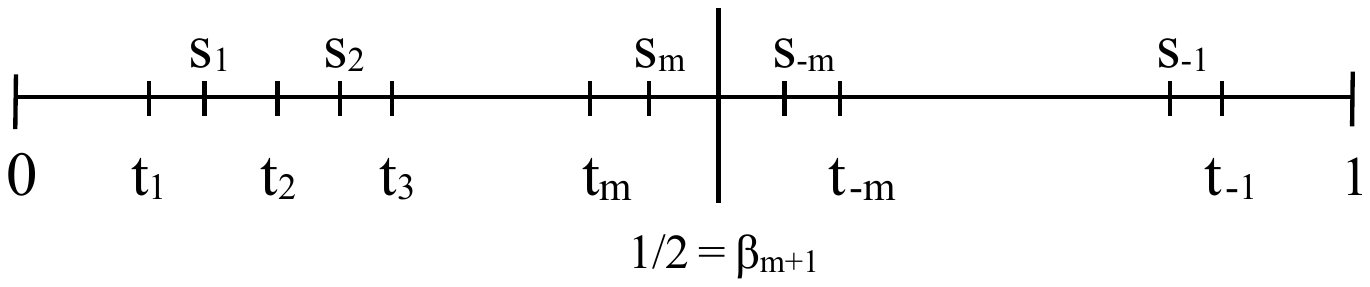}
\end{center}

\noindent The factor $i\exp[\;]$ in (\ref{eq2}) is therefore $i\exp[2\pi i/4]=-1$ and hence $f(u_1,u_1)< 0$.  Between every pair $\alpha_j,\alpha_{j+1}$ but the first one, there are one or three $\beta$'s and hence $f(u_2,u_2)>0, f(u_3,u_3)>0,\dots,f(u_n,u_n)>0$.  That is $f$ has signature $(1,1,\dots,1,-1)$.

\item[(2)] $\beta_1=0$:

\noindent Again we can write $\alpha=(\alpha_1,\dots,\alpha_n)=(t_1,\dots,t_m,\frac{1}{2},t_{-m},\dots,t_{-1})$ and $\beta=(\beta_1,\dots,\beta_n)=(0,s_1,s_2,\dots,s_m,s_{-m},s_{-m-1},\dots,s_{-1})$, with $t_{-m}=1-t_m$ and $s_{-m}=1-s_m$.

To almost interlace we must have the configuration

\begin{center}
\includegraphics[width=82mm]{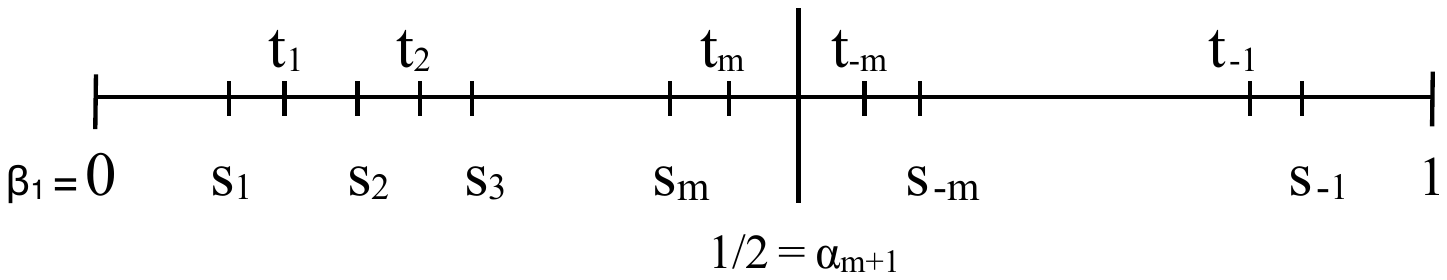}
\end{center}

\noindent Hence the factor $i\exp[\;]$ in (\ref{eq2} is this time $i\exp[-2\pi i/4]=1$.  Hence $f(u_1,u_1)>0$, as are $f(u_2,u_2),\dots, f(u_m,u_m)$.  Since there are no $\beta$'s in $(t_m,1/2)$ and $(1/2, t_{-m})$, we have that $f(u_m,u_m)<0$ and $f(u_{m+1},u_{m+1})>0$ as are the rest of the $f(u_j,u_j)$ for $m+1<j\leq n$.  Hence the signature of $f$ is again $(1,1,\dots,-1)$.
\end{itemize}

Thus in all cases (that is the two above as well as those where some of the $\alpha$'s coincide in which case one deduces the signs by a limiting argument) we have that if $f$ is normalized so that $f(v,v)=-2$ then $f$ has signature $(1,1,\dots,1,-1)$.  In particular $v$ always lies inside the null cone and $r_v$ induces a Cartan involution.  This is in sharp contrast to the involutions in $\textrm{O}_f(\mathbb Z)$ that arise in the theory of $K3$-surfaces in \cite{nik} and \cite{vin} which in our normalization have $f(v,v)=2$ and hence induce hyperbolic reflections.

\subsection{An example of a non-thin Cartan subgroup}
The goal of this short section is to give an example of a hyperbolic lattice $\textrm{O}(L)$ whose reflection group $R_2(L)$ is thin while the Cartan subgroup $R_{-2}(L)$ is of finite index.

Define a quadratic form 
\begin{equation}
f(x_1,x_2,x_3,x_4):=2x_1^2+5x_2^2+10x_3^2-x_4^2
\end{equation}
and let $\textrm{O}(L)$ be the orthogonal group of $f$ over $\Z$.
The following facts can be found 
in pages 474--477 of \cite{EGM}.
\begin{itemize}
\item[Fact 1] $R_2(L)$ is thin.
\item[Fact 2] $R_2(L)$ is generated by 4 reflections
$\sigma_1,\sigma_2,\sigma_3,\sigma_4$ and $\sigma_2,\sigma_3$
belong to the center of $R_2(L)$. Thus, $R_2(L)$ is isomorphic to a quotient of $C_2 \times C_2 \times \Di_\infty$ where $\Di_\infty$ is the infinite dihedral group.
In particular any element of $R_2(L)$ which has infinite 
order generates a finite index subgroup. 
\item[Fact 3] $\textrm{O}(L)$ is isomorphic to the semidirect product 
$R_2(L) \mathbb{o} \Di_\infty$. Under this isomorphism 
$\Di_\infty$ is generated by the following two involutions:
$$
g:=\left(\begin{array}{cccc}
1 & 0 & 0 & 0 \\
0 &\minus2 &\minus2 & 1 \\
0 &\minus1 &\minus3 & 1 \\
0 &\minus5 &\minus10& 4
\end{array}\right), \ \ \ \ \ 
h:=\left(\begin{array}{cccc}
\minus2 & 0 & \minus5 & 2 \\
0  & 1 &  0 & 0 \\
\minus1 & 0 & \minus6 & 2 \\
\minus4 & 0 & \minus20& 7
\end{array}\right).
$$
\end{itemize}

We start by showing that $R_{-2}(L)$ is not contained 
in $R_2(L)$. A direct computation shows that $\Q^4$ is decomposed as $V^+ \oplus V^-$ where 
$V^+:=\{v \in \Q^4\mid hv=v\}$ and $V^-:=\{v \in \Q^4\mid hv=-v\}$. In addition $u:=\left(\begin{array}{cccc}
1 & 1 & 1 & 4 \end{array}\right)^t\in V^+$ satisfies 
$f(u)=1$ so the reflection $r_u$ in integral.
Let $w$ be an element of $V^+$ orthogonal to $u$. Since 
$f|_{V^+}$ is not positive definite, $(w,w)<0$. Notice that:
\begin{equation}
\forall v \in V^-.\ hr_u(v)=-v,
\end{equation}
\begin{equation}
hr_u(u)=-u,
\end{equation}
\begin{equation}
hr_u(w)=w.
\end{equation}
Thus, the Cartan involution $r_w=hr_u$ is integral and does not belong to $R_2(L)$. 

Denote $r_1:=r_w$, $r_2:=r_w^{gh}$ and $\Lambda:=\langle r_1,r_2\rangle$. The Cartan involutions $r_1$ and $r_2$
have distinct images in $\textrm{O}(L)/R_2(L) \simeq \Di_\infty$. Thus, $\Lambda R_2(L)/R_2(L)$ is of finite index in $\textrm{O}(L)/R_2(L)$. In other words $R_2(L)\Lambda$ is of finite index in $\textrm{O}(L)$. 

By the second isomorphism theorem,
\begin{equation}
\Lambda / (\Lambda \cap R_2(L)) \simeq \Lambda R_2(L)/R_2(L).
\end{equation}
Every proper quotient of $\Di_\infty$ is finite so 
$\Lambda \cap R_2(L)$ is trivial. In particular, 
the group generated by $\Lambda$ and $R_2(L)$ is in fact a semidirect product. Assume for the moment that $\sigma_1$
not commute with $r_1$. Then $r_3:=\sigma_1 r_1 \sigma_1 \in R_2(L) \Lambda$
is a Cartan involution different form $r_1$ so 
$r_1r_3$ has infinite order. Moreover, since $R_2(L)$ is normal in $\textrm{O}(L)$, $r_1r_3 \in R_2(L)$ 
so by Fact 2 the group generated by $r_1r_3$ has finite index in $R_2(L)$. Altogether we get that $\langle r_1,r_2,r_3 \rangle$ has finite index in $R_2(L)\Lambda$ and in $\textrm{O}(L)$. 

Finally, $\sigma_1$ and $r_1$ do not commute since

$$
r_1:=\left(\begin{array}{cccc}
\minus6 & \minus10 & \minus25 & 10 \\
\minus4 &\minus9 &\minus20 & 8 \\
\minus5 &\minus10 &\minus26 & 10 \\
\minus20 &\minus40 &\minus100& 39
\end{array}\right), \ \ \ \ \ 
\sigma_1:=\left(\begin{array}{cccc}
\minus1 & 0 & 0 & 0 \\
0  & 1 &  0 & 0 \\
0 & 0 & 1 & 0 \\
0 & 0 & 0& 1
\end{array}\right).
$$

\section{The minimal distance graph and thin families}\label{mindistgraph}
\subsection{The distance graph}\label{graphprop}

In this section we derive a sufficient condition for a subgroup $\Delta$ of $\textrm{O}(L)$ which is generated by Cartan involutions, to have finite image in $\textrm{O}(L)/R_k(L)$ with $k=2$ or $4$.  Here $L$ is an integral quadratic lattice of signature $(-1,1,\dots,1)$ and our notation is the same as in Section~\ref{cartaninvol}.  We say $L$ is \emph{even} if $(x,x)=f(x)$ is even for all $x\in L$ but $f(x,y)$ is odd for some $x,y\in L$.  If the latter fails, that is $f(x,y)/2$ is integral for all $x,y\in L$ and $f(x,x)/2$ is odd for some $x\in L$ (which will always be the case for us) then $f/2$ is odd and if there is no room for confusion we say that $f$ itself is \emph{odd}.  If $f$ is even we will make use of the hyperbolic reflection group $R_2(L)$, while if $f$ is odd we will use $R_4(L)$.  The following diophantine lemma is crucial for what follows.

\begin{lemma}\label{basic} If $u,w \in \Z^n$ satisfy $(u,u)=(w,w)=-2$ and $(u,w)=-3$ then 
\begin{itemize}
\item[(i)] If $f$ is even and $u$ and $w$ are in $V_{-2}(L)$ with $(u,w)=-3$ then
\begin{itemize}
\item[(1)] $(u-w,u-w)=(u-2w,u-2w)=2$.
\item[(2)] $r_ur_w=r_{u-w}r_{u-2w}$.
\item[(3)] $r_{u-w}(u)=w$.
\end{itemize}
\item[(ii)] If $f$ is odd (i.e. $(\;,\;)/2$ is integral and odd) and $u$ and $w$ are in $V_{-2}(L)$ with $(u,w)=-4$ then 
\begin{itemize}
\item[(1)] $(u-w,u-w)=(u-3w,u-3w)=4$.
\item[(2)] $r_ur_w=r_{u-w}r_{u-3w}$.
\item[(3)] $r_{u-w}(u)=w$.
\end{itemize}
\end{itemize}
\end{lemma}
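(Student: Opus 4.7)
The plan is to prove all three claims by direct bilinear computation, treating the even and odd cases in parallel since only the numerical inputs change.

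For part (1) in each case, I would just expand using bilinearity. In the even case with $(u,u)=(w,w)=-2$ and $(u,w)=-3$:
\begin{equation*}
(u-w,u-w)=-2-2(-3)+(-2)=2,\qquad (u-2w,u-2w)=-2-4(-3)+4(-2)=2,
\end{equation*}
and analogously in the odd case with $(u,w)=-4$ one gets $(u-w,u-w)=(u-3w,u-3w)=4$. In particular the vectors $u-w$ and $u-2w$ (resp.\ $u-3w$) lie in $V_2(L)$ (resp.\ $V_4(L)$) and, using the hypothesis on $L$, define integral reflections $r_{u-w}$, $r_{u-2w}$, $r_{u-3w}\in\textrm{O}(L)$.

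For part (3), I would compute $r_{u-w}(u)=u-\tfrac{2(u-w,u)}{(u-w,u-w)}(u-w)$. Plugging in $(u-w,u)=(u,u)-(w,u)=1$ (even) or $=2$ (odd) against $(u-w,u-w)=2$ or $4$ respectively, the coefficient of $(u-w)$ is $1$, giving $r_{u-w}(u)=u-(u-w)=w$.

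The main step is part (2). My observation is that all four reflections $r_u,r_w,r_{u-w},r_{u-2w}$ (resp.\ $r_{u-3w}$) fix the hyperplane $\{u,w\}^{\perp}$ pointwise, so both products act as the identity there. Hence it suffices to check that the two sides agree on the two-dimensional subspace $\Span(u,w)$, which I will do by evaluating on the basis $\{u,w\}$. In the even case one computes
\begin{equation*}
r_ur_w(u)=r_u(u-3w)=-u-3(w-3u)=8u-3w,\qquad r_ur_w(w)=r_u(-w)=3u-w,
\end{equation*}
and separately, using $(u-w,u)=1$, $(u-w,w)=-1$, $(u-2w,u)=4$, $(u-2w,w)=1$ together with $(u-w,u-w)=(u-2w,u-2w)=2$:
\begin{equation*}
r_{u-2w}(u)=-3u+8w,\qquad r_{u-2w}(w)=-u+3w,
\end{equation*}
and then $r_{u-w}$ applied to these yields $8u-3w$ and $3u-w$ respectively, matching $r_ur_w$. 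The odd case is entirely analogous with the numerics $(u-w,u)=2$, $(u-w,w)=-2$, $(u-3w,u)=10$, $(u-3w,w)=2$ and $(u-w,u-w)=(u-3w,u-3w)=4$, yielding the same agreement.

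There is no real obstacle: everything reduces to bilinear arithmetic once one makes the reduction to $\Span(u,w)$. The only point requiring a small amount of care is tracking the division by $(v,v)$ in the reflection formula in the odd case (where $(v,v)=4$ rather than $2$), which is what forces the coefficient $3$ instead of $2$ in the second reflection — this is precisely the arithmetic fact that motivates the distinction between the even and odd parts of the lemma.
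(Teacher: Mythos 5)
Your proof is correct and follows essentially the same route as the paper's: part (1) by direct bilinear expansion, and parts (2) and (3) by observing that all the reflections involved fix $\langle u,w\rangle^{\perp}$ pointwise and then verifying the identities on the basis $\{u,w\}$ of the two-dimensional span (the paper records the same evaluations as $2\times 2$ matrices, but the computation is identical). All of your intermediate inner products and coefficients check out in both the even and odd cases.
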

\begin{proof}

Part (1) of both (i) and (ii) is immediate.  As for parts (2) and (3) of (i), note that since $r_u,r_w,r_{u-w},$ and $r_{u-2w}$ all fix the orthogonal complement $\langle u,w\rangle^\perp$ of the span of $u$ and $w$, it suffices to check (2) on the two dimensional space $\langle u,w \rangle$.  That is to check the identity on $u$ and $w$.  Now a direct calculation shows that 
\begin{align*}
&r_u(u)=-u, \;\; r_u(w)=w-3u\\
&r_w(u)=u-3w, \;\; r_w(w)=-w\\
&r_{u-w}(u)=w, \;\; r_{u-w}(w)=u
\end{align*}
and
\begin{align*}
&r_{u-2w}(u)=-3u+8w, \;\; r_{u-2w}(w)=-u+3w
\end{align*}
Hence with respect to the basis $u,w$ of $\langle u,w\rangle$ we have
\begin{equation*}
r_u=\left[\begin{array}{rr}
\minus 1& \minus 3\\
0& 1
\end{array}\right], \;\;
r_w=\left[\begin{array}{rr}
1&0\\
\minus 3& \minus 1
\end{array}\right], \;\;
r_{u-w}=\left[\begin{array}{rr}
0&1\\
1& 0
\end{array}\right], \;\;
r_{u-2w}=\left[\begin{array}{rr}
\minus 3& \minus 1\\
8& 3
\end{array}\right]
\end{equation*}
and parts (2) and (3) of (i) follow.  Similarly for part (2) and (3) of (ii).  This time in the basis $u,w$ of $\langle u,w\rangle$ we have that
\begin{equation*}
r_u=\left[\begin{array}{rr}
\minus 1& \minus 4\\
0& 1
\end{array}\right], \;\;
r_w=\left[\begin{array}{rr}
1&0\\
\minus 4& \minus 1
\end{array}\right], \;\;
r_{u-w}=\left[\begin{array}{rr}
0&1\\
1& 0
\end{array}\right], \;\;
r_{u-3w}=\left[\begin{array}{rr}
\minus 4& \minus 1\\
15& 4
\end{array}\right]
\end{equation*}
and hence (2) and (3) of (ii) hold.
\end{proof}
\begin{remark}
The factorization of $r_ur_w$ as a product of two \emph{integral} hyperbolic reflections in part (2) of both (i) and (ii) in the previous lemma has its source in the binary integral quadratic forms $f_1=-2x^2-6xy-2y^2$ and $f_2=-2x^2-8xy-2y^2$ being reciprocal in that $f_1$ and $f_2$ are integrally equivalent to $-f_1$ and $-f_2$, respectively.  As shown in \cite{sa} this property is quite rare and one can check directly (or by the classification in \cite{sa}) that $x^2+kxy+y^2$ where $k\geq 3$ is reciprocal if and only if $k=3$ or $4$ and these two cases correspond to $f_1$ and $f_2$ above.
\end{remark}
Lemma~\ref{basic} leads us to the definition of the minimal distance graph associated with $f$.
\begin{dfn} The minimal distance graph $X_f$ of $f$ is the graph with vertex set $V_{-2}(L)$ (i.e. the set of Cartan involutions coming from length $-2$ roots) and edge sets
$$E_f:=\{\{u,w\}\mid (u,w)=-3\} \text{ if } f \text{ is even}$$ and
$$E_f:=\{\{u,w\}\mid (u,w)=-4\} \text{ if } f \text{ is  odd}.$$
\end{dfn}
The name minimal distance graph comes from the fact that if $u$ and $w$ are in $V_{-2}(L)$ then as points of hyperbolic space $\mathbb H^{n-1}\cong V_{-2}(L)\otimes \mathbb R$ the distance from $u$ to $w$ is equal to $\cosh^{-1}\left(\frac{-(u,w)}{2}\right)$.  Hence if $(u,w)=-3$ when $f$ is even or $-4$ when $f$ is odd, then clearly the integrality of $(\;,\,)$ on $L$ implies that $(u,w)$ has the minimal admissible distance as members of $V_{-2}(L)$.

\begin{prop}\label{lemma min dist graph} Let $S$ be a connected component of $X_f$, then the image of $\langle r_u \mid u \in S\rangle$ in 
$\textrm{O}(L)/R_2(L)$ (respectively $\textrm{O}(L)/R_4(L)$) is of order at most two.
\end{prop}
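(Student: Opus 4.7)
The plan is to show directly that for any two vertices $u,w$ in the same component $S$, the product $r_u r_w$ already lies in $R_2(L)$ (resp. $R_4(L)$), so that all the Cartan reflections $r_u$ with $u\in S$ have the same image modulo $R_2(L)$ (resp. $R_4(L)$). Since each $r_u$ is an involution, a subgroup in which all the given generators collapse to the same element has image of order at most two.

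First I would handle the single-edge case. Suppose $\{u,w\}$ is an edge of $X_f$, so $(u,w)=-3$ in the even case or $(u,w)=-4$ in the odd case. By part~(1) of Lemma~\ref{basic}, the vectors $u-w$ and $u-2w$ (resp.\ $u-w$ and $u-3w$) have length $+2$ (resp.\ $+4$), so they are $2$-roots (resp.\ $4$-roots) of $L$ and the corresponding reflections $r_{u-w}, r_{u-2w}$ (resp.\ $r_{u-w}, r_{u-3w}$) lie in $R_2(L)$ (resp.\ $R_4(L)$). Part~(2) of Lemma~\ref{basic} then gives the key identity
\[
r_u r_w \;=\; r_{u-w}\,r_{u-2w} \qquad \bigl(\text{resp.\ } r_u r_w = r_{u-w}\,r_{u-3w}\bigr),
\]
so $r_u r_w \in R_2(L)$ (resp.\ $R_4(L)$) whenever $\{u,w\}$ is an edge of $X_f$.

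Next I would pass from edges to arbitrary pairs in the component $S$ by telescoping. Given $u,w\in S$, connectivity of $S$ provides a path $u=u_0,u_1,\ldots,u_k=w$ whose consecutive pairs are edges. Then
\[
r_u r_w \;=\; (r_{u_0}r_{u_1})(r_{u_1}r_{u_2})\cdots (r_{u_{k-1}}r_{u_k}),
\]
and each factor lies in $R_2(L)$ (resp.\ $R_4(L)$) by the previous step. Since $R_2(L)$ (resp.\ $R_4(L)$) is normal in $\textrm{O}(L)$, its image is a well-defined subgroup of $\textrm{O}(L)/R_2(L)$, and we conclude $r_u \equiv r_w \pmod{R_2(L)}$ for all $u,w\in S$.

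Finally, let $\bar r$ denote the common image of all $r_u$, $u\in S$, in $\textrm{O}(L)/R_2(L)$ (resp.\ $\textrm{O}(L)/R_4(L)$). Then the image of $\langle r_u\mid u\in S\rangle$ is the cyclic group $\langle \bar r\rangle$, and since $r_u^2=1$ we have $\bar r^2=1$, so this image has order at most two. The only subtle point is the telescoping step, which requires normality of $R_k(L)$ in $\textrm{O}(L)$ (so that equality modulo $R_k(L)$ is transitive on both sides); this is classical and already invoked in Section~\ref{cartan1}, so there is no real obstacle.
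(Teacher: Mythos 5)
Your proposal is correct and follows essentially the same route as the paper: factor $r_ur_w$ for adjacent vertices via Lemma~\ref{basic} as a product of two reflections in $R_2(L)$ (resp.\ $R_4(L)$), telescope along a path, and conclude that all generators share a single involutive image in the quotient. The only cosmetic difference is your closing remark about normality, which is not actually needed for the telescoping (closure of the subgroup suffices there); normality only matters for the quotient to be a group.
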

\begin{proof} Let $w_1,\ldots,w_m$ be a path in $X_f$. We have
$$r_{w_1}r_{w_m}=(r_{w_1}r_{w_2})(r_{w_2}r_{w_3})\cdots(r_{w_{m-1}}r_{w_m}),$$
which according to Lemma \ref{basic} in the case that $f$ is even (and a similar conclusion when $f$ is odd) gives
$$r_{w_1}r_{w_m}=(r_{w_1-w_2}r_{w_1-2w_2})(r_{w_2-w_3}r_{w_2-2w_3})\cdots (r_{w_{m-1}-w_m}r_{w_{m-1}-2w_m}).$$
Now the elements in each parenthesis above lie in $R_2(L)$, and hence $r_{w_1}r_{w_m}\in R_2(L)$ from which the proposition follows directly.
\end{proof}

Our certificate for a subgroup $\Delta$ of $\textrm{O}(L)$ generated by Cartan roots $v$ in $V_{-2}(L)$ to be infinite index in $\textrm{O}(L)$ is now clear.  If the generators of $\Delta$ all lie in the same connected component of $X_f$ then the image of $\Delta$ in $\textrm{O}(L)/R_k(L)$ where $k=2$ or $4$ is finite.  Hence if $|\textrm{O}(L)/R_k(L)|=\infty$ then $\Delta$ is thin.

We turn to the structure of the minimal distance graph $X_f$ which can be determined both theoretically and algorithmically. In particular we show that the question whether $u$ and $z$ belong to the same connected component of the minimal distance graph can be decided effectively. 

First note that $O(L)$ acts on $X_f$ isometrically (that is every 
$\gamma \in O(L)$ maps $X_f$ to $X_f$ as a graph isomorphism) and with finitely many orbits. These actions permutes the connected components of $X_f$ and thus there are only finitely many types of components. To examine the the components we identify a subgroup 
of $O(L)$ which stabilizes and acts transitively on a given component $\Sigma$.

\begin{prop}\label{prop fund dom of ref gp} Let $\Sigma$ be a connected component of $X_f$ and let $u$ be a vertex of $\Sigma$. Assume that $u$ is not the only vertex in $\Sigma$ and denote by $u_1,\ldots,u_k$ its immediate neighbors. Let 
\begin{equation}
G_u:=\langle r_{w_j}\mid w_j=u-u_j,j=1,2,\ldots,k\rangle.
\end{equation}
Then $G_u$ preserves $\Sigma$ and acts transitively. 
\end{prop}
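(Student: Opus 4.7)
The plan is to exploit two ingredients: Lemma~\ref{basic}(3), which says each generator of $G_u$ swaps $u$ with one of its neighbors, and the fact that $\textrm{O}(L)$ acts by graph automorphisms of $X_f$ (which is immediate because every $\gamma\in\textrm{O}(L)$ preserves $(\cdot,\cdot)$, hence permutes $V_{-2}(L)$ and respects the adjacency defined by $(u,w)=-3$ or $-4$). First I record the basic observation: by Lemma~\ref{basic}(3), applied either in the even case (with $(u,u_j)=-3$, so $(w_j,w_j)=2$) or in the odd case (with $(u,u_j)=-4$, so $(w_j,w_j)=4$), each generator of $G_u$ satisfies
\[
r_{w_j}(u)=u_j,\qquad j=1,\ldots,k.
\]
In both parities $r_{w_j}$ is an integral reflection (in the odd case $w_j/2\in L^{\ast}$ since $(\cdot,\cdot)/2$ is integral), so $G_u\subset\textrm{O}(L)$ as claimed.

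Next I would check that $G_u$ preserves $\Sigma$. Since $\textrm{O}(L)$ acts as graph automorphisms on $X_f$, every $\gamma\in G_u$ permutes the connected components. For each generator $r_{w_j}$ we have $r_{w_j}(u)=u_j\in\Sigma$, so $r_{w_j}(\Sigma)\cap\Sigma\neq\varnothing$, forcing $r_{w_j}(\Sigma)=\Sigma$. Passing to words in the generators gives $\gamma(\Sigma)=\Sigma$ for every $\gamma\in G_u$.

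For transitivity I argue by induction on the graph distance $d_\Sigma(u,v)$. The case $d=0$ is trivial, and the case $d=1$ is handled directly: if $v=u_j$ is an immediate neighbor then $r_{w_j}\in G_u$ maps $u$ to $v$. For the inductive step, suppose $v\in\Sigma$ is at distance $m\ge 2$, pick a path
\[
u=z_0,z_1,\ldots,z_{m-1},z_m=v
\]
in $\Sigma$, and use the induction hypothesis to produce $\gamma\in G_u$ with $\gamma(u)=z_{m-1}$. Since $\gamma$ is a graph automorphism, it sends the neighborhood $\{u_1,\ldots,u_k\}$ of $u$ bijectively onto the neighborhood of $z_{m-1}$; in particular $z_m=\gamma(u_{i_0})$ for some $i_0$. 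Then the element $\gamma\cdot r_{w_{i_0}}\in G_u$ satisfies
\[
(\gamma\cdot r_{w_{i_0}})(u)=\gamma(u_{i_0})=z_m,
\]
completing the induction.

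The argument is rather direct once Lemma~\ref{basic} is available; the only real subtlety is the inductive step, where it is cleaner to compose on the right with a generator (moving $u$ first to the appropriate neighbor $u_{i_0}$ and then translating by $\gamma$) rather than trying to conjugate $r_{w_{i_0}}$ into a reflection associated to an edge out of $z_{m-1}$. Everything else is formal consequence of the facts that $\textrm{O}(L)$ acts by isometries of $X_f$ and that $\Sigma$ is a connected component.
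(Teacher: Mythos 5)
Your proof is correct and follows essentially the same route as the paper's: each generator $r_{w_j}$ swaps $u$ with $u_j$ by Lemma~\ref{basic}(3), hence fixes the component $\Sigma$, and transitivity is obtained by induction on the graph distance using that $G_u\subset\textrm{O}(L)$ acts by graph automorphisms. The only cosmetic difference is that the paper takes $g$ with $g(\bar u)=u$ and writes the final element as $g^{-1}r_{w_j}$, whereas you take $\gamma=g^{-1}$ and compose on the right; these are the same argument.
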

\begin{proof} Lemma~\ref{basic} shows that $r_{w_j}\in O(L)$ and since 
$r_{w_j}$ switches $u$ and $u_j$ it follows that $r_{w_j}(\Sigma)=\Sigma$ and hence $G_u(\Sigma)=\Sigma$. To see that it acts transitively we proceed by induction on the distance of points in $\Sigma$ from $u$ (in the graph metric). The $u_j$'s with $d_X(u,u_j)=1$ are joined to $u$. By the choice of the $r_{w_j}$'s these points are in $G_u(u)$. For $u^* \in \Sigma$ and $d_X(u,u^*)=m>1$ pick $\bar{u} \in \Sigma$ for which $d_X(u,\bar{u})=m-1$ and $d_X(\bar{u},u^*)=1$. By induction there is a $g \in G_u$ such that
$g(\bar{u})=u$ and hence $g(u^*)=u_j$ for some $j$. Thus, $u^*=g^{-1}r_{w_j}(u)$ and $u^* \in G_u(u)$.
\end{proof}

We continue our examination of $\Sigma$ and $G_u$ with a general discussion of finding algorithmically a fundamental domain for any discrete group of motions generated by a finite number of reflections $t_1,\ldots,t_k$ (below we will apply this to $G_u$).
Now, given any discrete group $\Phi$ of hyperbolic motions of $\mathbb{H}^n$ which is generated by reflections it has a polyhedral fundamental domain as follows: Let $R$ be the set of \emph{all} reflections in $\Phi$. For $r \in R$ let $H_r$ denote the corresponding hyperplane fixed by $r$ (the ``wall'' of r). The set of $H_r$'s decompose $\mathbb{H}^n$ into connected convex polyhedral 
cells  each of which is a fundamental domain for $\Phi$. These cells are defined canonically and up to their motions by $\Phi$ are unique. We are interested in determining one of these cells algorithmically. Assuming that one can determine all the $H_r$'s which meet any compact set  (there being finitely many by discreteness), Vinberg \cite{vin} gives and effective algorithm to compute a cell of $\Phi$. In our case of $\Phi$ being generated by $t_1,\ldots,t_k$ we don't know apriori which $r$'s are in $R$ so we proceed with a somewhat different algorithm.

For a suitably chosen point $v \in \mathbb{H}^n$ the algorithm computes the cell $P_v$ of $\Phi$ which contains $v$ as an interior point. Here suitable means that $v$ does not lie in any of the $H_r's$ for $r \in R$. While such $v$ exist by discreteness of $\Phi$, choosing $v$ might look problematic since we don't have a list of the members 
of $R$. However all that we need is an upper bound on $R$ and this is something that usually comes with the source of $\Phi$ being discrete. So if $\Phi \subseteq O(L)$ then $R$ is contained in the set of all reflections in $O(L)$ which locally is easy to determine.
These provide us with effective upper bounds for $R$ and an ample supply of explicit points for $v$.  Given $\Phi \subseteq O(L)$ let
$\bar{R}$ be the set of reflections in $O(L)$ and define 
$Q:=\{H_r \mid r \in \bar{R}\}$. For $H \in Q$ let $d(v,H)$ be the distance in $\mathbb{H}^n$ form $v$ to $H$. There is a unique $w_H$ in $H$ which is the point in $H$ closest to $v_j$, so that 
$d(v,H)=d(v,w_H)$. The set $D:=\{d(v,H)\mid H \in Q\}$ is a discrete subset of $(0,\infty)$ and has a smallest element. Note that the size of the smallest element can be effectively computed from the form preserved by $O(L)$. For a finite multiset $T \subseteq Q$ (we allow repetitions in $T$) define the height $h(T)=h_v(T)$ to be 
\begin{equation}
h(T):=\sum_{H \in T}d(v,H)=\sum_{h \in T}d(v,w_H).
\end{equation}
It is clear from the properties of $D$ that 
$$\{h(T) \mid T \subseteq Q\ \wedge |T|<\infty\}\cap[0,x]$$ is finite for every $x>0$ and can be effectively bounded from above in terms of $x$. 

Let $T=\{H_1,\ldots,H_l\}\subseteq R$ be a multiset and let $G(T)$ denote the group generated by 
$t_1,\ldots,t_l$ where $t_j$ is the reflection in $H_j$. The algorithm inputs $R:=\{H_1,\ldots,H_l\}$ corresponding to the generators $r_1,\ldots,r_l$ of $\Phi$ and proceeds to replace 
$T$ with a multiset $\bar{T} \subseteq R$ by one of 3-steps, each of which reduces the height of $T$ and possibly $|T|$ while maintaining 
$G(T)=G(\bar{T})$. This process is repeated until no moves are possible and this happens after effectively bounded number of steps. 
When this process stops it outputs the fundamental cell for $\Phi$ containing $v$ as an interior point. \\ \\
{\bf Step 1 (Repetition):} If any of the $H_j$'s in $T$ are duplicated then remove one of them to $\bar{T}$. Clearly $h(\bar{T})<h(T)$, $|\bar{T}| < |T|$ and $G(T)=G(\bar{T})$.
\\ \\
{\bf Step 2 (Redundant faces):}
For every one of the hyperplanes $H_1,\ldots,H_l$ in $T$ let $H_j^-$ be the closed half space of $\mathbb{H}^n$ determined by $H_j$ and 
containing $v$. Let
\begin{equation}
K:=\cap_{1 \le j \le l} H_j^-
\end{equation}
which clearly contains $v$ as an interior point. Assume that some
$H_j$, say $H_1$, is redundant (this means $K=\cap_{2 \le j\le k}H_j^-$). There are two cases to consider. 

If $w_{H_1} \not \in P$ then there must be some $H_j$, call it $H_2$, which separates $w_{H_1}$ from $v$.  
But then 
$\dist(v,t_2(H_1))\le \dist(v,t_2(w_{H_1})) <\dist(v,w_{H_1})$. Hence, if we replace $H_1$ by $t_2(H_1)$ and $t_1$ by $t_2t_1t_2$
we get $\bar{T}:=\{t_2(H_1),H_2,\ldots,H_l\}$ with $|\bar{T}|=|T|$, $h(\bar{T})<h(T)$ and $G(\bar{T})=G(T)$.

If $w_{H_1} \in K$ then there must be some $H_j$, call it $H_2$, such that $H_2$ passes through $w_H$.
Let $M$ be the 2-dimensional 
plane containing $v$, $w_{H_1}$ and $w_{H_2}$. Note that $M$ is invariant under $t_1$ and $t_2$. Since
$H_2$ does not contain $v$, $H_2 \cap M$ does not contain 
the geodesic connecting $v$ to $w_{H_1}$. Thus, $H_2 \cap M$ is not orthogonal to $H_1 \cap M $ so $\dist(v,t_2(H_1))<\dist(v,H_1)$. Hence, if we replace $H_1$ by $t_2(H_1)$ and $t_1$ by $t_2t_1t_2$
we get $\bar{T}:=\{t_2(H_1),H_2,\ldots,H_l\}$ with $|\bar{T}|=|T|$, $h(\bar{T})<h(T)$ and $G(\bar{T})=G(T)$.
\\ \\
{\bf Step 3 (Dihedredal angels):}
Let $T$ and $K$ be as above. Suppose that two $(n-1)$-dimensional faces, say $H_1$ and $H_2$, meet and the dihedral angle $\alpha$ between them is not a submultiple of $\pi$. We can furthermore assume that $\dist(v,H_2) \le \dist(v,H_1)$.
Since $\Phi$ is discrete and $\alpha$ is not a submultiple of $\pi$, $\alpha=\frac{p}{q}\pi$ with $(p,q)=1$ and $p \ge 2$ .
Let $M$ be the 2-dimensional plane containing $v$, $w_{H_1}$,and $w_{H_2}$. One checks that
$M$ is invariant under $t_1$ and $t_2$ and the intersection 
$M \cap H \cap L$ is a point $p$. Moreover, the dihedral angle 
between the geodesics $h_1:=H_1 \cap M$ and $h_2:=H_2 \cap M$ is $\alpha$. Thus, we are reduced to the case of the hyperbolic plane
with reflections in $h_1$ and $h_2$ which meet at an angle $\alpha$.
We identify $t_1$ and $t_2$ with their restrictions to $M$
(both act as the identity on the $(n-2)$ dimensional plane passing through $p$ and orthogonal to $M$). Now, $g:=t_1t_2$ is a rotation
about $p$ by an angle of $2\alpha=\frac{2p}{q}\pi$. Thus, there exists an $m$  such that $\bar{g}=g^m$ is a rotation about $p$ with by an angle of $\frac{2}{q}\pi$. There are two cases to consider.

If $p$ is odd choose $s$ such that $\bar{h}:=\bar{g}^s(h_1)\cap M$ 
intersects the interior of $M \cap h_1^- \cap h_2^-$
and the angle between $\bar{h}$ and $h_2$ is $\frac{1}{q}\pi$
(where $h_i^-:=M \cap H_i^-$). 
Then $\dist(v,\bar{h})<\dist (v,h_1)$. Hence, if we replace $H_1$ by $\bar{g}^s(H_1)$ and $t_1$ by $\bar{g}^st_1\bar{g}^{-s}$
we get $\bar{T}:=\{\bar{g}^s(H_1),H_2,\ldots,H_l\}$ with $|\bar{T}|=|T|$, $h(\bar{T})<h(T)$ and $G(\bar{T})=G(T)$ (the last assertion is true since that group generated by the reflections in $h_1$ and $\bar{h}$ contains a rotation about $p$ by angle $\frac{2p}{q}\pi$ so it must be equal to the group generated by $t_1$ and $t_2$).

If $p$ is even then $q$ must be odd. Choose $s$ such that $\bar{h}:=\bar{g}^s(h_2)\cap M$ 
intersects the interior of $M \cap h_1^- \cap h_2^-$
and the angle between $\bar{h}$ and $h_2$ is $\frac{1}{q}\pi$. Then $\dist(v,\bar{H})<\dist (v,h_1)$. Hence, if we replace $H_1$ by $\bar{g}^s(H_2)$ and $t_1$ by $\bar{g}^st_2\bar{g}^{-s}$
we get $\bar{T}:=\{\bar{g}^s(H_2),H_2,\ldots,H_l\}$ with $|\bar{T}|=|T|$, $h(\bar{T})<h(T)$ and $G(\bar{T})=G(T)$.
\\ 

After repeating the algorithm a finite number of times, we arrive at a reduced $T=\{H_1,\ldots,H_l\}$ , namely one for which we cannot apply any of the above steps and we set $P_v:=\cap_{i=1}^l H_{t_j}^-$. The polyhedron $P_v$  contains $v$ and an interior point and since we can't apply step 2, $P_v \cap H_j$ is an $(n-1)$-dimensional face of $P_v$ for every $1 \le j \le l$ (and any $(n-1)$-dimensional face is of this form). Moreover, all the dihedral angles between pairs of the $(n-1)$-faces of $P_v$ are submultiplies 
of $\pi$ (including 0 as a submultiple when the faces aren't adjacent and the corresponding $H_j$'s don't intersect in $\mathbb{H}^n$).
It follows from Vinberg \cite{vin} that $P_v$ is a fundamental polyhedron 
for the group generated by reflections in the $(n-1)$-dimensional
faces of $P_v$ which is just the group $\Phi$. Moreover, this group is a Coxeter group with generators $t_1,\ldots,t_l$ and the usual relations are derived  from the corresponding Coxter matrix. This completes the general discussion of finding a fundamental domain for $\Phi$.

Returning to the group $G_u$ defined in Proposition \ref{prop fund dom of ref gp} which is generated by the reflections $r_1,\ldots,r_k$. Applying the general algorithm we find that $G_u=\langle t_1,\ldots, t_l\rangle$
with $l \le k$ and $t_1,\ldots,t_l$ reflections in the faces of a fundamental reflective polyhedron $P_v$. Using these canonical generators of $G_u$ we can decide if a given $z \in X_f$ is in the same connected component as $u$, that is whether $z \in G_u(u)$.
With $V_{-2}$ as a model for $\mathbb{H}^n$ it is a matter of reducing $z$ into $P_v$ and checking if this reduction is the same as the one for $u$ into $P_v$ (every orbit of $G_u$ has a unique representative on $P_v$, this is true also for orbits intersecting  the boundary of $P_v$). Now, if $z \not \in P_v$ then there is some $t_j$ such that $z \not \in H^-_{t_j}$, but then 
$\dist(v,t_j(z))<\dist(v,z)$. Of course $t_j(z)$ is the same $G_u$ orbit as $z$. Repeat this reduction to $t_j(z)$ if it is not in $P_v$ and continuing this for a finite number of steps, must place it in $P_v$ (by discreteness). If the point we arrive at is  the same as the reduction of $u$ into $P_v$ then $z \in \Sigma=G_u(u)$, otherwise 
$z \not \in \Sigma$.

\subsection{Thin Families}\label{thinguys}
Using the minimal distance graph and the results reviewed in Section~\ref{cartaninvol}, we apply our thinness certificate to various hyperbolic hypergeometric monodromy groups.  Recall that every $H=H(\alpha,\beta)$ is generated by two 
matrices $A,B \in \GL_{n}(\Z)$ for some odd $n\in \N$.  Moreover, there is an integer quadratic form $(u_1,u_2)={u_1}^tfu_2$ of signature $(n-1,1)$ preserved by $H$. The matrix $C:=A^{-1}B$ has an eigenvalue $-1$ of algebraic multiplicity $1$ with a corresponding eigenvector $v\in \Z^n$. The elements $v,Bv,\ldots,B^{n-1}v$ are linearly independent and the $\Z$-lattice $L$ they span is preserved by $H$. Proposition~\ref{prop form} implies we can normalize $f$ to have $(v,v)=-2$ and still be integral on $L$. The group $H$ is of infinite index in the integral orthogonal group of the original form if and only if its restriction to $L$ is of infinite index in the integral orthogonal group of the normalized form. Thus by restricting to $L$, we can assume from now that $f$ is normalized so $(v,v)=-2$ and $-C$ is a Cartan involution.   
If $B$ has finite order then the group 
$\langle B^{i}CB^{-i}\mid i \in \N\rangle$ is a finite index normal subgroup of $H$. Thus, $H$ is thin if and only if 
$$H_r:=\langle B^{i}(-C)B^{-i}\mid i \in \Z\rangle$$ is thin in $\Or_f(Z)$.
The advantage in considering $H_r$ is clear, this group is generated by Cartan involutions so we can apply Proposition~\ref{lemma min dist graph}.  We begin by noting the following.

\begin{lemma}\label{lemma v-Bv} If $v$ and $Bv$ belong to same component of $X_f$ then $H$ is thin. 
\end{lemma}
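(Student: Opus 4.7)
The plan is to apply the thinness certificate supplied by Proposition \ref{lemma min dist graph} to the reflection subgroup $H_r$. First I would dispose of the reduction from $H$ to $H_r$: by Theorem \ref{levthm}, $B$ is the companion matrix of a product of cyclotomic polynomials with distinct roots, so it is semisimple with roots of unity as eigenvalues and hence of finite order. Consequently $H_r$ is a finite-index (normal) subgroup of $H$, and thinness of $H$ is equivalent to thinness of $H_r$, exactly as announced in the paragraph preceding the lemma.

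The next step identifies the Cartan generators of $H_r$ with vertices of $X_f$. Under the normalization $(v,v)=-2$ of Proposition \ref{prop form}, the matrix $-C$ is the linear reflection $r_v$, since on $\langle v\rangle$ it acts as $-(-1)=1$ on the orthogonal complement and $-(-1)\cdot$ reverses $v$—more directly, $r_v$ and $-C$ have the same $\pm 1$-eigenspaces. For any $g\in \textrm{O}_f(\mathbb Z)$ we have the standard identity $g\,r_v\,g^{-1}=r_{gv}$. Applying this with $g=B^i$ yields
\begin{equation*}
B^i(-C)B^{-i}=r_{B^iv},
\end{equation*}
so $H_r=\langle r_{B^iv}\mid i\in\mathbb Z\rangle$, and each $B^iv$ is a vertex of $X_f$ by $B\in \textrm{O}(L)$ and Proposition \ref{prop form}(1).

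The heart of the argument is that all of the $B^iv$ lie in a single connected component. Because $B$ preserves both $L$ and $(\cdot,\cdot)$, it acts on $X_f$ as a graph automorphism, and therefore permutes the connected components. By hypothesis $v$ and $Bv$ lie in a common component $\Sigma$. Since $B(\Sigma)$ is a connected component containing $Bv$, we must have $B(\Sigma)=\Sigma$; iterating gives $B^iv\in\Sigma$ for every $i\in\mathbb Z$.

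With this, Proposition \ref{lemma min dist graph} applies directly to $H_r$: its image in $\textrm{O}(L)/R_k(L)$, where $k\in\{2,4\}$ according to the parity of $f$, has order at most $2$, hence is finite. Under the standing hypothesis of the certificate framework that $|\textrm{O}(L)/R_k(L)|=\infty$ — guaranteed by the Vinberg--Nikulin results recalled in Section \ref{cartan1} in all cases of interest — this forces $H_r$ to be of infinite index in $\textrm{O}(L)$, and therefore $H$ itself is thin. The only real work is the identification of the generators of $H_r$ with the orbit $\{B^iv\}$; everything else follows from the general machinery already set up, so there is no substantive obstacle beyond confirming that the hypothesis on $v,Bv$ can actually be verified in practice, which is carried out for specific families in the subsequent subsections.
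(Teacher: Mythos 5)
Your proof follows the paper's argument essentially verbatim: reduce to $H_r$ using the finite order of $B$, observe that $B$ preserves $f$ and hence acts on $X_f$ as a graph automorphism so that all the $B^iv$ lie in a single component, and then invoke Proposition~\ref{lemma min dist graph} together with the Vinberg--Nikulin infiniteness of $\textrm{O}(L)/R_k(L)$. (One cosmetic slip: since $C=r_v$ as a linear map, $-C$ equals $-r_v$ rather than $r_v$ --- their $\pm1$-eigenspaces are swapped, not equal --- but this sign is immaterial to the finiteness of the image in $\textrm{O}(L)/R_k(L)$, and the paper itself glosses over the same point.)
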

\begin{proof} The group $H_r$ is generated by the set of Cartan involutions $\{r_{B^iv}\mid i \in \Z\}$. Since $B$ preserves $f$, if 
$v$ and $Bv$ belong to the same connected component then also $B^iv$ and $B^{i+1}v$ belong to the same connected component for every $i \in \N$. Thus, $\{r_{B^iv}\mid i \in \Z\}$ is contained in one connected component so $H_r$ is thin by Proposition~\ref{lemma min dist graph}. Hence, $H$ is thin by the above paragraph. 
\end{proof}

We now prove the following.

\begin{thm}\label{thinfamilythm} The following subfamilies of hyperbolic hypergeometric monodromy groups are thin ($n$ is always odd):
\begin{itemize}
\item[(1)] $\mathcal N_1(1,n,n)$, $n>3$
$$\hspace{-1.7in} {\scriptstyle \alpha=(0,\frac{1}{n+1},\dots,\frac{n-1}{2(n+1)},\frac{n+3}{2(n+1)},\dots,\frac{n}{n+1}), \; \beta=(\frac{1}{2},\frac{1}{n},\frac{2}{n},\dots,\frac{n-1}{n})}$$
\item[(2)] $\mathcal M_1(1,n)$, $n>3$
$$\hspace{-.5in} {\scriptstyle \alpha=(0, \frac{1}{2n}, \frac{3}{2n}, \dots, \frac{n-2}{2n}, \frac{n+2}{2n},\dots, \frac{2n-3}{2n}, \frac{2n-1}{2n}), \; \beta=(\frac{1}{2}, \frac{1}{2n-2}, \frac{3}{2n-2}, \dots, \frac{2n-5}{2n-2}, \frac{2n-3}{2n-2})}$$
\item[(3)] $\mathcal N_1(1,1,n)$, $n>30$
$$\hspace{-1.3in} {\scriptstyle \alpha=(0, \frac{1}{2n}, \frac{3}{2n}, \dots, \frac{n-2}{2n}, \frac{n+2}{2n},\dots, \frac{2n-3}{2n}, \frac{2n-1}{2n}), \; \beta=(\frac{1}{2},\frac{1}{n},\frac{2}{n},\dots, \frac{n-1}{n})}$$
\item[(4)] $\mathcal M_2(n-2,n)$, $n>3$
$$\hspace{-1.3in} {\scriptstyle \alpha=(\frac{1}{2}, \frac{1}{2n-2}, \frac{3}{2n-2}, \dots, \frac{2n-5}{2n-2}, \frac{2n-3}{2n-2}), \; \beta=(0,0,0,\frac{1}{n-2},\frac{2}{n-2},\dots, \frac{n-3}{n-2})}$$
\item[(5)] $\mathcal M_2((n-1)/2,n)$, $n>30$, $n\not\equiv 1$ mod $4$
$${\scriptstyle \alpha=(\frac{1}{2}, \frac{1}{2n-2}, \frac{3}{2n-2}, \dots, \frac{2n-5}{2n-2}, \frac{2n-3}{2n-2}), \; \beta=(0,0,0,\frac{1}{n-1},\frac{2}{n-1},\dots, \frac{n-3}{2(n-1)},\frac{n+1}{2(n-1)},\dots,\frac{n-2}{n-1})}$$
\item[(6)] $\mathcal N_2(1,1,n)$, $n>30$
$$\hspace{-.4in} {\scriptstyle \alpha=(0, \frac{1}{2n-2}, \frac{3}{2n-2}, \dots, \frac{2n-3}{2n-2}), \; \beta=(\frac{1}{2}, \frac{1}{2},\frac{1}{2},\frac{1}{n-1},\frac{2}{n-1},\dots, \frac{n-3}{2(n-1)},\frac{n+1}{2(n-1)},\dots,\frac{n-2}{n-1})}$$
\item[(7)] $\mathcal N_2(n-1,1,n)$, $n>3$
$$\hspace{-.9in} {\scriptstyle \alpha=(0, \frac{1}{n}, \frac{2}{n}, \dots, \frac{n-1}{n}), \; \beta=(\frac{1}{2}, \frac{1}{2},\frac{1}{2},\frac{1}{n-1},\frac{2}{n-1},\dots, \frac{n-3}{2(n-1)},\frac{n+1}{2(n-1)},\dots,\frac{n-2}{n-1})}$$
\item[(8)] $\mathcal N_1(3,n,n)$, $n>3$ and $(n+1,3)=1$
$$\hspace{-1.4in} {\scriptstyle \alpha=(0,\frac{1}{n+1},\dots,\frac{n-1}{2(n+1)},\frac{n+3}{2(n+1)},\dots,\frac{n}{n+1}), \; \beta=(\frac{1}{2},\frac{1}{3},\frac{2}{3},\frac{1}{n-2},\dots,\frac{n-3}{n-2})}$$
\end{itemize}
\end{thm}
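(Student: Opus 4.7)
The overall plan is to realize every one of the eight families as satisfying the hypothesis of Lemma~\ref{lemma v-Bv}. By that lemma together with Proposition~\ref{lemma min dist graph} and the Vinberg/Nikulin results reviewed in Section~\ref{cartaninvol}, it suffices for each family to (a) exhibit a path in the minimal-distance graph $X_f$ from $v$ to $Bv$, and (b) confirm that the ambient quotient $\Or_f(\Z)/R_k(L)$ (with $k=2$ if $f$ is even, $k=4$ if $f$ is odd) is infinite. Step (b) reduces, via Nikulin's classification and the list preceding Table~\ref{table2}, to a check on the dimension $n$ and on the invariant factors of the lattice $L$ spanned by $v,Bv,\dots,B^{n-1}v$; in each family the lattice invariants can be computed from Proposition~\ref{prop form}(4) once the coefficients $a_i$ of $P(x)$ have been read off from $\alpha$.

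Families (1) and (8) are essentially free. For $\mathcal{N}_1(1,n,n)$, Corollary~\ref{corol form 1} gives $(v,Bv)=-3$, so $\{v,Bv\}$ is itself an edge of $X_f$ in the even regime. For $\mathcal{N}_1(3,n,n)$, Corollary~\ref{corol form 3} gives $(v,Bv)=-4$; since the form on this family turns out to be odd, $\{v,Bv\}$ is an edge of $X_f$ in the $k=4$ regime. In both cases Lemma~\ref{lemma v-Bv} applies at once provided the Nikulin condition holds, which for the stated dimension ranges follows either from the large-$n$ clause ($n\ge 15$) or from a direct comparison of the invariant factors of $L$ with Table~\ref{table2}.

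For the remaining families (2)--(7), one must work harder because $(v,Bv)$ is strictly less than $-3$ (resp.\ $-4$), so a bridge is required. The method is: first apply Proposition~\ref{prop form}(4) to write down the Gram matrix on $v,Bv,\dots,B^{n-1}v$ in closed form in terms of the cyclotomic data of the family; then search within the finite-rank sublattice spanned by $v, Bv, B^{2}v$ (and, when necessary, $B^{-1}v, B^{3}v$) for an integral vector $w$ of length $-2$ that is adjacent in $X_f$ to both $v$ and $Bv$. Such a $w$ supplies a length-two path $v\to w\to Bv$ and finishes the job. When no single bridge works uniformly in $n$, one iterates: build $v=w_0,w_1,\dots,w_t=Bv$ with each consecutive pair of length $-2$ and at minimal inner product. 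The effective algorithm of Section~\ref{thinguys}, producing a fundamental polyhedron for the Coxeter group $G_v$ generated by the bridging reflections, certifies that $w$ indeed connects $v$ to the $G_v$-orbit of $Bv$.

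The hard part is step (a) for families (2), (4), (5), (6), (7): one must produce the bridging vector(s) as an explicit, $n$-independent $\Z$-linear combination of $v,Bv,B^2v,\ldots$ and then verify by direct inner-product calculation that the required values $-2$ on the diagonal and $-3$ or $-4$ against $v$ and $Bv$ hold for every admissible $n$. This is where the binary-form accidents noted in the remark after Lemma~\ref{basic} (reciprocity of $x^2+3xy+y^2$ and $x^2+4xy+y^2$) are critical, since they are what allow a factorization of $r_vr_{Bv}$ through $R_k(L)$. A secondary difficulty is verifying the Nikulin condition in the low-dimensional families with only $n>3$ as hypothesis: here one cannot appeal to the $n\ge 15$ blanket statement and must compute the invariant factors of $L$ explicitly and check they lie outside the short exceptional list of Section~\ref{cartaninvol}. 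The dimension cutoffs $n>30$ imposed in (3), (5), (6) are precisely the thresholds above which this second difficulty evaporates.
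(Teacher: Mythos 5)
Your plan has the difficulty distribution essentially inverted, and the one place where genuine work is required is exactly where you declare victory without an argument. For family (8), $\mathcal N_1(3,n,n)$, Corollary~\ref{corol form 3} does give $(v,Bv)=-4$, but the form there is \emph{even} in the paper's sense, not odd: the Gram matrix has off-diagonal entries $-11$, so $(\,\cdot\,,\,\cdot\,)/2$ is not integral and the edge threshold in $X_f$ is $-3$, not $-4$. Hence $\{v,Bv\}$ is \emph{not} an edge, and this family is the only one of the eight that genuinely requires a path of length two. The paper's proof constructs this bridge explicitly: a norm-$2$ vector $u$ supported on the first $m$ basis vectors with a $6$-periodic coefficient pattern (Lemma~\ref{norm 2 vector}, proved by induction on $\lfloor n/6\rfloor$), and then $w=u+v_{n-1}$ or $w=u+v_{n-2}$ according to $n\bmod 6$, verified to satisfy $(w,w)=-2$ and inner product $-3$ with two consecutive $v_i$'s. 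None of this is ``essentially free,'' and your appeal to an odd-regime edge at $-4$ would fail.

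Conversely, for families (2)--(7) no bridging vector is needed at all: the content of the paper's proof in each of these cases is to read off the coefficients $A_i,B_i$, compute the eigenvector $\mathbf v$ of $C$ explicitly, and check that the last coordinate of $B\mathbf v$ gives $(v,Bv)=-3$ (families (2), (4), (7), even forms) or $(v,Bv)=-4$ together with oddness of the form (families (3), (5), (6)), so that $v$ and $Bv$ are already adjacent and Lemma~\ref{lemma v-Bv} applies directly. Your claim that $(v,Bv)$ is ``strictly less than $-3$ (resp.\ $-4$)'' in these cases is false, and the elaborate bridging/fundamental-polyhedron machinery you propose for them is not what is needed; what is needed, and what you omit, is the actual computation of $\mathbf v$ and of the $n$th entry of $B\mathbf v$ in each family. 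Your step (b) --- checking the Nikulin/Vinberg conditions via the invariant factors for $n\le 30$ and quoting the blanket results for large $n$ --- does match the paper.
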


Note that in the cases above where we require $n>30$ the form fixed by the group is odd, and we appeal to Vinberg \cite{vin} to derive thinness.  In the cases where we require $n>3$, the fixed form is even and we appeal to Nikulin \cite{nik} to conclude the group is thin by checking using a computer that none of the $n<30$ cases are two elementary or belong to the lists at the end of Section~\ref{cartan1}.

\begin{proof} 

\noindent (1): 

Corollary \ref{corol form 1} shows that $(v,Bv)=-3$ and we can apply Lemma \ref{lemma v-Bv}. 

\vspace{.1in}

\noindent (2):

In this case, we have that the generators of $H(\alpha,\beta)$ are as in (\ref{agens}) with
\begin{align*}
&A_i=0\;\;\; \mbox{for } 2\leq i\leq n-2\\
&A_i=1\;\;\; \mbox{otherwise}
\end{align*}
and
\begin{align*}
&B_n=-1\\
&B_i=(-1)^i\cdot 2\;\;\; \mbox{for } i<n.
\end{align*}
Therefore the eigenvector of $C=A^{-1}B$ with eigenvalue $1$ is thus $\mathbf v=(3,2,2,2,\dots,2,\minus 1,2)^t$.  Note that the order of $B$ is $2n$.

The quadratic form $f$ fixed by $A$ and $B$ in the basis $\{\mathbf v,B\mathbf v,\dots,B^{n-1}\mathbf v\}$ has $ij$th entry  $3$ if $|i-j|=1$, and by Lemma~\ref{basic} the basis vectors are thus in one connected component of $X_f$ since $e_i^tfe_{i+1}=3$ for all $1\leq i\leq n-1$.  To see this, note that the $n$th row of $B^m$ is of the form
$$(b_1 \;\; b_2 \;\;\cdots\;\; b_n)$$
where $b_i=0$ for $i\leq n-m-1$, $b_{n-m}=1$, and $b_i=2$ for $i>n-m$.  Therefore in particular the $n$th entry of $B\mathbf v$ is $3$ as desired.

\vspace{0.1in}

\noindent (3):

In this case the generators of $H(\alpha,\beta)$ are as in (\ref{agens}) with
\begin{align*}
&A_i=2\;\;\; \mbox{for } 1\leq i\leq n-1\\
&A_i=1\;\;\; \mbox{for } i=n
\end{align*}
and
\begin{align*}
&B_n=-1\\
&B_i=(-1)^i\cdot 2\;\;\; \mbox{for } i<n.
\end{align*}
The eigenvector of $C=A^{-1}B$ with eigenvalue $1$ is thus $\mathbf v=(4,0,4,0,\dots,4,0,2)^t$.  The order of $B$ is $2n$ as in case (i).

In this case the quadratic form $f$ fixed by $A$ and $B$ in the basis of Proposition~\ref{prop form} is odd.  This follows from the fact that every entry of $\mathbf v$ is even and thus the $n$th entry of $B^m\mathbf v$ is even for every $0\leq m\leq n-1$.  Furthermore, $f_{ij}=4$ if $|i-j|=1$ since the $n$th entry of $B\mathbf v$ is $4$ so $e_i^tfe_{i+1}=4$ for $1\leq i\leq n-1$.  Combined with the fact that $f$ is odd, Lemma~\ref{basic} gives us that all of the basis vectors are in the same connected component of $X_f$.

\vspace{0.1in}

\noindent (4):

In this case we have that the generators of $H(\alpha,\beta)$ are as in (\ref{agens}) with
\begin{align*}
&A_i=(-1)^i\;\;\; \mbox{for } i=2,n-2,n\\
&A_i=(-1)^i\cdot 2\;\;\; \mbox{for } i=1,n-1\\
&A_i=0\;\;\; \mbox{otherwise.}
\end{align*}
and
\begin{align*}
&B_i=0\;\;\; \mbox{for } 2\leq i\leq n-2\\
&B_i=1\;\;\; \mbox{otherwise}
\end{align*}
The eigenvector of $C=A^{-1}B$ with eigenvalue $1$ is thus $\mathbf v=(3,\minus1,0,\dots,0,1,\minus1,2)^t$.  Here we have $B$ is of order $2n-2$.

Note that if $f$ denotes the form fixed by $A$ and $B$ in the basis of Proposition~\ref{prop form} we get that $f_{ij}$ is $3$ if $|i-j|=1$, and Lemma~\ref{basic} implies that the basis vectors arein one connected component of $X_f$ since $e_i^tfe_{i+1}=3$ for all $1\leq i\leq n-1$.  To see this, note that the $n$th row of $B^m$ is of the form
$$(b_1 \;\; b_2 \;\;\cdots\;\; b_n)$$
where $b_i=0$ for $i\leq n-m-1$ and $b_{i}=i-n+m+1$ for $i\geq n-m$.  Therefore in particular the $n$th entry of $B\mathbf v$ is $3$ as desired.

\vspace{0.1in}

\noindent (5):

In this case we have that the generators of $H(\alpha,\beta)$ are as in (\ref{agens}) with
\begin{align*}
&A_i=-1\;\;\; \mbox{for } i=n\\
&A_i=(-1)^i\cdot 3\;\;\; \mbox{for } i=n-1,1\\
&A_i=(-1)^i\cdot 4\;\;\; \mbox{for } 2\leq i\leq n-2
\end{align*}
and
\begin{align*}
&B_i=0\;\;\; \mbox{for } 2\leq i\leq n-2\\
&B_i=1\;\;\; \mbox{otherwise}
\end{align*}

The eigenvector of $C=A^{-1}B$ with eigenvalue $1$ is thus $\mathbf v=(4,\minus4,4\minus4,\dots,4,\minus2,2)^t$ and $B$ is of order $2n-2$.

As in case (4), the quadratic form $f$ fixed by $A$ and $B$ is odd.  Again, this is because every entry of $\mathbf v$ is even and thus the $n$th entry of $B^m\mathbf v$ is even for $0\leq m\leq n-1$.  Moreover, we have that $f_{ij}=4$ for $|i-j|=1$ since the $n$th entry of $B\mathbf v$ is $4$ so $e_i^tfe_{i+1}=4$ for $1\leq i\leq n-1$.  Combined with the fact that $f$ is odd, Lemma~\ref{basic} gives us that all of the basis vectors are in one connected component of $X_f$.

\vspace{0.1in}

\noindent (6):

Here the generators of $H(\alpha,\beta)$ are as in (\ref{agens}) with
\begin{align*}
&A_n=1\\
&A_i= 3\;\;\; \mbox{for } i=1,n-1\\
&A_i= 4\;\;\; \mbox{for } 2\leq i\leq n-2
\end{align*}
and
\begin{align*}
&B_i=(-1)^i\;\;\; \mbox{for } i=1,n-1,n\\
&B_i=0\;\;\; \mbox{for } 2\leq i\leq n-2
\end{align*}
The eigenvector of $C=A^{-1}B$ with eigenvalue $1$ is thus $\mathbf v=(4,4,4,\dots,4,2,2)^t$ and $B$ is of order $2n-2$.

Again, the quadratic form $f$ fixed by $A$ and $B$ in the basis of Proposition~\ref{prop form} is even since every entry of $\mathbf v$ is even.  Moreover, we have that $f_{ij}=4$ for $|i-j|=1$ since the $n$th entry of $B\mathbf v$ is $4$ so $e_i^tfe_{i+1}=4$ for $1\leq i\leq n-1$.  Combined with the fact that $f$ is odd, Lemma~\ref{basic} gives us that all of the basis vectors are in one connected component of $X_f$.

\vspace{0.1in}

\noindent (7):

Here we have that the generators of $H(\alpha,\beta)$ are as in (\ref{agens}) with
\begin{align*}
&A_n=1\\
&A_i= 3\;\;\; \mbox{for } i=1,n-1\\
&A_i= 4\;\;\; \mbox{for } 2\leq i\leq n-2
\end{align*}
and
\begin{align*}
&B_n=-1\\
&B_i=0\;\;\; \mbox{for } 1\leq i\leq n-1\\
\end{align*}
The eigenvector of $C=A^{-1}B$ with eigenvalue $1$ is thus $\mathbf v=(3,4,4,4,\dots,4,3,2)^t$ and $B$ is of order $n$.

Note that if $f$ denotes the form fixed by $A$ and $B$ in the basis of Proposition~\ref{prop form} we get that $f_{ij}$ is $3$ if $|i-j|=1$, and so Lemma~\ref{basic} implies that the basis vectors are in one connected component of $X_f$w since $e_i^tfe_{i+1}=3$ for all $1\leq i\leq n-1$.   To see this, we note that the $n$th entry of $B\mathbf v$ is $3$.

\vspace{0.1in}

\noindent (8): 

The basic idea is to use Lemma \ref{lemma v-Bv} again, however, to show that $v$ and $Bv$ belong to same connected component is more complicated then in the previous cases (one has to consider paths of longer length). Recall that for 
every $n \ge 7$ with $\gcd(n+1,6)=2$ there exists a unique monodromy group $H$ in the family $\mathcal N_1(3,n,n)$. For $1 \le i\le n$ let $v_i:=B^{i-1}v$
be the $i$-th basis element of a basis for $L$. 
If $u \in L$ then $[u]$ denotes the coordinates vector of $u$ with
respect to the base $v_1,\cdots,v_{n}$ and $[u]_i$ is the
$i$-coordinate of $[u]$. Corollary~\ref{corol form 3} states that the quadratic form is given by:

$$f_{i,j}:=(v_i,v_j)=\left\{\begin{array}{ccc}
-2 &\text{if}& |i-j|=0 \\
-4 & \text{if}&|i-j|=1 \\
-8 & \text{if}& |i-j|=2 \text { or } |i-j|=n-1\\
-11 & \text{if}& |i-j|=3 \text { or }  |i-j|=n-2\\
-12 & \text{otherwise} \\
\end{array}\right.$$

\begin{lemma}\label{norm 2 vector} Denote $m:=n-3$
if $n\equiv 1 \ (\Mod 6)$ and $m:=n-5$ if $ n\equiv 3 \ (\Mod 6)$.
Define $u \in L$ by
$$[u]_i:=\left\{\begin{array}{cc}
1 & \text{ if } i\equiv 1 \ (\Mod 6) \text{ and } i \le m \\
-2 &\text{ if } i\equiv 2 \ (\Mod 6) \text{ and } i \le m \\
2 &\text{ if } i\equiv 3 \ (\Mod 6) \text{ and } i \le m \\
-1 &\text{ if } i\equiv 4 \ (\Mod 6) \text{ and } i \le m \\
0 & otherwise
\end{array}\right.$$
 Then, $(u,u)=2$.
\end{lemma}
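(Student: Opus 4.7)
The plan is to compute $(u,u) = \sum_{i,j} [u]_i[u]_j f_{i,j}$ directly, exploiting the block structure of the support of $u$. First I would observe that the non-zero coordinates of $u$ partition into $k$ disjoint blocks $B_l := \{6l-5,6l-4,6l-3,6l-2\}$ for $l = 1,\ldots,k$, each carrying the common coefficient pattern $c := (1,-2,2,-1)$. The hypothesis that $n$ is odd with $\gcd(n+1,3)=1$ forces $m \equiv 4 \pmod 6$ in both cases $n \equiv 1,3 \pmod 6$, so the blocks tile $\{1,\ldots,m\}$ exactly with step $6$ and $k = (m+2)/6$. Crucially, since $m \leq n-3$ one has $|i-j| < n-2$ throughout the support of $u$, so the exceptional wrap-around values of $f_{i,j}$ at distances $n-1$ and $n-2$ from Corollary \ref{corol form 3} play no role in the calculation.

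The within-block contribution would be handled first. For any single block, restricting the quadratic form to four consecutive basis vectors yields the $4\times 4$ Toeplitz matrix
$$F = \begin{pmatrix}-2&-4&-8&-11\\-4&-2&-4&-8\\-8&-4&-2&-4\\-11&-8&-4&-2\end{pmatrix},$$
and a direct multiplication gives $c^T F c = 2$. Thus the $k$ diagonal blocks together contribute $2k$.

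For the cross-block contribution I would exploit the cancellation $\sum c_i = 1-2+2-1 = 0$. Between distinct blocks $B_l$ and $B_{l'}$ every distance $|i-j|$ lies in $\{3,4,\ldots,m-1\}$, so $f_{i,j} = -12$ except when $|i-j| = 3$, where $f_{i,j} = -11$. Writing $f_{i,j} = -12 + (f_{i,j}+12)$, the constant part yields $-12\bigl(\sum_{B_l} c_i\bigr)\bigl(\sum_{B_{l'}} c_j\bigr) = 0$, so only the distance-$3$ corrections matter. Non-adjacent blocks have all distances $\geq 9$ and contribute nothing, whereas adjacent blocks contain the unique distance-$3$ pair $(6l-2,6l+1)$, contributing $c_{6l-2}\,c_{6l+1}\cdot 1 = -1$, doubled to $-2$ once the symmetric ordered pair is included. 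Summing over the $k-1$ adjacent block pairs gives $-2(k-1)$.

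Combining, $(u,u) = 2k - 2(k-1) = 2$, as claimed. The main conceptual point is recognising the cancellation forced by $\sum c_i = 0$; once this is in hand the computation collapses to the within-block Gram calculation and a single distance-$3$ contribution per adjacent block pair. The only bookkeeping that requires care is verifying $m \equiv 4 \pmod 6$ in both parities of $n \pmod 6$ and confirming that $m \leq n-3$ precludes any wrap-around distances from appearing within the support of $u$.
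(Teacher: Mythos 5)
Your computation is correct, and it rests on exactly the same two ingredients as the paper's argument: the single-block Gram value $c^TFc=2$ for $c=(1,-2,2,-1)$ (which is precisely the paper's base cases $n=7,9$, where $m=4$ and there is only one block), and the fact that the cross term between a block and the earlier coordinates reduces to $-2$ because the block coefficients sum to zero (killing the generic $-12$ entries) and the unique distance-$3$ pair between consecutive blocks picks up the correction $-11-(-12)=1$ weighted by $[u]_{6l+1}[u]_{6l-2}=-1$. The difference is purely organizational: the paper peels off the last block and inducts on $\lfloor n/6\rfloor$, using the translation-invariance of the form under $B$ and the agreement of the upper-left corner of the Gram matrix with that of the $(n-6)$-dimensional member of the family to recycle the induction hypothesis for both $(a,a)$ and $(b,b)$, whereas you sum over all pairs of blocks at once in closed form. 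Your version has the advantage of making the cancellation mechanism $\sum_i c_i=0$ explicit and of isolating the observation that non-adjacent blocks contribute nothing (which in the paper is absorbed into the single cross-term estimate); the paper's version avoids any global bookkeeping over pairs of blocks. Your checks that $m\equiv 4\pmod 6$ in both residue classes of $n$ and that $m-1\le n-4$ rules out the wrap-around values $f_{i,j}\in\{-8,-11\}$ at distances $n-1,n-2$ are the same verifications the paper needs implicitly, and they are carried out correctly.
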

\begin{proof} The proof in by induction on $\left\lfloor\frac{n}{6}\right\rfloor$ (the integral part of
$\frac{n}{6}$). The induction base is $n=7$ and  $n=9$ and it is a
direct computation. Assume $n \ge 10$ and that the induction
hypothesis was proven for integer smaller then
$l:=\left\lfloor\frac{n}{6}\right\rfloor$. Define $a,b \in L$ by
$$[a]_i:=\left\{\begin{array}{cc}
1 & \text{ if } i\equiv 1 \ (\Mod 6) \text{ and } i \le m-6 \\
-2 &\text{ if } i\equiv 2 \ (\Mod 6) \text{ and } i \le m-6 \\
2 &\text{ if } i\equiv 3 \ (\Mod 6) \text{ and } i \le m-6 \\
-1 &\text{ if } i\equiv 4 \ (\Mod 6) \text{ and } i \le m-6\\
0 & otherwise
\end{array}\right.$$
and
$$[b]_i:=\left\{\begin{array}{cc}
1 & \text{ if } i=m-3 \\
-2 &\text{ if } i=m-2 \\
2 &\text{ if } i=m-1 \\
-1 &\text{ if } i=m\\
0 & otherwise
\end{array}\right..$$
Note that if $\tilde{f}$ is the form corresponding to the $n-6$
dimensional group in the family $j=3$ then the upper left $m-6
\times m-6$ of $\tilde{f}$ is the same as the upper left $m-6 \times
m-6$ of $F$. Thus, by the induction hypothesis we get that $(a,a)=2$
and that $(b,b)=(B^{-6(l-1)}b,B^{-6(l-1)}b)=2$. The equality $u=a+b$
implies
$$(u,u)=(a,a)+(b,b)+2(b,a)=4+\sum_{i=m-3}^{m}\sum_{j=1}^{m-6}[u]_i [u]_jf_{i,j}=2$$
since $f_{m-6,m-3}=f_{m-3,m-6}=-11$ while all the other $f_{i,j}$ in
the above range equal $-12$.
\end{proof}

\begin{lemma}\label{lemma j=3 1} Assume that $n\equiv 1 \ (\Mod 6)$ and let $u$ and $m$ be
as in Lemma  \ref{norm 2 vector}.  Denote $w:=u+v_{n-1}$. Then
$(w,w)=-2$ while $(w,v_{n-1})=(w,v_n)=-3$.
\end{lemma}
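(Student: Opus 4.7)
The plan is to reduce the three claims to computing the two integer inner products $(u,v_{n-1})$ and $(u,v_n)$. Indeed, bilinearity together with $(u,u)=2$ from Lemma~\ref{norm 2 vector} and the values $(v_{n-1},v_{n-1})=-2$, $(v_{n-1},v_n)=-4$ read off from Corollary~\ref{corol form 3} give
\begin{align*}
(w,w)&=(u,u)+2(u,v_{n-1})+(v_{n-1},v_{n-1})=2(u,v_{n-1}),\\
(w,v_{n-1})&=(u,v_{n-1})+(v_{n-1},v_{n-1})=(u,v_{n-1})-2,\\
(w,v_n)&=(u,v_n)+(v_{n-1},v_n)=(u,v_n)-4.
\end{align*}
So the lemma is equivalent to showing $(u,v_{n-1})=-1$ and $(u,v_n)=1$.

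The computational engine is the observation from Corollary~\ref{corol form 3} that $f_{i,j}=-12$ unless $|i-j|\in\{0,1,2,3,n-2,n-1\}$. Writing $f_{i,j}=-12+\delta_{i,j}$, we get
$$(u,v_j)=-12\sum_{i=1}^{n}[u]_i\;+\;\sum_{i=1}^{n}[u]_i\,\delta_{i,j}.$$
I would next show $\sum_{i=1}^n [u]_i=0$: since $n\equiv 1\ (\Mod 6)$, we have $m=n-3\equiv 4\ (\Mod 6)$, and both the period-$6$ block $(1,-2,2,-1,0,0)$ and its length-$4$ truncation $(1,-2,2,-1)$ sum to zero. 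Thus only the correction sum survives and the computation is reduced to tabulating a handful of boundary terms.

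It remains, for each $j\in\{n-1,n\}$, to identify the indices $i$ within the support $\{1,\ldots,m\}$ of $u$ where $\delta_{i,j}\neq 0$, read off $[u]_i$ from the explicit definition, and sum the contributions $[u]_i\,\delta_{i,j}$. For $j=n-1$ the nonzero corrections occur at $i=n-3$ (distance $2$, $\delta=4$, $[u]_{n-3}=-1$), $i=n-4$ (distance $3$, $\delta=1$, $[u]_{n-4}=2$) and $i=1$ (distance $n-2$, $\delta=1$, $[u]_1=1$); the total is $-4+2+1=-1$, as required. For $j=n$ the relevant indices are $i=n-3$ (distance $3$, $\delta=1$), $i=2$ (distance $n-2$, $\delta=1$) and $i=1$ (distance $n-1$, $\delta=4$); the total is $-1-2+4=1$. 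Substituting into the identities of the first paragraph then yields $(w,w)=-2$ and $(w,v_{n-1})=(w,v_n)=-3$.

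The only obstacle is combinatorial bookkeeping: one must check that the ``near-$1$'' exceptional indices $\{1,2\}$ and the ``near-$j$'' exceptional indices $\{n-4,n-3,n-2,n-1,n\}$ do not overlap, and that every exceptional $i$ either lies in the support of $u$ with the correct coordinate or has $[u]_i=0$. The hypothesis $n\equiv 1\ (\Mod 6)$ with $n\ge 7$ (the starting point of the induction in Lemma~\ref{norm 2 vector}) keeps all these special indices distinct, so the tally is unambiguous.
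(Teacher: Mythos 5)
Your proposal is correct and follows essentially the same route as the paper: reduce the three claims by bilinearity to the two inner products $(u,v_{n-1})=-1$ and $(u,v_n)=1$, and evaluate these using the explicit entries of $f$ from Corollary~\ref{corol form 3} and the coordinates of $u$ from Lemma~\ref{norm 2 vector}. The only difference is presentational: you make explicit the step $\sum_i [u]_i=0$ (so that only the finitely many entries with $f_{i,j}\neq -12$ contribute), which the paper's direct evaluation of $\sum_{i=1}^m [u]_i f_{j,i}$ leaves implicit.
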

\begin{proof} Note that $f_{n-1,1}=-11$, $f_{n-1,m-1}=-11$ and
$f_{n-1,m}=-8$ while $f_{n-1,i}=-12$ for $2 \le i \le m-2$. Thus,
$$(u,v_{n-1})=\sum_{i=1}^m[u_i]f_{n-1,i}=-1,$$
$$(w,w)=(u,u)+(v_{n-1},v_{n-1})+2(v_{n-1},u)=2-2-2=-2$$
and
$$(w,v_{n-1})=(v_{n-1},v_{n-1})+(v_{n-1},u)=-1-2=-3.$$
Also, $f_{n,1}=-8$, $f_{n,2}=-11$ and $f_{n,m}=-11$ while
$f_{n-1,i}=-12$ for $3 \le i \le m-1$. Thus,
$$(u,v_{n})=\sum_{i=1}^m[u_i]f_{n,i}=1$$
and
$$(w,v_{n})=(v_{n},v_{n-1})+(v_{n-1},u)=-4+1=-3.$$
\end{proof}

\begin{lemma}\label{lemma j=3 2} Assume $n\equiv 5 \ (\Mod 6)$ and let $u$ and $m$ be
as in Lemma  \ref{norm 2 vector}.  Denote $w:=u+v_{n-2}$. Then
$(w,w)=-2$ while $(w,v_{n-2})=(w,v_{n-1})=-3$.
\end{lemma}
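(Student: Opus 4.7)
The plan is to mirror the argument of Lemma~\ref{lemma j=3 1}, with $v_{n-2}$ replacing $v_{n-1}$ as dictated by the shift of the parameter $m$. Under the hypothesis, Lemma~\ref{norm 2 vector} gives $m = n-5$, and $u$ is supported in positions $1 \le i \le m$ following the six-periodic pattern $(1,-2,2,-1,0,0)$. Since $m \equiv 4 \ (\Mod 6)$, the truncated block $(1,-2,2,-1)$ yields $\sum_{i=1}^{m}[u]_i = 0$. In particular $[u]_1 = 1$ and $[u]_m = -1$, so also $\sum_{i=1}^{m-1}[u]_i = 1$ and $\sum_{i=2}^{m}[u]_i = -1$.

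Next I would read off from Corollary~\ref{corol form 3} the non-generic Gram entries that appear in the two inner products of interest. For $1 \le i \le m$, the integer $|n-2-i|$ ranges over $\{3,4,\ldots,n-3\}$, so $f_{n-2,m} = -11$ and $f_{n-2,i} = -12$ for $1 \le i \le m-1$. Similarly $|n-1-i|$ ranges over $\{4,5,\ldots,n-2\}$, so $f_{n-1,1} = -11$ and $f_{n-1,i} = -12$ for $2 \le i \le m$. Applying these to the telescoped partial sums immediately gives
\[
(u,v_{n-2}) = -12\cdot 1 + (-11)(-1) = -1, \qquad (u,v_{n-1}) = -11\cdot 1 + (-12)(-1) = 1.
\]

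Finally, combining $(u,u) = 2$ from Lemma~\ref{norm 2 vector} with $(v_{n-2},v_{n-2}) = -2$ and $(v_{n-1},v_{n-2}) = -4$ from Corollary~\ref{corol form 3}, the three required identities drop out: $(w,w) = (u,u) + (v_{n-2},v_{n-2}) + 2(u,v_{n-2}) = 2 - 2 - 2 = -2$, then $(w,v_{n-2}) = (v_{n-2},v_{n-2}) + (u,v_{n-2}) = -2 + (-1) = -3$, and $(w,v_{n-1}) = (v_{n-1},v_{n-2}) + (u,v_{n-1}) = -4 + 1 = -3$. The only genuine step is verifying that the truncated sum of the six-periodic pattern vanishes and that only two Gram entries escape the generic value $-12$; beyond that the argument is the same bookkeeping as in Lemma~\ref{lemma j=3 1}.
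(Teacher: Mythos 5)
Your proposal is correct and follows essentially the same route as the paper: identify the two non-generic Gram entries $f_{n-2,m}=-11$ and $f_{n-1,1}=-11$ from Corollary~\ref{corol form 3}, compute $(u,v_{n-2})=-1$ and $(u,v_{n-1})=1$, and expand the bilinear form on $w=u+v_{n-2}$. The only addition is that you make explicit the vanishing of the truncated six-periodic sum of the coordinates of $u$, which the paper leaves implicit.
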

\begin{proof} Note that $f_{n-2,m}=-11$ and
while $f_{n-2,i}=-12$ for $1 \le i \le m-1$. Thus,
$$(u,v_{n-2})=\sum_{i=1}^m[u_i]f_{n-2,i}=-1,$$
$$(w,w)=(u,u)+(v_{n-2},v_{n-2})+2(v_{n-2},u)=2-2-2=-2$$
and
$$(w,v_{n-2})=(v_{n-2},v_{n-2})+(v_{n-2},u)=-1-2=-3.$$
Also, $f_{n-1,1}=-11$ while $f_{n-1,i}=-12$ for $2 \le i \le m$.
Thus,
$$(u,v_{n-1})=\sum_{i=1}^m[u_i]f_{n-1,i}=1$$
and
$$(w,v_{n-1})=(v_{n-1},v_{n-2})+(v_{n-2},u)=-4+1=-3.$$
\end{proof}
Lemma \ref{lemma j=3 1} and \ref{lemma j=3 2} show that
$v_{n-2}$ and $v_{n-1}$ are in the same connected component of $X_f$.
Thus, also $v=B^{3-n}v_{n-2}$ and $Bv=B^{3-n}v_{n-1}$ are in the same connected component and we can apply Lemma \ref{lemma v-Bv}. 

\end{proof}

By using the same kind of arguments 
one can show that more families are thin. However since the computations are very tedious, we will only state the result we were able to prove.

\begin{thm}\label{secondthin} The family $\mathcal N_1(j,n,n)$ where $n\equiv l$ (mod $2j$) is thin if one of the following conditions holds:
\begin{itemize}
\item[1.] $l=j$ or $l=j-2$.
\item[2.] $l \le \frac{j+1}{2}$ and $l+1$ divides $j-1$.
\item[3.] $l \ge \frac{2j-5}{2}$ and $2j-l-1$ divides $j-1$.
\end{itemize}
\end{thm}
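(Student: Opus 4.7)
The plan is to extend the argument used in case (8) of Theorem~\ref{thinfamilythm} (the family $\mathcal{N}_1(3,n,n)$) to general $j$. The overall strategy is unchanged: write down the Gram matrix $f_{i,k}=(v_i,v_{i+k})$ of the normalized form on the basis $v_1,\ldots,v_n$ with $v_i=B^{i-1}v$, construct an explicit integral vector $u\in L$ with $(u,u)=2$ (or $(u,u)=4$ when $f$ is odd), and then show that $w:=u+v_i$ for a suitable index $i$ satisfies $(w,w)=-2$ and $(w,v_i)=(w,v_{i+1})=-3$ (respectively $-4$). This yields an edge-path $v_i - w - v_{i+1}$ in the minimal distance graph $X_f$, and then $B$-invariance of $f$ propagates the conclusion to every consecutive pair of basis vectors. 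In particular $v=v_1$ and $Bv=v_2$ lie in one connected component of $X_f$, so Lemma~\ref{lemma v-Bv} combined with the Vinberg and Nikulin infiniteness results reviewed in Section~\ref{cartaninvol} gives thinness of $H$.

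First I would compute, analogously to Corollaries~\ref{corol form 1} and~\ref{corol form 3}, the bilinear form $(v_i,v_k)$ for a general $\mathcal{N}_1(j,n,n)$. The entries depend only on $|i-k|$ for ``interior'' indices and take a small number of exceptional values in the two ``boundary strips'' $|i-k|\le j-1$ and $|i-k|\ge n-j+1$, where the residue $l=n\bmod 2j$ controls precisely which exceptional values occur. This explicit form is what makes the three hypotheses in the theorem natural: Case~1 ($l=j$ or $j-2$) corresponds to the minimal boundary overlap, while Cases~2 and 3 are mirror images in which the divisibility $(l+1)\mid (j-1)$ or $(2j-l-1)\mid (j-1)$ is exactly the condition needed so that a periodic coefficient pattern can be fitted between the two boundary strips.

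For the construction of $u$ I would use a periodic pattern of coefficients $[u]_i$ of period $2(l+1)$ in Case~2 and $2(2j-l-1)$ in Case~3 (and a short tailored pattern in Case~1), modeled on the $6$-periodic vector constructed in Lemma~\ref{norm 2 vector}. The key computation, $(u,u)=2$ (resp.\ $4$), can be carried out by induction on the number of periods, exactly as in Lemma~\ref{norm 2 vector}: the contribution of an isolated block is independent of $n$ because the Gram matrix is translation-invariant away from the boundary, and the cross-terms between adjacent blocks telescope using the fact that only one of the entries $f_{i,k}$ in the ``interior'' range differs from the generic constant value. The divisibility hypothesis is used precisely to make the pattern close up with the correct parity of exceptional entries at the two boundary strips.

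Once $u$ is in hand, the verification that $w=u+v_i$ has norm $-2$ and satisfies $(w,v_i)=(w,v_{i\pm 1})=-3$ (or $-4$ in the odd case) is a direct calculation analogous to Lemmas~\ref{lemma j=3 1} and~\ref{lemma j=3 2}: by periodicity only finitely many pairings $(v_k,v_i)$ enter, and the divisibility condition guarantees the correct residue class of the weighted sum of $f_{i,k}$. The principal obstacle is the combinatorial case analysis — the three hypotheses split into many subcases depending on $j\bmod 6$, on parity of $l$, and on whether $f$ is even or odd (which decides whether Nikulin or Vinberg is invoked at the last step). The computations themselves are mechanical but voluminous, which is why only the cleanest hypotheses are recorded in the statement.
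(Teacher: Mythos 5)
Your proposal follows exactly the strategy the paper intends: the authors state that Theorem~\ref{secondthin} is obtained ``by using the same kind of arguments'' as case (8) of Theorem~\ref{thinfamilythm} (the $\mathcal{N}_1(3,n,n)$ analysis via Lemmas~\ref{norm 2 vector}, \ref{lemma j=3 1} and \ref{lemma j=3 2}), and they explicitly omit the computations as too tedious to record, so there is no written proof to diverge from. Be aware, however, that since neither you nor the paper exhibits the general Gram matrix for $\mathcal{N}_1(j,n,n)$, the explicit periodic vector $u$ of norm $2$ (or $4$), or the verification that the divisibility hypotheses make the pattern close up at the boundary strips, your text has the status of a plan rather than a checkable proof --- which is the same status as the paper's own one-sentence justification.
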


It is possible that one might be able to remove the congruence conditions above.  For example, the first case which is not covered by the theorem above is $\mathcal N_1(7,17,17)$, in which there is a path of length $4$ in $X_f$ that connects $v$ and $Bv$ (and hence the group is thin).

\section{Numerical results and data}\label{numeric}

\subsection{Hyperbolic groups $H(\alpha,\beta)$, dimension $n\leq 9$}\label{sporadic}

In this section, we list all of the primitive hyperbolic hypergeometric monodromy groups, including the sporadic ones which do not fall into any of the families found in Section~\ref{familyclassification}.  We note that the sporadic groups are all in dimension $n\leq 9$ and are easily determined using the method from Section~\ref{interlace}, given the sporadic finite hypergeometric monodromy groups listed in \cite{bh} (or by listing for each $n$ all the hyperbolic hypergeometric $(\alpha,\beta)$'s and recording which lie in one of our seven families).  Our list is split into two tables: Table~\ref{table3}, which lists all even monodromy groups in dimension $n\leq 9$ as well as which family (if any) they fit into, and Table~\ref{table4},  which lists all odd monodromy groups in dimension $n\leq 9$ along with the associated family.  In Table~\ref{table3}, we also list the invariant factors of the associated quadratic lattice and specify in which cases we are able to prove that the monodromy group is thin (thinness here is always confirmed using the minimal distance graph method described above, with the longest path encountered being of length $3$).  In Table~\ref{table4} we specify when the subgroup $H_r$ of $H(\alpha,\beta)$ is contained in $R_f$ -- note that Nikulin's results from \cite{nik} do not apply to the odd case and hence we cannot immediately deduce thinness in these lower dimensional odd cases (although with extra work this can probably be done).  The groups $H(\alpha,\beta)$ where $[H(\alpha,\beta):H_r]=\infty$ (thus, those for which the minimal distance graph method does not prove thinness) are marked with a $\ast$ in the last column. 

\vspace{0.3in}

\begin{center}
{\renewcommand{\arraystretch}{2.5}
}
\end{center}
\let\thefootnote\relax\footnote{$^7$ H.~Park \cite{park} has recently shown that all the $R_f$'s in Table~3 are thin except for the second entry, which is arithmetic.  Moreover, in that case $H_r$ is still contained in a thin reflection subgroup of $R_f$.  In particular, all the $H_r$'s in Table~3 are thin.}
\subsection{Some numerics for hyperbolic hypergeometric monodromy groups}\label{numeric1}

While our certificate for being thin succeeds in many cases as demonstrated in Sections~\ref{mindistgraph} and \ref{numeric}, there are families such as $\mathcal N_4(j,k,n)$ for which we found almost no paths between the basis vectors in the minimal distance graph.  If indeed the corresponding components of $X_f$ are singletons for members of this family, then clearly a variation of the minimal distance graph is needed if this approach is to work.  A crude, and it appears mostly reliable, test for the size of $H=H(\alpha,\beta)$ in $O(L)$ is to simply count the number of elements in $H$ in a large ball.  That is, let $T$ be a large number and let
\begin{equation}\label{inball}
N_H(T):=\{\gamma\in H\; |\; \mbox{trace}(\gamma^t\gamma)\leq T^2\}.
\end{equation}
\noindent If $H$ is arithmetic then it is known (\cite{LP}) that
\begin{equation}\label{asymptotic}
N_H(T)\sim\frac{c_n\, T^{n-2}}{\mbox{Vol}(H\backslash \textrm{O}_f(\mathbb R))}, \mbox{ as } T\rightarrow\infty.
\end{equation}
\noindent Thus to probe the size of $H$ we generate elements in $H$ using our defining generators and try to determine $N_H(T)$ for $T$ quite large.

The difficulty is that we don't know which elements of $\textrm{O}(L)$ are in $H$ and the above procedure (by going to large generations of elements in $H$ gotten from the generating set) only gives a lower bound for $N_H(T)$.  In any case we can compute in this way a lower bound for $\log(N_H(T))/\log T$ for $T$ large.  If this is essentially $n-2$ then this suggests that $H$ is arithmetic, while if this number is less than $n-2$ it suggests that $H$ is thin (if $H$ is geometrically finite then this quantity should be an approximation to the Hausdorff dimension of the limit set of $H$ (see for example \cite{LP}).  We have run this crude test for many of our hyperbolic hypergeometric monodromies and it gives the correct answer in the cases where we have a rigorous treatment.  For example, in Figures~\ref{arith1} and \ref{arith2} we give a plot of $\log(N_H(T))$ versus $\log T$ for the arithmetic examples $\alpha=(\frac{1}{3},\frac{1}{2},\frac{2}{3}), \beta=(0,\frac{1}{4},\frac{3}{4})$ and $\alpha=(\frac{1}{3},\frac{1}{2},\frac{2}{3}), \beta=(0,\frac{1}{6},\frac{5}{6})$.  The similar plots for four examples with $n=5$, and for which our certificate failed are given in Figures~\ref{num1}, \ref{num2}, \ref{num3}, and \ref{num4} below.  Based on these, it appears that these $H$'s are all thin.  Further such experimentation is consistent with Conjecture~\ref{allthinconj} in the Introduction and even that for $n\geq 5$, every hyperbolic hypergeometric monodromy group is thin.

\begin{figure}
\centering
\includegraphics[height=55mm]{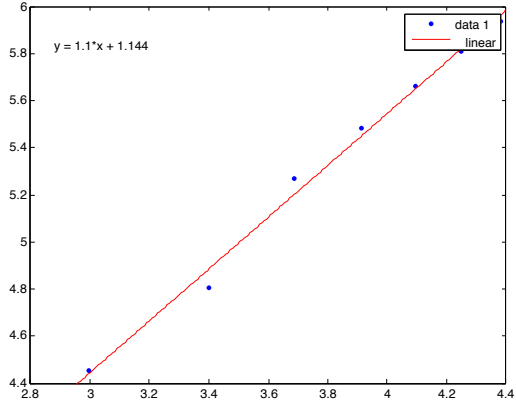}
\caption{$\log(N_H(T))$ versus $\log T$ graph for $H(\alpha,\beta)$ where $\alpha=(\frac{1}{3},\frac{1}{2},\frac{2}{3})$, $\beta=(0,\frac{1}{4},\frac{3}{4})$.}\label{arith1}
\end{figure}

\begin{figure}
\centering
\includegraphics[height=55mm]{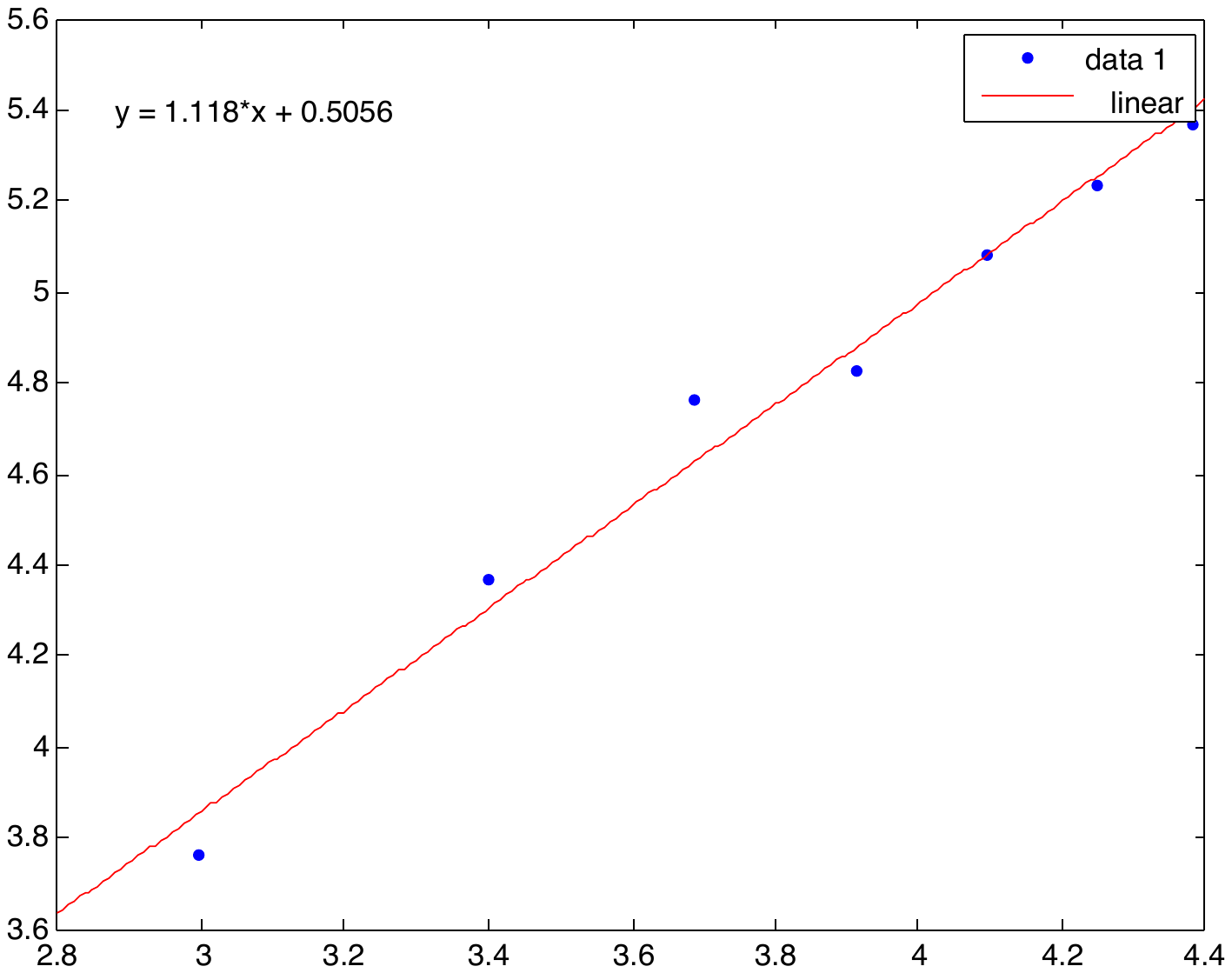}
\caption{$\log(N_H(T))$ versus $\log T$ graph for $H(\alpha,\beta)$ where $\alpha=(\frac{1}{3},\frac{1}{2},\frac{2}{3})$, $\beta=(0,\frac{1}{6},\frac{5}{6})$.}\label{arith2}
\end{figure}

\begin{figure}
\centering
\includegraphics[height=55mm]{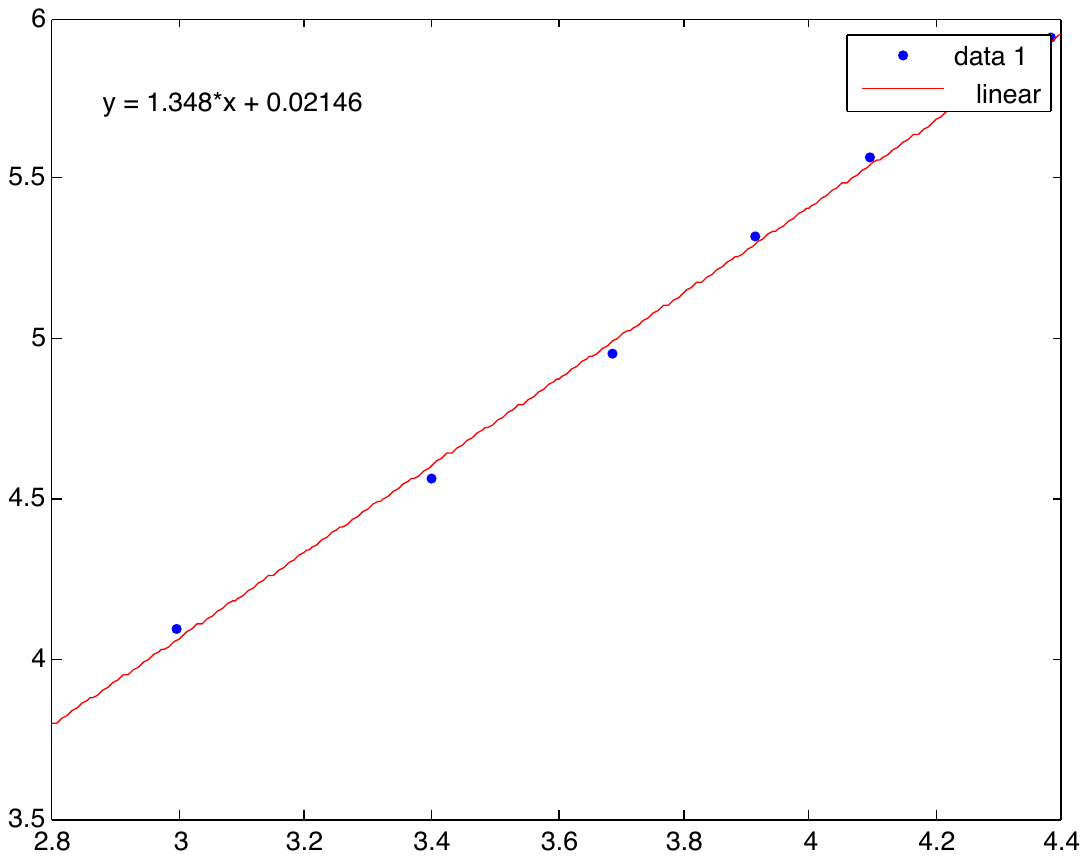}
\caption{$\log(N_H(T))$ versus $\log T$ graph for $H(\alpha,\beta)$ where $\alpha=(\frac{1}{5},\frac{2}{5},\frac{1}{2},\frac{3}{5},\frac{4}{5})$, $\beta=(0,0,0,\frac{1}{3},\frac{2}{3})$.}\label{num1}
\end{figure}
\begin{figure}
\centering
\includegraphics[height=55mm]{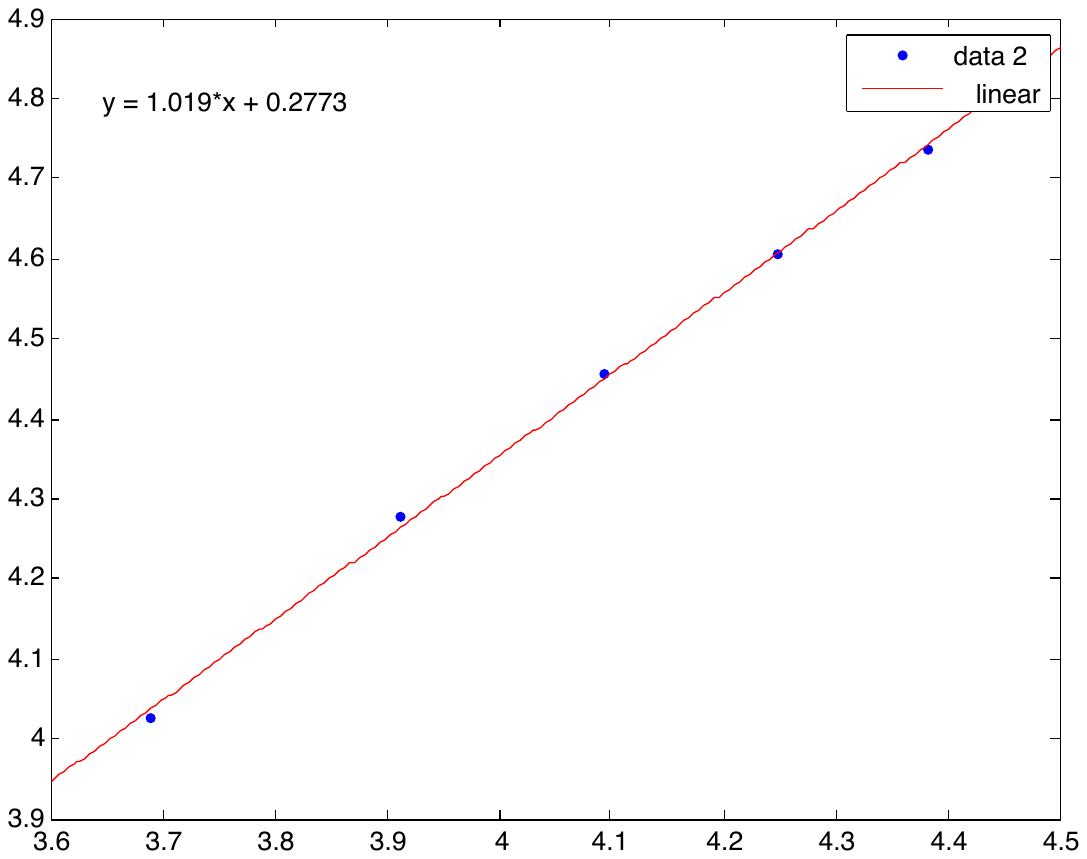}
\caption{$\log(N_H(T))$ versus $\log T$ graph for $H(\alpha,\beta)$ where $\alpha=(\frac{1}{8},\frac{3}{8},\frac{1}{2},\frac{5}{8},\frac{7}{8})$, $\beta=(0,0,0,\frac{1}{6},\frac{5}{6})$.}\label{num2}
\end{figure}
\begin{figure}
\centering
\includegraphics[height=55mm]{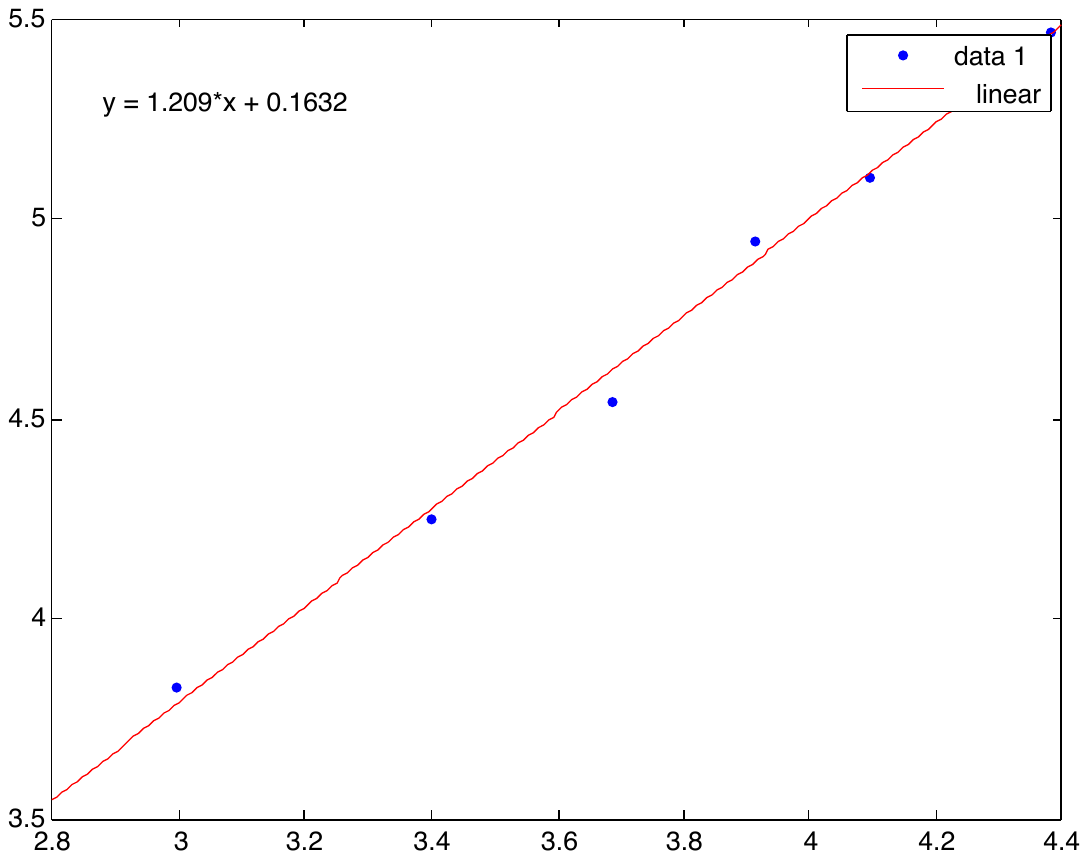}
\caption{$\log(N_H(T))$ versus $\log T$ graph for $H(\alpha,\beta)$ where $\alpha=(\frac{1}{6},\frac{1}{3},\frac{1}{2},\frac{2}{3},\frac{5}{6})$, $\beta=(0,0,0,\frac{1}{4},\frac{3}{4})$.}\label{num3}
\end{figure}
\begin{figure}
\centering
\includegraphics[height=55mm]{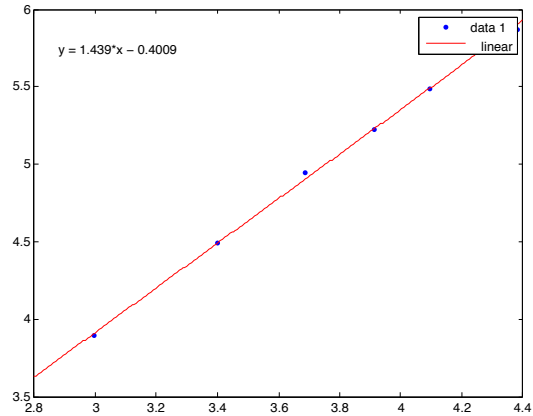}
\caption{$\log(N_H(T))$ versus $\log T$ graph for $H(\alpha,\beta)$ where $\alpha=(\frac{1}{6},\frac{1}{2},\frac{1}{2},\frac{1}{2},\frac{5}{6})$, $\beta=(0,0,0,\frac{1}{3},\frac{2}{3})$.}\label{num4}
\end{figure}

\vspace{3in}

\section*{Appendix}

In this appendix, we consider the six $3$-dimensional hyperbolic hypergeometric monodromy groups and show that they are all arithmetic.  Our method is to determine the preimage of the groups in the spin double cover $\textrm{SL}_2(\mathbb R)$ of $\textrm{SO}(2,1)$ and work in the setting of $\textrm{SL}_2$.  Once we pull back to $\textrm{SL}_2$, we rely on one of the following strategies.  If the quadratic form $f$ that is fixed by hyperbolic hypergeometric monodromy group $H$ is isotropic over $\mathbb Q$ then one can find $M\in \textrm{GL}_3(\mathbb Q)$ such that $M^tfM$ is some scalar multiple of $Q_2$ below.  Moreover, under such a change of variable, the preimage of $H\cap \textrm{SO}_f$ in the spin double cover of $\textrm{SO}_f$ can be made to sit inside $\textrm{SL}_2(\mathbb Z)$, and in this setting showing that the group is finite index is usually straightforward.  Out of the six $3$-dimensional hyperbolic hypergeometric monodromy groups, four fix an isotropic form $f$, and in those cases we show that the preimage of the group in the spin double cover of $\textrm{SO}_f$ contains a principle congruence subgroup\footnote[8]{$^8$In doing this, we use Sage to determine generators for $\Gamma(N)$ for various $N$.} of $\textrm{SL}_2(\mathbb Z).^8$

If the form $f$ fixed by $H$ is anisotropic, we again pull back to the spin double cover $\Gamma$ of $\textrm{SO}_f$ in $\textrm{SL}_2(\mathbb R)$ and, after generating several elements of $\Gamma$, construct a region in the upper half plane which contains a fundamental domain for $\Gamma$ acting on $\mathbb H$ (this method works for the isotropic case as well).  Specifically, a Dirichlet fundamental domain for $\Gamma$ is given by
$$\mathcal F_{p_0}=\bigcap_{\gamma\in \Gamma}\{z\in\mathbb H\; |\; d(z,p_0)\leq d(z,\gamma p_0)\}=\bigcap_{\gamma\in\Gamma} H(\gamma,p_0)$$
where $p_0$ is any base point and $H(\gamma,p_0)$ is a closed half space.  One can prove that $\mathcal F_{p_0}$ is compact (and hence $\Gamma$ is arithmetic) if the intersection $\cap_{\gamma} H(\gamma,p_0)$ is compact even when taken over a \emph{finite} number of elements $\gamma\in \Gamma$.  Note that this method uses the same idea as described in Section~\ref{numeric1}: generating elements of $\Gamma$ to say something about thinness, but in this case it can actually prove whether the group is thin or not.

Throughout this section, we let
$$Q_1=\left[\begin{array}{ccc}
1&0&0\\
0&1&0\\
0&0&\minus 1
\end{array}\right]\quad\mbox{ and }\quad
Q_2=\left[\begin{array}{ccc}
0&0&\minus\frac{1}{2}\\
0&1&0\\
\minus\frac{1}{2}&0&0
\end{array}\right].$$

For these two forms we have the following spin homomorphisms from $\textrm{SL}_2(\mathbb R)$ into the $\textrm{SO}_{Q_1}$ and $\textrm{SO}_{Q_2}$, respectively (see \cite{cas}).
\begin{equation}\label{spin1}
\begin{array}{lcl}
{\left(\begin{array}{ll}a&b\\ c&d\\ \end{array}\right)}&{\stackrel{\rho_1}{\longmapsto}}&{\left(\begin{array}{ccc}\frac{1}{2}(a^2+b^2-c^2+d^2)&ac-bd&\frac{1}{2}(a^2-b^2+c^2-d^2)\\ ab-cd&bc+ad&ab+cd\\ \frac{1}{2}(a^2+b^2-c^2-d^2)&ac+bd&\frac{1}{2}(a^2+b^2+c^2+d^2)\\ \end{array}\right)}\\
\end{array}
\end{equation}
\begin{equation}\label{spin2}
\begin{array}{lcl}
{\left(\begin{array}{ll}a&b\\ c&d\\ \end{array}\right)}&{\stackrel{\rho_2}{\longmapsto}}&{\left(\begin{array}{ccc}a^2&2ac&c^2\\ ab&ad+bc&cd\\ b^2&2bd&d^2\\ \end{array}\right)}\\
\end{array}
\end{equation}

\vspace{0.1in}

\noindent{\bf Example 1}: $\alpha=(\frac{1}{2},\frac{1}{2},\frac{1}{2}), \beta=(0,0,0)$.

In this case $H(\alpha,\beta)\subset \textrm{O}_f(\mathbb Z)$ with generators $A,B$ where

$$f=\left[\begin{array}{ccc}
1&0&\minus 3\\
0&1&0\\
\minus 3&0&1
\end{array}\right], A=\left[\begin{array}{ccc}
0&0&\minus 1\\
1&0&\minus 3\\
0&1&\minus 3
\end{array}\right], B=\left[\begin{array}{ccc}
0&0&1\\
1&0&\minus 3\\
0&1&3
\end{array}\right].$$

Note that $2(M^tfM)=Q_2$ where
$$M=\left[\begin{array}{ccc}
\minus 1/8&1/4&\minus 1/4\\
\minus 1/4&0&1/2\\
\minus 1/8&\minus 1/4&\minus 1/4
\end{array}\right]$$
and conjugation by $M$ gives an isomorphism of the subgroup $\langle A^2,B\rangle\subset H(\alpha,\beta)\cap \textrm{SO}_f(\mathbb Z)$ with the subgroup $\langle A',B'\rangle\subset \textrm{SO}_{Q_2}(\mathbb Z)$ where
$$A'=\left[\begin{array}{ccc}
1&8&16\\
0&1&4\\
0&0&1
\end{array}\right], B'=\left[\begin{array}{ccc}
1&0&0\\
1&1&0\\
1&2&1
\end{array}\right].$$

The preimage of this group in $\textrm{SL}_2(\mathbb Z)$ under the spin homomorphism in (\ref{spin2}) is the group $\langle \pm X,\pm Y\rangle$ where
$$X=\left[\begin{array}{cc}
1&0\\
4&1
\end{array}\right], Y=\left[\begin{array}{cc}
1&1\\
0&1
\end{array}\right].$$
This group contains the generators
$$\left[\begin{array}{cc}
1&4\\
0&1
\end{array}\right], \left[\begin{array}{cc}
\minus 15&4\\
\minus 4&1
\end{array}\right], \left[\begin{array}{cc}
5&\minus 4\\
4&\minus 3
\end{array}\right], \left[\begin{array}{cc}
9&\minus 16\\
4&\minus 7
\end{array}\right], \left[\begin{array}{cc}
13&\minus 36\\
4&\minus 11
\end{array}\right]$$
of $\Gamma(4)$. Hence $H(\alpha,\beta)$ is itself arithmetic.  In addition, our second strategy of finding a region which contains a fundamental domain for $H$ yields the compact region shown in Figure~\ref{fundy1}.

\vspace{0.1in}

\noindent{\bf Example 2}: $\alpha=(\frac{1}{3},\frac{1}{2},\frac{2}{3}), \beta=(0,0,0)$.

In this case $H(\alpha,\beta)\subset \textrm{O}_f(\mathbb Z)$ with generators $A,B$ where

$$f=\left[\begin{array}{ccc}
7&1&\minus 17\\
1&7&1\\
\minus 17&1&7
\end{array}\right], A=\left[\begin{array}{ccc}
0&0&\minus 1\\
1&0&\minus 2\\
0&1&\minus 2
\end{array}\right], B=\left[\begin{array}{ccc}
0&0&1\\
1&0&\minus 3\\
0&1&3
\end{array}\right].$$
Note that $(M^tfM)=3\cdot Q_2$ where
$$M=\left[\begin{array}{ccc}
\minus 1/4&0&1/12\\
\minus 1/2&1/2&1/12\\
1/4&\minus 1/2&1/3
\end{array}\right]$$
and conjugation by $M$ gives an isomorphism of the subgroup $\langle A^2,B\rangle\subset H(\alpha,\beta)\cap \textrm{SO}_f(\mathbb Z)$ with the subgroup $\langle A',B'\rangle\subset \textrm{SO}_{Q_2}(\mathbb Z)$ where
$$A'=\left[\begin{array}{ccc}
1&\minus 2&1\\
3&\minus 5&2\\
9&\minus 12&4
\end{array}\right], B'=\left[\begin{array}{ccc}
1&\minus 2&1\\
0&1&\minus 1\\
0&0&1
\end{array}\right].$$

The preimage of this group in $\textrm{SL}_2(\mathbb Z)$ under the spin homomorphism in (\ref{spin2}) is the group $\langle \pm X,\pm Y\rangle$ where
$$X=\left[\begin{array}{cc}
\minus 1&\minus 3\\
1&2
\end{array}\right], Y=\left[\begin{array}{cc}
1&0\\
\minus 1&1
\end{array}\right].$$
This group contains the generators
$$\left[\begin{array}{cc}
1&3\\
0&1
\end{array}\right], \left[\begin{array}{cc}
\minus 8&3\\
\minus 3&1
\end{array}\right], \left[\begin{array}{cc}
4&\minus 3\\
3&\minus 2
\end{array}\right]$$
of $\Gamma(3)$. Hence $H(\alpha,\beta)$ is itself arithmetic.  In addition, our second strategy of finding a region which contains a fundamental domain for $H$ yields the compact region shown in Figure~\ref{fundy2}.

\vspace{0.1in}

\noindent{\bf Example 3}: $\alpha=(\frac{1}{4},\frac{1}{2},\frac{3}{4}), \beta=(0,0,0)$.

In this case $H(\alpha,\beta)\subset \textrm{O}_f(\mathbb Z)$ with generators $A,B$ where

$$f=\left[\begin{array}{ccc}
3&1&\minus 5\\
1&3&1\\
\minus 5&1&3
\end{array}\right], A=\left[\begin{array}{ccc}
0&0&\minus 1\\
1&0&\minus 1\\
0&1&\minus 1
\end{array}\right], B=\left[\begin{array}{ccc}
0&0&1\\
1&0&\minus 3\\
0&1&3
\end{array}\right].$$
Note that $M^tfM=Q_1$ where
$$M=\left[\begin{array}{ccc}
1/4&1/4&1/2\\
0&1/2&0\\
\minus 1/4&1/4&1/2
\end{array}\right]$$
and conjugation by $M$ gives an isomorphism of the subgroup $\langle A^2,B\rangle\subset H(\alpha,\beta)\cap \textrm{SO}_f(\mathbb Z)$ with the subgroup $\langle A',B'\rangle\subset \textrm{SO}_{Q_1}(\mathbb Z)$ where
$$A'=\left[\begin{array}{ccc}
\minus 1&0&0\\
0&\minus 1&0\\
0&0&1
\end{array}\right], B'=\left[\begin{array}{ccc}
1&\minus 2&\minus 2\\
2&\minus 1&\minus 2\\
\minus 2&2&3
\end{array}\right].$$

The preimage of this group in $\textrm{SL}_2(\mathbb Z)$ under the spin homomorphism in (\ref{spin1}) is the group $\langle \pm X,\pm Y\rangle$ where
$$X=\left[\begin{array}{cc}
0&1\\
\minus 1&0
\end{array}\right], Y=\left[\begin{array}{cc}
0&1\\
\minus 1&2
\end{array}\right]$$
and $\pm XY, \pm YX$ generate the principal congruence subgroup $\Gamma(2)$. Hence $H(\alpha,\beta)$ is itself arithmetic.  In addition, our second strategy of finding a region which contains a fundamental domain for $H$ yields the compact region shown in Figure~\ref{fundy3}.

\vspace{0.1in}

\noindent{\bf Example 4}: $\alpha=(\frac{1}{6},\frac{1}{2},\frac{5}{6}), \beta=(0,0,0)$.

In this case $H(\alpha,\beta)\subset \textrm{O}_f(\mathbb Z)$ with generators $A,B$ where

$$f=\left[\begin{array}{ccc}
5&3&\minus 3\\
3&5&3\\
\minus 3&3&5
\end{array}\right], A=\left[\begin{array}{ccc}
0&0&\minus 1\\
1&0&0\\
0&1&0
\end{array}\right], B=\left[\begin{array}{ccc}
0&0&1\\
1&0&\minus 3\\
0&1&3
\end{array}\right].$$
Note that $M^tfM=Q_1$ where
$$M=\left[\begin{array}{ccc}
1/4&\minus 1/2&3/4\\
1/4&1/2&\minus 3/4\\
0&0&1/2
\end{array}\right]$$
and conjugation by $M$ gives an isomorphism of the subgroup $\langle A^2,B\rangle\subset H(\alpha,\beta)\cap \textrm{SO}_f(\mathbb Z)$ with the subgroup $\langle A',B'\rangle\subset \textrm{SO}_{Q_1}(\mathbb Z[\frac{1}{2})$ where
$$A'=\left[\begin{array}{ccc}
\minus 1/2&\minus 1&1/2\\
1&\minus 1&1\\
1/2&\minus 1&3/2
\end{array}\right], B'=\left[\begin{array}{ccc}
1/2&\minus 1&\minus 1/2\\
1&1&1\\
1/2&1&3/2
\end{array}\right].$$

The preimage of this group in $\textrm{SL}_2(\mathbb Z)$ under the spin homomorphism in (\ref{spin1}) is the group $\langle \pm X,\pm Y\rangle$ where
$$X=\left[\begin{array}{cc}
1&1\\
\minus 1&0
\end{array}\right], Y=\left[\begin{array}{cc}
1&1\\
0&1
\end{array}\right]$$
and $YX^4, Y$ are a well-known generating set for all of $\textrm{SL}_2(\mathbb Z)$. Hence $H(\alpha,\beta)$ is itself arithmetic.  In addition, our second strategy of finding a region which contains a fundamental domain for $H$ yields the compact region shown in Figure~\ref{fundy4}.

\vspace{0.1in}

\noindent{\bf Example 5}: $\alpha=(\frac{1}{3},\frac{1}{2},\frac{2}{3}), \beta=(0,\frac{1}{4},\frac{3}{4})$.

In this case $H(\alpha,\beta)\subset \textrm{O}_f(\mathbb Z)$ with generators $A,B$ where

$$f=\left[\begin{array}{ccc}
5&\minus 1&\minus 7\\
\minus 1&5&\minus 1\\
\minus 7&\minus 1&5
\end{array}\right], A=\left[\begin{array}{ccc}
0&0&\minus 1\\
1&0&\minus 2\\
0&1&\minus 2
\end{array}\right], B=\left[\begin{array}{ccc}
0&0&1\\
1&0&\minus 1\\
0&1&1
\end{array}\right].$$
\noindent In this case $f$ is anisotropic over $\mathbb Q$, and hence we turn to our strategy of approximating the fundamental domain to prove that $H$ is arithmetic: we produce the compact region shown in Figure~\ref{fundy5} which contains the fundamental domain of $H$.

\vspace{0.1in}

\noindent{\bf Example 6}: $\alpha=(\frac{1}{3},\frac{1}{2},\frac{2}{3}), \beta=(0,\frac{1}{6},\frac{5}{6})$.

In this case $H(\alpha,\beta)\subset \textrm{O}_f(\mathbb Z)$ with generators $A,B$ where

$$f=\left[\begin{array}{ccc}
1&0&\minus 2\\
0&1&0\\
\minus 2&0&1
\end{array}\right], A=\left[\begin{array}{ccc}
0&0&\minus 1\\
1&0&\minus 2\\
0&1&\minus 2
\end{array}\right], B=\left[\begin{array}{ccc}
0&0&1\\
1&0&\minus 2\\
0&1&2
\end{array}\right].$$
\noindent In this case $f$ is anisotropic over $\mathbb Q$, and hence we turn to our strategy of approximating the fundamental domain to prove that $H$ is arithmetic: we produce the compact region shown in Figure~\ref{fundy6} which contains the fundamental domain of $H$.

\begin{figure}
\centering
\includegraphics[height=33mm]{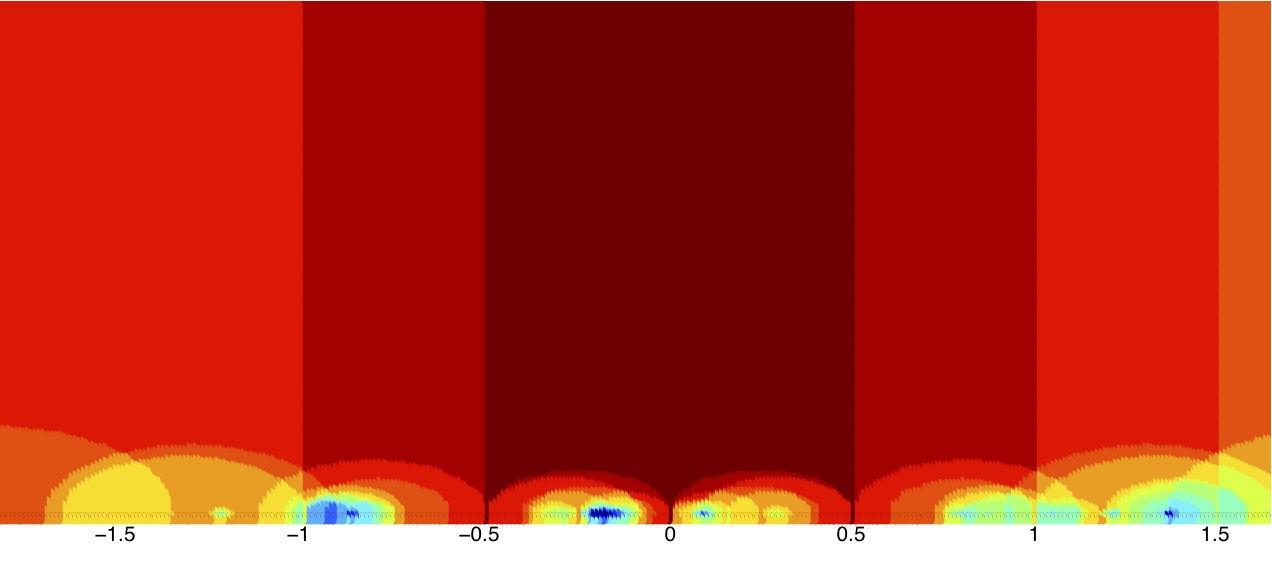}
\caption{Dark red region in upper half plane containing a fundamental domain of $H(\alpha,\beta)$ in Example 1.}\label{fundy1}
\end{figure}
\begin{figure}
\centering
\includegraphics[height=33mm]{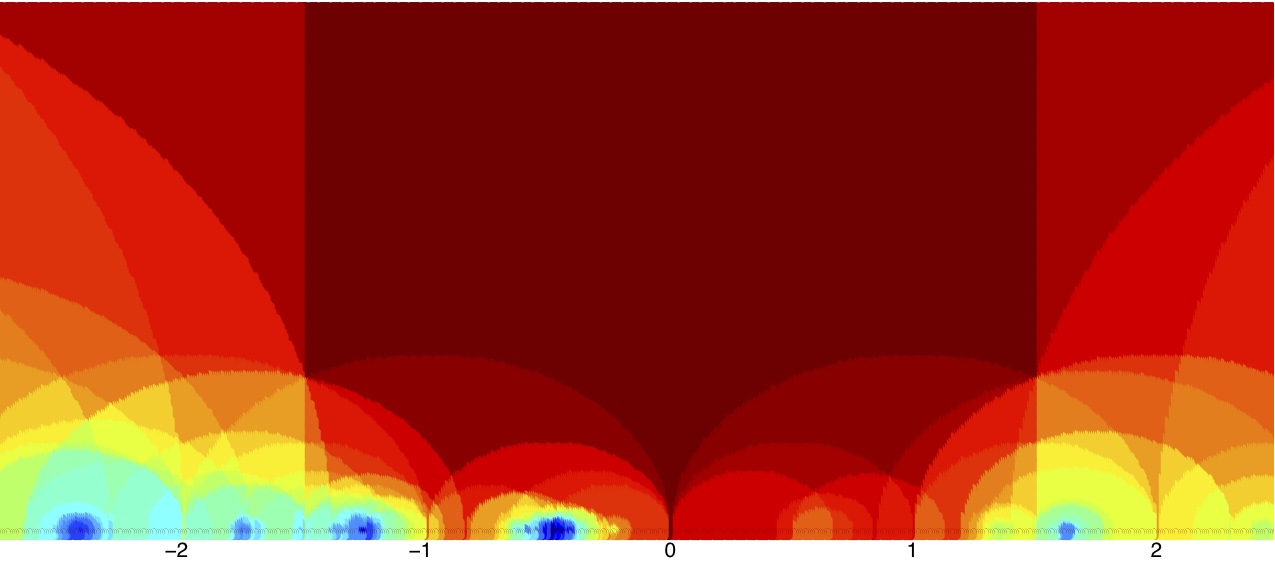}
\caption{Dark red region in upper half plane containing a fundamental domain of $H(\alpha,\beta)$ in Example 2.}\label{fundy2}
\end{figure}
\begin{figure}
\centering
\includegraphics[height=33mm]{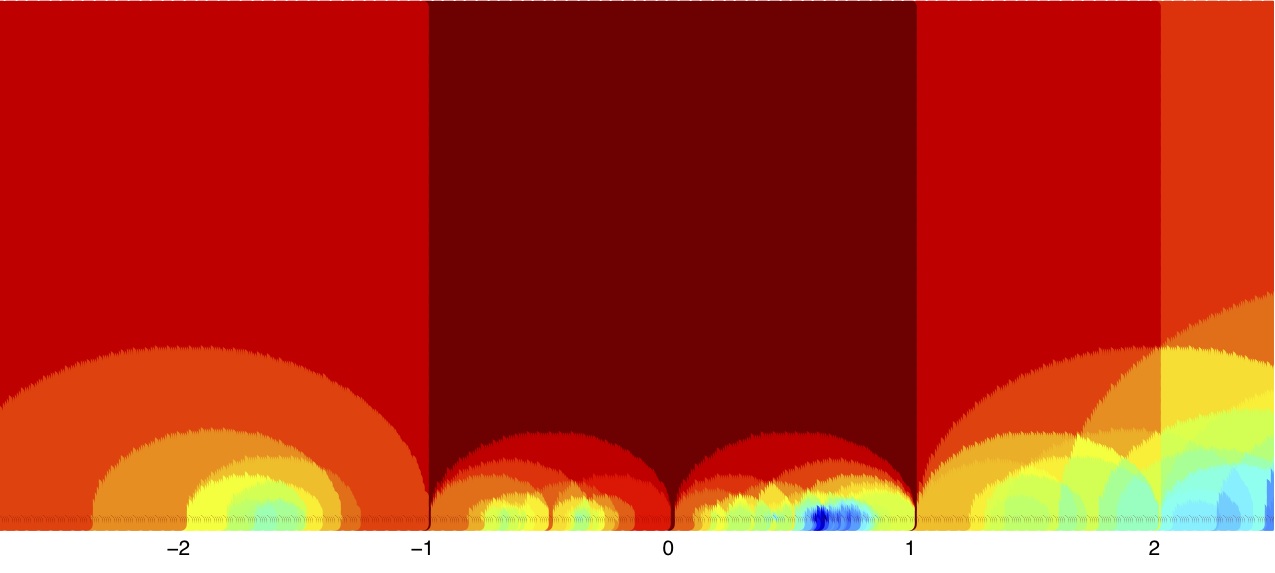}
\caption{Dark red region in upper half plane containing a fundamental domain of $H(\alpha,\beta)$ in Example 3.}\label{fundy3}
\end{figure}
\begin{figure}
\centering
\includegraphics[height=33mm]{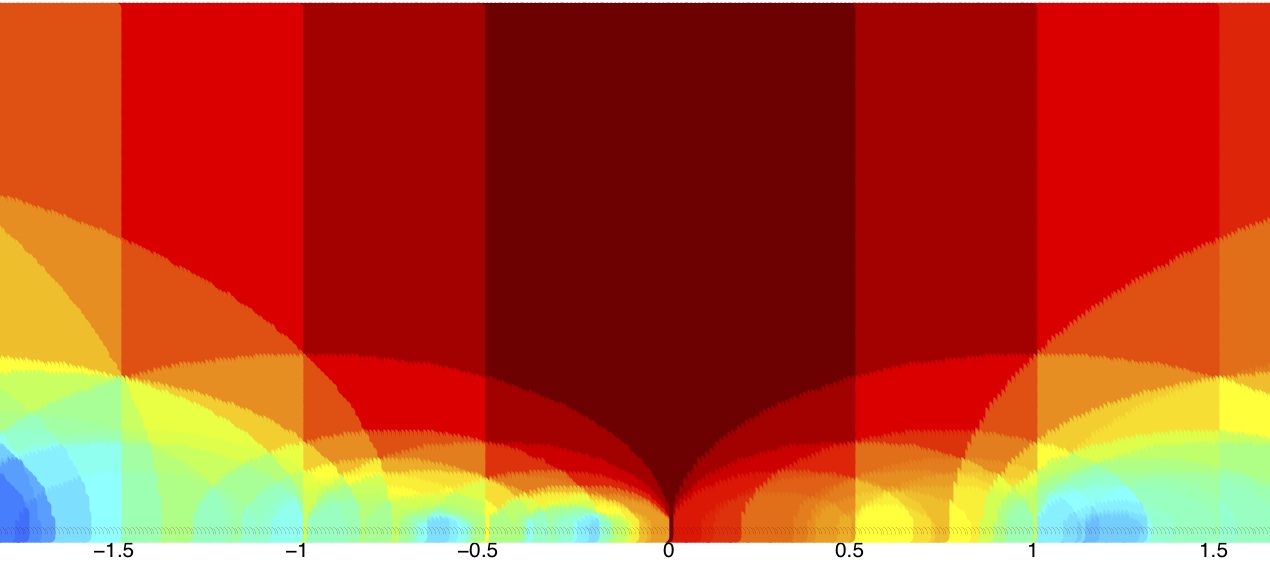}
\caption{Dark red region in upper half plane containing a fundamental domain of $H(\alpha,\beta)$ in Example 4.}\label{fundy4}
\end{figure}
\begin{figure}
\centering
\includegraphics[height=33mm]{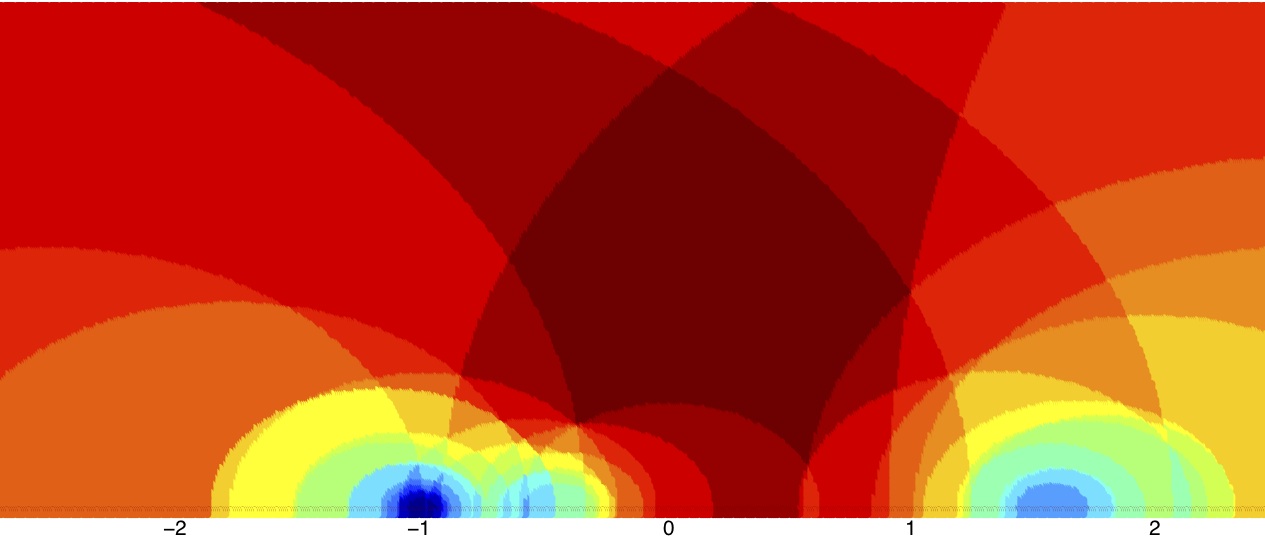}
\caption{Dark red region in upper half plane containing a fundamental domain of $H(\alpha,\beta)$ in Example 5.}\label{fundy5}
\end{figure}
\begin{figure}
\centering
\includegraphics[height=33mm]{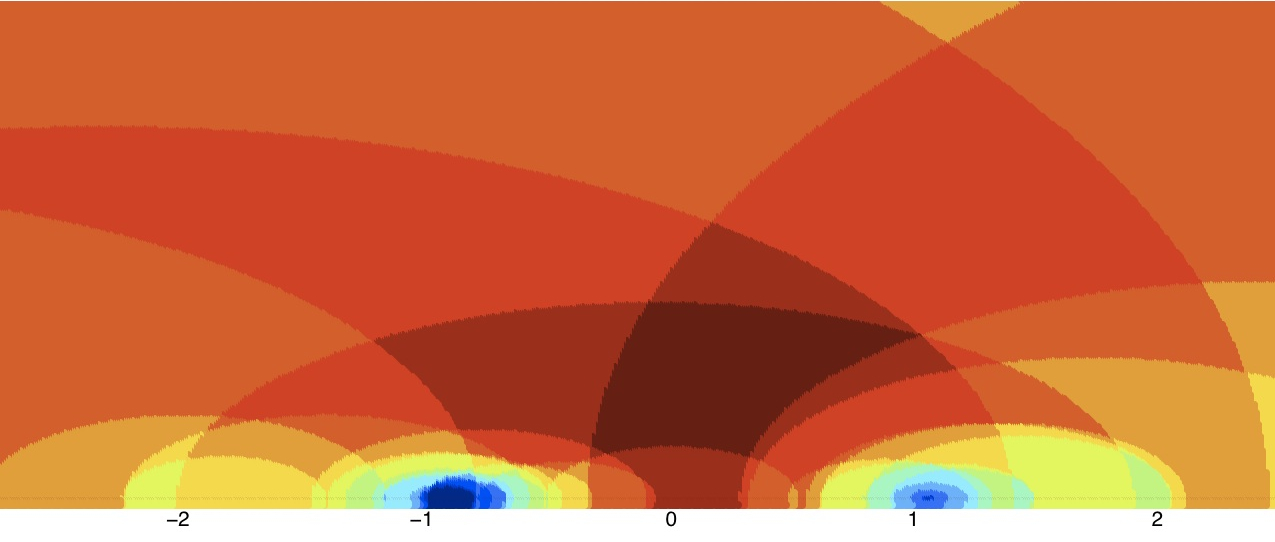}
\caption{Dark red region in upper half plane containing a fundamental domain of $H(\alpha,\beta)$ in Example 6.}\label{fundy6}
\end{figure}

\vspace{3in}

\noindent{\bf Acknowledgements}: We thank I.~Capdeboscq, N.~Katz, C.~McMullen, V.~Nikulin, I.~Rivin, T.~Venkataramana, and A.~Wienhard for helpful discussions related to this paper.

\end{document}